\documentclass[12pt,a4paper]{amsart} 
\usepackage[utf8]{inputenc} 
\usepackage[top=3cm, bottom=3.5cm, left=2.5cm, right=2.5cm]{geometry}
\usepackage{amsmath,amsthm,amsfonts,amstext,amssymb}
\usepackage{mathrsfs}
\usepackage{enumerate}
\usepackage{hyperref}
\usepackage{dsfont}
\usepackage{color}
\usepackage{graphicx}
\usepackage{xcolor}
\hypersetup{
    colorlinks,
    linkcolor={blue!50!black},
    citecolor={blue!50!black},
    urlcolor={blue!80!black}
}

\def\R{{\mathbb{R}}}
\def\N{{\mathbb{N}}}
\def\Z{{\mathbb{Z}}}

\newtheorem{theorem}{Theorem}[section]

\newtheorem{lemma}[theorem]{Lemma}

\theoremstyle{definition}

\newtheorem{remark}[theorem]{Remark}

\numberwithin{equation}{section}

\begin{document}

\title[Multi-hop visibility through the vacant set of Poissonian obstacles]{Multi-hop visibility through the vacant set of Poissonian obstacles}
\author{Yingxin Mu}
\address{
  Yingxin Mu,
  University of Leipzig, Institute of Mathematics,
  Augustusplatz 10, 04109 Leipzig, Germany.
}
\email{yingxin.mu@uni-leipzig.de}

\author{Artem Sapozhnikov}
\address{
  Artem Sapozhnikov,
  University of Leipzig, Institute of Mathematics,
  Augustusplatz 10, 04109 Leipzig, Germany.
}
\email{artem.sapozhnikov@math.uni-leipzig.de}

\begin{abstract}
We study multi-hop visibility inside the vacant set of three obstacle models in $\R^d$ with slow decay of spatial correlations and disparate obstacle geometries: Poisson-Boolean models with general i.i.d.\ radii distributions, Poisson cylinders and Brownian interlacements. 
For any $N\geq 0$, we obtain sharp bounds on the probability $P_{\mathrm{vis}}^N(r)$ of visibility to distance $r$ via $(N+1)$ hops in terms of the (explicit) probability of direct visibility to distance $r$ in a given direction, generalizing our earlier result from \cite{MS-Visibility-AIHP} for $N=0$. 
We observe a universal behavior of $P_{\mathrm{vis}}^N(r)$ in terms of two characteristic scales. 
In the three models of interest, these scales are generally the same, but with some anomalous exceptions in low dimensions. 
\end{abstract}

\maketitle

\section{Introduction}

Let $\mathcal C$ be a random closed subset of $\R^d$, viewed as an opaque obstacle. 
We are interested in the visibility through the vacant set of $\mathcal C$, defined as $\mathcal V=\R^d\setminus \mathcal C$. We say that $x$ is \emph{visible} from $y$ if the line segment $[x,y]$ does not intersect $\mathcal C$. In \cite{MS-Visibility-AIHP}, we studied the maximal visibility from the origin in three models of $\mathcal C$ with slow (algebraic) decay of spatial correlations: Poisson-Boolean models, Poisson cylinders and Brownian interlacements---Poisson ensembles of, respectively, balls, doubly infinite cylinders and doubly infinite Wiener sausages in $\R^d$---, and observed the following universal behavior of the probability $P_{\mathrm{vis}}(r)$ that some point at distance $r$ from the origin is visible from the origin,
\begin{equation}\label{eq:visibility-bounds}
c_1\big(\tfrac{r}{\delta_r}\big)^{d-1}f(r)\leq P_{\mathrm{vis}}(r)\leq 
c_2\big(\tfrac{r}{\delta_r}\big)^{d-1}f(r),
\end{equation}
where $f(r)$ is the (explicit) probability that a given point at distance $r$ from the origin is visible from the origin. The scaling function $\delta_r$ is a visibility window on $\partial B(0,r)$: in essence, the conditional probability that $y\in\partial B(0,r)$ is visible from $0$, given that some $x\in\partial B(0,r)$ is, is positive uniformly in $r$ when $\|x-y\|<\delta_r$, and smaller than $\exp(-c\frac{1}{\delta_r}\|x-y\|)$ when $\|x-y\|>\delta_r$. In \cite{MS-Visibility-AIHP}, we have computed $\delta_r$ for the three models: 
\begin{itemize}\itemsep4pt
\item[(a)]
$\delta_r = \frac1r$ for the Poisson-Boolean models in $d\geq 2$, the Poisson cylinders in $d\geq 3$ and the Brownian interlacements in $d\geq 4$, 
\item[(b)]
$\delta_r = \frac{(\log r)^2}{r}$ for the Brownian interlacements in $d=3$, 
\item[(c)]
$\delta_r = 1$ for the Poisson cylinders in $d=2$.
\end{itemize}
Curiously, the scale $\delta_r$ does not depend on the decay rate of spatial correlations (for example, it is the same for all Poisson-Boolean models) and is universal in high dimensions within this class of models, but it does depend on the geometric nature of the obstacle set in low dimensions. The main result of this paper further corroborates this behavior.

\subsection{Main result}
In this paper, we are interested in the multi-hop visibility to a large distance. 
We say that $\partial B(r)$ is visible from the origin via $N$ points if there exist $o_1,\ldots, o_N\in B(r)$ and $o_{N+1}\in\partial B(r)$ such that $o_1$ is visible from the origin and $o_{i+1}$ is visible from $o_i$ for all $1\leq i\leq N$; and we denote the probability of this event by $P_{\mathrm{vis}}^N(r)$. In particular, the direct visibility to distance $r$ corresponds to the special case $N=0$.
\begin{theorem}\label{thm:main}
Let $\mathcal C$ be the occupied set of a Poisson-Boolean model in dimension $d\geq 2$, the Poisson cylinders in dimension $d\geq 3$ or the Brownian interlacements in dimension $d\geq 3$ (see Section~\ref{sec:notation+models} for the precise definitions). Let $\delta_r$ be as above and let $h_r=\sqrt{r\log r}$ for the Brownian interlacements in dimension $d=3$ and, otherwise, $h_r=\sqrt r$.
For any $N\geq 0$, there exist $c = c(d,N,\mathrm{law}(\mathcal C))$ and $C = C(d,N,\mathrm{law}(\mathcal C))$ such that 
\begin{equation}\label{eq:main}
c\,\Big(\frac{r}{\delta_r}\Big)^{d-1}\Big(\frac{h_r}{\delta_r}\Big)^{dN} f(r) 
\leq P_{\mathrm{vis}}^N(r)\leq 
C\,\Big(\frac{r}{\delta_r}\Big)^{d-1}\Big(\frac{h_r}{\delta_r}\Big)^{dN} f(r).
\end{equation}
\end{theorem}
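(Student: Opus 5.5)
The plan is an induction on $N$, with the case $N=0$ being \eqref{eq:visibility-bounds} from \cite{MS-Visibility-AIHP}. The heuristic behind the extra factor $(h_r/\delta_r)^{d}$ per hop is the following. Visibility to distance $r$ with $N+1$ hops costs essentially the same as direct visibility along a straight line of length $\approx r$, since $f$ is (up to constants) multiplicative along the total length of the visible tubes. The gain comes from entropy. A path made of $N+1$ visible segments from $0$ to $\partial B(r)$ whose total length exceeds $r$ by at most $O(1)$ can have its intermediate points $o_i$ deviate from the straight line by at most $O(h_r)$ transversally and by $O(r)$ longitudinally. The condition on the length increase is what makes $f$ only change by a constant factor; it reads $r\cdot(\text{angle})^2\lesssim 1$, i.e.\ transversal deviation $\lesssim\sqrt r$, and $\sqrt{r\log r}$ for Brownian interlacements in $d=3$ because of the logarithmic correction there. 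The deviations must be counted in units of the visibility window $\delta_r$. This gives $(h_r/\delta_r)^{d-1}$ transversal choices and roughly $h_r/\delta_r$ effectively distinct longitudinal choices per point, hence $(h_r/\delta_r)^{d}$ per intermediate point.

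\textbf{Lower bound.} I fix a direction and use a second moment argument. I discretize: choose $o_1,\dots,o_N$ on a lattice of mesh $\delta_r$ inside boxes of transversal size $h_r$ around the segment $[0,r e_1]$, spaced longitudinally at distances of order $r/(N+1)$, and choose the endpoint in a $\delta_r$-net of a patch of $\partial B(r)$. Let $X$ be the number of such tuples for which every consecutive pair of points is visible. By the explicit formula for the vacant probability of a union of segments (the Poisson structure gives $P(\cup[o_i,o_{i+1}]\subset\mathcal V)=\exp(-\mathrm{cap}$-type functional$)$), together with the geometric bound that this functional is at most the one for a straight segment of length $r$ plus $O(1)$, I get $E X\gtrsim (r/\delta_r)^{d-1}(h_r/\delta_r)^{dN} f(r)$. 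For the second moment, I need $E[X^2]\lesssim (EX)^2/P$-scale, i.e.\ $E X^2\le C\, EX\cdot \sup_{\text{tuple}} E[X\mid \text{tuple visible}]$, with the conditional expectation bounded by a constant times $EX/P(\text{some tuple})$. This uses the decorrelation estimates from \cite{MS-Visibility-AIHP}: two polygonal lines that are at distance $\gg\delta_r$ apart have the functional of their union close to the sum, up to a term summable over the grid. Then Paley--Zygmund yields the lower bound.

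\textbf{Upper bound.} I apply a union bound over a $\delta_r$-net. Any successful configuration can be approximated: if $o_i$ is visible from $o_{i-1}$, then I can move points to the nearest net point while keeping ``near visibility'', using the window property, i.e.\ slightly thinned obstacles or enlarged tubes, at constant cost. For each discretized tuple, the probability that all segments are vacant is $\exp(-\Phi(\text{polyline}))$. I need a lower bound on $\Phi$ of the form $\Phi\ge \Phi(\text{segment of length } r)+c\,\sum_i \mathrm{dev}_i^2/h_r^2 + c\,(\text{excess length})$, which makes the sum over tuples of deviations beyond $h_r$ converge. I also have to handle short hops and points near the origin, where the tubes of consecutive segments overlap. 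The overlap reduces $\Phi$ by at most a constant per vertex, but hops shorter than $\delta_r$ collapse to the same net points, so they should be merged, and the induction on $N$ absorbs degenerate tuples with fewer effective hops, which are of lower order.

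The main obstacle is the upper bound's uniform lower estimate on $\Phi$ for arbitrary polylines. This is especially delicate for Brownian interlacements in $d=3$ and Poisson cylinders, where $\Phi$ is nonlocal, and a bent path's cost involves capacity or cylinder-hitting measures with long-range interactions between segments. I would first prove it for Poisson-Boolean models via the explicit integral $\int P(\text{ball of radius }R \text{ hits polyline})$. I would then transfer the argument using the hitting-measure representations of cylinders and interlacements from Section~\ref{sec:notation+models}, comparing the polyline with its straightened version segment by segment.
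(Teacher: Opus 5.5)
There is a genuine gap in each half of your argument, and both come from the same missing idea: the visibility window of an intermediate point is anisotropic.

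\textbf{Upper bound.} Your heuristic count is correct, but your upper bound cannot produce it. A union bound over an isotropic $\delta_r$-net has $\asymp r h_r^{d-1}/\delta_r^{d}$ candidates for each $o_i$ within distance $O(h_r)$ of $\R e_1$. A positive fraction of the resulting tuples (e.g.\ those in $\mathcal L$) already have point-visibility probability $\ge cf(r)$ by Lemma~\ref{l:lower-bound-1}. So your bound is at least of order $(r/\delta_r)^{d-1}(rh_r^{d-1}/\delta_r^{d})^N f(r)$, which is too large by $(r/h_r)^N$. No lower estimate on $\Phi$ can repair this, because the excess comes from tuples whose deviation is only $O(h_r)$.

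The ``$h_r/\delta_r$ effectively distinct longitudinal positions'' have to be built into the covering. The window of an intermediate point is a cylinder of radius $\delta_r$ and length $\beta_r=r\delta_r/h_r$ along $e_1$. You then need a lemma saying that visibility between such cylinders costs only a factor $e^{O(\kappa+1)}$ more than visibility between points. The paper proves this in two steps:
\begin{itemize}
\item Every segment from $C_{i-1}$ to $C_i$ crosses the balls $B(p_i,q_i)$ and $B(p_i',q_i)$ at unit distance from the ends of $[o_{i-1},o_i]$, with $q_i\le 10(\kappa+1)\delta_{|L_i|}$ (Lemma~\ref{l:qi-properties}). This is exactly where $\beta_rh_r=r\delta_r$ enters: a shift by $\beta_r$ along $e_1$ moves $o_i$ off the line $L_i^\infty$ by only $O((\kappa+1)\delta_{|L_i|})$.
\item Shrinking the obstacles from radius $\rho$ to $\rho-q_i$ changes the volume/capacity functional by $O(\kappa+1)$ (Lemma~\ref{l:upper-bound-1}).
\end{itemize}
Your ``move to the nearest net point at constant cost'' only handles displacements of size $\delta_r$, so it does not give this.

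Separately, the quadratic deviation bound you ask for is more than is known for interlacements in $d=3$. The paper proves only $c\kappa(\kappa\wedge\log\log r)$, which suffices. It does so via the variational formula for capacity with a dilating push-forward map, not a segment-by-segment comparison.

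\textbf{Lower bound.} Your second moment has the same blind spot and lacks its key input. What is needed is
$\mathsf P[L'\subseteq\mathcal V\mid L\subseteq\mathcal V]\le C\exp(-c\min(d_H(L,L'),1)/\delta_r)$ for polylines (Lemma~\ref{l:lower-bound-2}). This has to be proved anew, since the $N=0$ paper only treats single segments from the origin. ``The functional of the union is close to the sum'' is not the mechanism: $L$ and $L'$ share the origin and nearly coincide over most of their length. What matters is that the extra cost grows linearly in $d_H/\delta_r$, and for interlacements this needs its own hitting/escape argument.

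You then have to count the tuples with $d_H\le k\delta_r$. By Lemma~\ref{l:intersection-two-lines}, each $x_i'$ lies in a cylinder of radius $k\delta_r$ and length $Ck\beta_r$, but only because consecutive segments meet at angle $\asymp h_r/r$. This is why the paper takes $o_i$ in the shells $3^ih_r\le d(z,\R x)\le(3^i+1)h_r$ and lets them range longitudinally over a window of length $\asymp r$. Your setup fails on both counts:
\begin{itemize}
\item Your boxes ``of transversal size $h_r$ around the segment'' contain nearly collinear configurations. Their correlation window extends along the whole segment, so your $\sup_{\text{tuple}}E[X\mid\cdot]$ is far too large.
\item Even for bent tuples, a $\delta_r$-mesh gives $E[X\mid\cdot]\gtrsim(\beta_r/\delta_r)^N=(r/h_r)^N$. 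Paley--Zygmund then yields the right order only if $EX$ exceeds the target by that factor, i.e.\ only if the boxes have longitudinal extent $\asymp r$.
\end{itemize}
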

In contrast, if $\mathcal C$ is the occupied set of the Poisson cylinders in $d=2$, then the visibility of $\partial B(r)$ from the origin via $N$ points is equivalent to the direct visibility of $\partial B(r)$ from the origin, since all the vacant components are convex. 
Thus, in this case $P_{\mathrm{vis}}^N(r) = P_{\mathrm{vis}}(r)$, which is comparable to $r^{d-1}f(r)$, as shown in \cite{MS-Visibility-AIHP}.

\smallskip

Theorem~\ref{thm:main} identifies the characteristic scales $\delta_r$ and $h_r$ governing the universal behavior of $P_{\mathrm{vis}}^N(r)$ in three models with rather different correlations structure. These scales do not depend on the decay rate of correlations and are universal in high dimensions, but they do depend on the geometric nature of the obstacle set in low dimensions. 
The function $f(r)$ is explicit in all the three models by \eqref{eq:BM-mu}, \eqref{eq:PC-mu-rho} and \eqref{eq:BI-capacity-rho}.

\subsection{Literature review}
The problem of visibility in random fields of obscuring elements is of interest in various applications, see e.g.\ \cite{Zacks-visibility}. The first mathematical study of visibility goes back to Pólya in 1918. In \cite{Polya-visibility}, he showed that in a forest of trees of constant radius $R$ planted in the vertices of the lattice $\Z^2$, an unobstructed visibility to distance $r$ from the origin is possible if $R=O(\tfrac1r)$ when $r\to\infty$. 
He also considered a random version of the problem, where trees of a fixed radius are planted uniformly at random in a box of size $n$, and computed asymptotics of the probability of visibility to distance $n$ in a given direction as $n\to\infty$. 
In \cite{Calka-visibility}, Calka et al.\ considered visibility through the vacant set of the Poisson-Boolean model in $\R^d$ with balls of fixed radius; they obtained an explicit formula for the probability of visibility in dimension $2$ and gave asymptotic bounds in dimensions $d\geq 3$. Elias and Tykesson \cite{ET-visibility} studied visibility through the vacant set of Brownian interlacements in $\R^d$ and obtained non-sharp bounds on the probability of visibility to distance $r$ in terms of the probability of visibility to distance $r$ in a given direction.
The sharp bounds \eqref{eq:visibility-bounds} were obtained in \cite{MS-Visibility-AIHP} for a class of obstacle models, which contains the Poisson-Boolean models with arbitrary i.i.d.\ random radii, the Poisson cylinders and the Brownian interlacements. In the same setting, it was shown in \cite{MS-Visibility-ECP} that conditioned on $x\in\partial B(r)$ being visible from the origin, the radius of the largest ball centered at $x$, each point of which is visible, rescaled by the visibility window $\delta_r$, converges in distribution to the exponential distribution. Visibility has also been studied in hyperbolic spaces, see e.g.\ \cite{Kahane-90,Kahane-91,BJST-visibility-H,TC-visibility-H}, where it exhibits entirely different behavior than in Euclidean spaces; notably, visibility to infinity has positive probability if the obstacle density is low.

\subsection{Proof ideas}
Let $L = \bigcup_{i=1}^{N+1}[x_{i-1},x_i]$, where $x_0=0$, $x_1,\ldots, x_N\in B(r)$ and $x_{N+1}\in \partial B(r)$. 
The upper bound in \eqref{eq:main} follows from the two key observations: 
\begin{itemize}
\item[(a)]
If at least one of $x_i$'s is at distance bigger than $\kappa h_r$ from the line segment $[0,x_{N+1}]$, then the probability that $x_i$ is visible from $x_{i-1}$ for all $i$ is much smaller than the probability $f(r)$ that $x_{N+1}$ is visible from $x_0$, 
\[
\mathsf P[L\subseteq \mathcal V] \leq e^{-\kappa(\kappa\wedge\log\log r)}f(r),                                                                                                                                                                                     
\]
see Lemma~\ref{l:upper-bound-3}.
\item[(b)]
If all $x_i$'s are within distance $(\kappa+1)h_r$ from the line segment $[0,x_{N+1}]$, then the probability that $x_i$ is visible from $x_{i-1}$ for all $i$ is comparable with the probability that $C_i$ is visible from $C_{i-1}$, where $C_0=\{0\}$, $C_{N+1} = B(x_{N+1},\delta_r)$ and $C_i$ is a cylinder of radius $\delta_r$ and length $\beta_r = r\delta_r/h_r$ centered at $x_i$ and parallel to the line segment $[0,x_{N+1}]$, 
\[
e^{-C(\kappa+1)}\mathsf P\big[\bigcup_{i=1}^{N+1}\text{$C_i$ visible from $C_{i-1}$}\big]\leq 
\mathsf P[L\subseteq \mathcal V]\leq \mathsf P\big[\bigcup_{i=1}^{N+1}\text{$C_i$ visible from $C_{i-1}$}\big],
\]
see Lemmas~\ref{l:upper-bound-1}, \ref{l:upper-bound-2} and \ref{l:visibility-upper-main}. 
\end{itemize}
With (a) and (b), the upper bound on the visibility probability $P_{\mathrm{vis}}^N(r)$ follows by summing over $\kappa\geq 0$ and using the union bound. Indeed, as one can cover $\partial B(r)$ by $O(r^{d-1}/\delta_r^{d-1})$ balls of radius $\delta_r$ and for any $x\in \partial B(r)$, one can cover $B(r)$ by $O(r((\kappa+1)h_r)^{d-1}/(\beta_r\delta_r^{d-1}))$  cylinders of radius $\delta_r$ and length $\beta_r$ parallel and all within distance $(\kappa+1)h_r$ to $[0,x]$, it follows that 
\[
P_{\mathrm{vis}}^N(r) \leq C\,\Big(\frac{r}{\delta_r}\Big)^{d-1}\Big(\frac{r h_r^{d-1}}{\beta_r \delta_r^{d-1}}\Big)^N f(r) =  
C\,\Big(\frac{r}{\delta_r}\Big)^{d-1}\Big(\frac{h_r}{\delta_r}\Big)^{dN} f(r).
\]
For the lower bound in \eqref{eq:main}, we consider a set of tuples $(x_0=0,\ldots, x_{N+1})$ such that all $x_i$'s are within distance $O(h_r)$ from the line segment $[0,x_{N+1}]$ and the projections of all $[x_{i-1},x_i]$'s on the line through $0$ and $x_{N+1}$ do not overlap and have length $O(r)$. We then show that 
\begin{itemize}
\item[(c)]
for any such $(x_0,\ldots, x_{N+1})$, the probability that $x_i$ is visible from $x_{i-1}$ for all $i$ is comparable with the probability $f(r)$ that $x_{N+1}$ is visible from $0$, 
\[
cf(r)\leq \mathsf P[L\subseteq \mathcal V] \leq Cf(r),
\]
see Lemma~\ref{l:lower-bound-1}, and 
\item[(d)]
for any such $(x_0,\ldots, x_{N+1})$ and $(x_0',\ldots, x_{N+1}')$, if $L' = \bigcup_{i=1}^{N+1}[x_{i-1}',x_i']$, then 
\[
\mathsf P[L'\subseteq\mathcal V\,|\,L\subseteq\mathcal V] \leq C\exp\Big(-\frac{c}{\delta_r}\min\big(d_H(L,L'),1\big)\Big),
\]
see Lemma~\ref{l:lower-bound-2}, 
where $d_H$ is the Hausdorff distance between $L$ and $L'$ (in fact, we use a compatible to the Hausdorff metric, which is more convenient in our setting). 
\end{itemize}
With (c) and (d), the lower bound on the visibility probability $P_{\mathrm{vis}}^N(r)$ follows from the second moment argument and the observation that if $d_H(L,L')\leq k\delta_r$, then each $x_i'$ lies in a cylinder of radius $k\delta_r$ and length $k\beta_r$ centered at $x_i$ and parallel to the line through $0$ and $x_{N+1}$, and $x_{N+1}'$ lies in the ball of radius $k\delta_r$ around $x_{N+1}$. 

\smallskip

The above considerations are general and apply to any obstacle set $\mathcal C$ with rotational invariant law: In order to prove the bounds for a specific model, it suffices to verify the properties (a)--(d) for suitable scales $\delta_r$ and $h_r$. 

\smallskip

For the three models of interest, we computed $\delta_r$ in \cite{MS-Visibility-AIHP}. The reasoning for the choice of $h_r$ is the following. 
The probability $\mathsf P[L\subseteq \mathcal V]$ is expressed in terms of (1) the volume of the $\rho$-neighborhood $B(L,\rho)$ of $L$ in the case of the Poisson-Boolean model, (2) the $(d-1)$-volume of the projection of $B(L,\rho)$ on a random hyperplane in the case of the Poisson cylinders, and (3) the capacity of $B(L,\rho)$ in the case of the Brownian interlacements. Thus, the remaining work is to compare the volumes resp.\ capacities of $\rho$-neighborhoods of $L$ to those of $[0,x_{N+1}]$ or $L'$. If $L$ is as in (c), then the length of $L$ satisfies 
\[
r\leq \sum_{i=1}^{N+1}\|x_{i-1}-x_i\| \leq r + C\frac{h_r^2}{r}.
\]
If $h_r^2/r = 1$, then the volume of $B(L,\rho)$ is comparable to the volume of the $\rho$-neighborhood of $[0,x_{N+1}]$, and if $h_r^2/r = \mathds{1}_{d\geq4} + \log r\mathds{1}_{d=3}$, then the capacity of $B(L,\rho)$ is comparable to the capacity of the $\rho$-neighborhood of $[0,x_{N+1}]$ (see \eqref{eq:capacity-asymptotics} and Lemma~\ref{l:capacity-cylinder-d3}). This observation determines the choice of $h_r$. (See also Remark~\ref{rem:hr-in-d3} about the subtle relevance of the choice of $h_r$ in the proof of the upper bound for the Brownian interlacements.)

\smallskip

While the volume of cylindrical sets can be explicitly computed, which allows for an explicit comparison of volumes of different cylindrical sets, the capacity of cylindrical sets is not explicit. This makes the comparison of capacities of various cylindrical sets substantially more difficult, see Sections~\ref{sec:upper-bound-BI-2} and \ref{sec:lower-bound-BI}, which present the main technical contributions of our work.

\subsection{Structure of the paper} In Section~\ref{sec:notation+models}, we introduce the common notation used throughout the paper and define the three models of interest precisely. 
In Section~\ref{sec:auxiliary-results}, we provide preliminary results on the Brownian motion, capacity and equilibrium measure, which will be useful in the proof of the main result for the Brownian interlacements, as well as some basic geometric properties, which will be useful in the context of Poisson-Boolean models and Poisson cylinders. We prove the upper bound of \eqref{eq:main} in Section~\ref{sec:upper-bound}; we first reduce the statement to several general properties of the obstacle set (Lemmas~\ref{l:upper-bound-1}--\ref{l:upper-bound-3}) and then prove these properties for the Poisson-Boolean models in Section~\ref{sec:upper-bound-BM}, for the Poisson cylinders in Section~\ref{sec:upper-bound-PC}, and for the Brownian interlacements in Section~\ref{sec:upper-bound-BI-1} and \ref{sec:upper-bound-BI-2}. The lower bound of \eqref{eq:main} is proven in Section~\ref{sec:lower-bound}; here, we also first reduce the statement to some general properties of the obstacle set (Lemmas~\ref{l:lower-bound-1} and \ref{l:lower-bound-2}) and then verify them separately for each of the three models, for the Poisson-Boolean models in Section~\ref{sec:lower-bound-BM}, for the Poisson cylinders in Section~\ref{sec:lower-bound-PC}, and for the Brownian interlacements in Section~\ref{sec:lower-bound-BI}.

\section{Notation and models}\label{sec:notation+models}

In this section, we introduce some common notation, used throughout the paper, and describe the three models of interest precisely.

\smallskip

Let $x\in\R^d$, $\rho>0$ and $K,K'\subset \R^d$. We denote by $B(x,\rho)$ the closed Euclidean ball at $x$ with radius $\rho$ and and by $\partial B(x,\rho)$ its boundary; we write $B(\rho)=B(0,\rho)$ and $\partial B(\rho) = \partial B(0,\rho)$. We denote the closed $\rho$-neighborhood of $K$ by $B(K,\rho$), that is $B(K,\rho) = \bigcup_{x\in K}B(x,\rho)$. 
The Euclidean distance between $K$ and $K'$ is denoted by $d(K,K')$ and we write $d(x,K)$ instead of $d(\{x\},K)$ for the distance from $x$ to $K$. 
The $k$-dimensional Lebesgue measure is denoted by $\lambda_k$, and we denote by $\kappa_k$ the $\lambda_k$-volume of the $k$-dimensional unit ball, that is $\kappa_k = \pi^{k/2}/\Gamma(k/2+1)$. 

We write $e_1,\ldots, e_d$ for the canonical orthonormal basis of $\R^d$. For $x\in \R^d$, we write $x(i)$ for the $i$-th coordinate of $x$ in the canonical basis. 

For $x\neq y\in \R^d$, we write $\ell_{x,y}^\infty$ for the unique infinite line passing through $x$ and $y$. For a line segment $L = [x,y]$, we write $|L|$ for the length $\|x-y\|$ of $L$.  

\smallskip

Let $\mathcal C\subseteq \R^d$ be an obstacle set. We write $\mathcal V = \R^d\setminus \mathcal C$ for the vacant set of $\mathcal C$. 
For sets $A_1,A_2$, we say that $A_1$ is visible from $A_2$ (through the vacant set of $\mathcal C$) if there exist $x_1\in A_1$ and $x_2\in A_2$ such that the line segment $[x_1,x_2]$ is contained in $\mathcal V$. 
For sets $A_1,\ldots, A_k$, we write $\mathrm{vis}(A_1,\ldots, A_k)$ for the event that $A_{i+1}$ is visible from $A_i$ for each $1\leq i<k$; if $A_i=\{a_i\}$ then we omit the parentheses from the notation. 

\smallskip

We define the scaling functions $\delta_r$, $h_r$ and $\beta_r$ as follows:
\begin{itemize}
\item[(a)]
For the Poisson-Boolean models in dimensions $d\geq 2$, the Poisson cylinder model in dimensions $d\geq 3$, and the Brownian interlacements in dimensions $d\geq 4$, 
\begin{equation}\label{eq:delta-beta-h-1}
\delta_r = \frac1r,\,\,h_r = \sqrt{r},\,\,\beta_r = \frac{r\delta_r}{h_r} = \frac{1}{\sqrt{r}},
\end{equation}
\item[(b)]
for the Brownian interlacements in dimension $d=3$, 
\begin{equation}\label{eq:delta-beta-h-2}
\delta_r = \frac{(\log r)^2}r,\,\,h_r =\sqrt{r\log r},\,\,\beta_r = \frac{r\delta_r}{h_r} =\frac{(\log r)^{3/2}}{\sqrt{r}}.
\end{equation}
\end{itemize}

Throughout the paper, we use $c$ and $C$ to denote strictly positive constants that depend only on $d$, $N$ and the distribution of the obstacle set $\mathcal C$. The exact values of these constants are allowed to change at each occurrence, even within the same string of inequalities.

\subsection{Poisson-Boolean models}

Let $d\geq 2$. For $\alpha>0$ and a probability distribution $\mathsf Q$ on $\R_+$, we consider a Poisson point process $\omega = \sum_{i\geq 1}\delta_{(x_i,r_i)}$ on $\R^d\times\R_+$ with intensity measure $\alpha dx\otimes \mathsf Q$. The support of $\omega$ induces the closed random subset $\mathcal C$ of $\R^d$, 
\[
\mathcal C = \mathcal C(\omega) =\bigcup\limits_{i\geq 1}B(x_i,r_i),
\]
which is called the \emph{Poisson-Boolean model with intensity $\alpha$ and radii distribution $\mathsf Q$}; we denote its distribution by $\mathsf P^\alpha_{\mathsf Q}$. The set $\mathcal C$ does not coincide with $\R^d$ if and only if 
\[
\mathsf E[\rho^d]<\infty, 
\]
see e.g.\ \cite[Proposition~3.1]{MR-Book}, which we will always assume in the paper. In this and subsequent discussions of the Poisson-Boolean models, we denote by $\rho$ a generic random variable with distribution $\mathsf Q$. 

The number of balls that intersect a compact set $K$ is a Poisson random variable with parameter 
\[
\int\limits_{(x,r)\,:\,B(x,r)\cap K\neq\emptyset}\alpha dx\otimes \mathsf Q(dr) = \alpha\int\limits_{\R^d}\mathsf P\big[\rho\geq d(x,K)\big]\,dx
= \alpha\,\mathsf E\big[\lambda_d\big(B(K,\rho)\big)\big].
\]
Thus, 
\begin{equation}\label{eq:BM-mu}
\mathsf P^\alpha_\mathsf Q \big[\mathcal C\cap K = \emptyset\big] = \exp\Big(-\alpha\mathsf E\big[\lambda_d\big(B(K,\rho)\big)\big]\Big),\quad\text{for compact }K\subset\R^d. 
\end{equation}
The relations \eqref{eq:BM-mu} characterize the law $\mathsf P^\alpha_\mathsf Q$ of $\mathcal C$.\footnote{The law of $\mathcal C$ is a probability measure on $(\Sigma,\mathscr F)$, where $\Sigma$ is the set of closed sets in $\R^d$ and $\mathscr F$ is the sigma-algebra on $\Sigma$ generated by sets $\{F\in\Sigma\,:\,F\cap K=\emptyset\}$ for compacts $K$, see e.g.\ \cite[Chapter~2]{Matheron}.}
We refer to \cite{MR-Book} for an introduction to Boolean models and to \cite{Gouere08,ATT18,Pen18,DRT-Boolean} and references therein for futher results.

\subsection{Poisson cylinders}

Let $d\geq 2$. Let $\mathbb L$ be the set of lines in $\R^d$ 
defined as the affine Grassmanian of $1$-dimensional affine subspaces of $\R^d$. Let $SO_d$ be the topological group of rigid rotations of $\R^d$ and denote by $\nu$ the unique Haar measure on $SO_d$ such that $\nu(SO_d)=1$. Let $\ell$ be the line along the vector $e_1$ and $H$ the hyperplane orthogonal to $e_1$. Let $\gamma:H\times SO_d\to \mathbb L$ be defined as $\gamma(x,\phi) = \phi(x+\ell)$, where $x+\ell$ is the translation of $\ell$ by $x$, and consider the measure 
\[
\mu(\cdot) = (\lambda_{d-1}\otimes\nu)\big(\gamma^{-1}(\cdot)\big)
\]
on $\mathbb L$. For $\alpha>0$, we consider a Poisson point process $\omega = \sum_{i\geq 1}\delta_{L_i}$ on $\mathbb L$ with intensity measure $\alpha\mu$, which is called the \emph{Poisson lines process at level $\alpha$}.  
For each $\rho>0$, the support of $\omega$ induces the closed random subset $\mathcal C$ of $\R^d$, 
\[
\mathcal C = \mathcal C(\omega) =\bigcup\limits_{i\geq 1}B(L_i,\rho),
\]
which is called the \emph{Poisson cylinders process at level $\alpha$ with radius $\rho$}; we denote its distribution by $\mathsf P^\alpha_\rho$. The law of $\mathcal C$ is translation and rotation invariant and exhibits long-range correlations, 
\[
c\big(1 +\|x-y\|\big)^{1-d}\leq \mathrm{cov}^\alpha_\rho\big(\mathds{1}_{x\in\mathcal C},\mathds{1}_{y\in\mathcal C}\big)\leq C\big(1 +\|x-y\|\big)^{1-d},
\]
see \cite{TW-cylinders}. 
For $K\subset\R^d$ and $\phi\in SO_d$, we denote by $K_\phi$ the rotation of $K$ by $\phi$. Let $\pi$ be the orthogonal projection on $H$. By the definition of the intensity measure $\mu$, 
\begin{equation}\label{eq:PC-mu-rho}
\mathsf P^\alpha_\rho \big[\mathcal C\cap K = \emptyset\big] = \exp\Big(-\alpha\,\int\limits_{SO_d}\lambda_{d-1}\big(\pi(B(K,\rho)_\phi)\big)\,\nu(d\phi)\Big),\quad\text{for compact }K\subset\R^d. 
\end{equation}
The relations \eqref{eq:PC-mu-rho} characterize the law $\mathsf P^\alpha_\rho$ of $\mathcal C$. We refer the reader to \cite[Chapter~13]{SchneiderWeil} and \cite{TW-cylinders} for further details of the Poisson lines and cylinders models and to \cite{TW-cylinders, HST-PC-d3, BT-PC-connected, AT-PC-decoupling} and references therein for further results.

\subsection{Brownian interlacements}

Let $d\geq 3$.
Let $\mathsf W_+$ be the space of continuous $\R^d$-valued paths tending to infinity at infinite times. We denote by $X_t$, $t\geq 0$, the canonical process on $\mathsf W_+$  (i.e.\ $X_t(w) = w(t)$) and by $\mathcal W_+$ the sigma-algebra on $\mathsf W_+$ generated by the canonical process. 
Since the Brownian motion is transient in dimension $d\geq 3$, the law of the Brownian motion starting from $x\in\R^d$, denoted by $\mathsf P_x$, is a probability measure on $(\mathsf W_+,\mathcal W_+)$.
Given a compact set $K$ in $\R^d$, we denote by $e_K$ the \emph{equilibrium measure} of $K$, which is a finite measure uniquely determined by the last visit formula (see \cite[Theorem 3.4]{Sznitman-BMbook}):
\[
\mathsf P_x\big[L_K>0,\,L_K\in dt,\,X_{L_K}\in dy\big] = p_t(x,y)\,e_K(dy)\,dt,
\]
where $L_K(w) = \sup\{t>0\,:\,X_t(w)\in K\}$, $w\in \mathsf W_+$,
is the time of the last visit of $w$ to $K$, and $p_t(\cdot,\cdot)$ is the Brownian transition density. The total mass of $e_K$ is called the \emph{capacity} of $K$ and is denoted by $\mathrm{cap}(K)$.
For a closed ball $B$ and $x\notin \mathrm{int}(B)$, we denote by $\mathsf P^B_x$ the law of the Brownian motion starting from $x$ and conditioned on staying outside of $B$ for all positive times (see e.g.\ \cite[Theorems 4.1 and 2.2.]{Burdzy-87}).

\smallskip

Let $\mathsf W$ be the space of continuous doubly-infinite $\R^d$-valued paths tending to infinity at positive and negative infinite times. We denote by $X_t$, $t\in\R$, the canonical process on $\mathsf W$  (i.e.\ $X_t(w) = w(t)$) and by $\mathcal W$ the sigma-algebra on $\mathsf W$ generated by the canonical process. We denote the canonical time shift on $\mathsf W$ by $\theta_t$, $t\in\R$. For a closed set $F\subseteq \R^d$, we define the first entrance time of $w\in\mathsf W$ in $F$ as 
$H_F(w)= \inf\{t\in\R\,:\,X_t(w)\in F\}$.
For any closed ball $B$ of positive radius, we define the measure $Q_B$ on $(\mathsf W,\mathcal W)$ by
\[
Q_B\big[(X_{-t})_{t\geq 0}\in A,\,X_0\in dx,\,(X_t)_{t\geq 0}\in A'\big] = e_B(dx)\,\mathsf P^B_x[A]\,\mathsf P_x[A'],\quad A,A'\in \mathcal W_+.
\]
By \cite[Lemma 2.1]{Sznitman-BI}, the measures $Q_B$ are compatible, so that for any compact set $K$, the measure 
\[
Q_K[\cdot] = \mathds{1}_{\{H_K<\infty\}}\,Q_B[\theta_{H_K}^{-1}(\cdot)],
\]
on $\mathsf W$, where $B$ is any closed ball containing $K$, is well-defined. Moreover, $Q_K$ is a finite measure with $Q_K[\mathsf W] = \mathrm{cap}(K)$. 

\smallskip

Two paths $w$ and $w'$ in $\mathsf W$ are called equivalent, if $w'=\theta_t(w)$ for some $t\in\R$. The quotient set of $\mathsf W$ modulo this equivalence relation is denoted by $\mathsf W^*$. The canonical projection $\pi^*:\mathsf W\to \mathsf W^*$ induces the sigma-algebra $\mathcal W^* = \{A\subseteq \mathsf W^*\,:\,(\pi^*)^{-1}(A)\in \mathcal W\}$ on $\mathsf W^*$. 
By \cite[Theorem 2.2]{Sznitman-BI}, there exists a unique sigma-finite measure $\nu$ on $(\mathsf W^*,\mathcal W^*)$, whose restriction to any $\mathsf W_K^* = \pi^*(\{w\,:\,H_K(w)<\infty\})$ coincides with $Q_K$, more precisely, 
\[
\mathds{1}_{\mathsf W_K^*}\,\nu = \pi^*\circ Q_K,\quad \text{for any compact $K$ in $\R^d$}.
\]
Note that $\nu[\mathsf W^*_K] = Q_K[\mathsf W] = \mathrm{cap}(K)$. 

\smallskip

For $\alpha>0$, we consider a Poisson point process $\omega = \sum_{i\geq 1}\delta_{w_i^*}$  on $\mathsf W^*$ with intensity measure $\alpha\nu$, which is called the \emph{Brownian interlacement point process at level $\alpha$}. For each $\rho>0$, the support of $\omega$ induces the closed random subset $\mathcal C$ of $\R^d$, 
\[
\mathcal C = \mathcal C(\omega) = \bigcup_{i\geq1}B\big(\mathrm{range}(w_i^*),\rho\big),
\]
which is called the \emph{Brownian interlacements at level $\alpha$ with radius $\rho$}; we denote its distribution by $\mathsf P^\alpha_\rho$. Here, $\mathrm{range}(w_i^*) = \bigcup_{t\in\R}\{w_i(t)\}$ for any path $w_i$ in $\pi^{-1}(w_i^*)$. 
The law of $\mathcal C$ is translation and rotation invariant and exhibits long-range correlations, 
\[
c\big(1 +\|x-y\|\big)^{2-d}\leq \mathrm{cov}^\alpha_\rho\big(\mathds{1}_{x\in\mathcal C},\mathds{1}_{y\in\mathcal C}\big)\leq C\big(1 +\|x-y\|\big)^{2-d},
\]
see \cite{Li-BI}. 
By the definition of the intensity measure, 
\begin{equation}\label{eq:BI-capacity-rho}
\mathsf P^\alpha_\rho \big[\mathcal C\cap K = \emptyset\big] =
\exp\Big(- \alpha \mathrm{cap}(B(K,\rho))\Big),\quad\text{for compact }K\subset\R^d,
\end{equation}
see \cite[(2.32)]{Sznitman-BI}. The relations \eqref{eq:BI-capacity-rho} characterize the law $\mathsf P^\alpha_\rho$ of $\mathcal C$. We refer the reader to \cite{Sznitman-BI} for further details on the construction of the Brownian interlacements and to \cite{Sznitman-BI, Li-BI,MS-BI} and references therein for further results.

\section{Auxiliary results}\label{sec:auxiliary-results}

\subsection{Brownian motion and capacity}
In this section, we collect some properties of the Brownian motion, the capacity and the equilibrium measure of cylindrical sets, which will be used in the proof of the main result for the Brownian interlacements. 

Recall from the previous section that $\mathsf P_x$ denotes the law of the Brownian motion started in $x$ and $H_K$ is the first hitting time of $K$.  
Classically, for any $R_1<R_2$ and $y\in\R^d$ with $R_1<\|y\|<R_2$, 
\begin{equation}\label{eq:BM-hitting}
\mathsf P_y\big[H_{\partial B(R_2)}<H_{\partial B(R_1)}\big] = 
\left\{\begin{array}{ccl}
\frac{\log R_1 - \log \|y\|}{\log R_1 - \log R_2} &\quad& d=2\\[10pt]
\frac{R_1^{2-d}-\|y\|^{2-d}}{R_1^{2-d}-R_2^{2-d}} &\quad& d\geq 3
\end{array}\right.
\end{equation}
(see e.g.\ \cite[Theorem~3.18]{MP-BM-book}); in particular, when $d\geq 3$, 
\begin{equation}\label{eq:BM-escape}
\mathsf P_y\big[H_{\partial B(R_1)}=\infty\big] = 1 - \frac{\|y\|^{2-d}}{R_1^{2-d}}\,.
\end{equation}
Let $d\geq 3$. 
Let $\sigma_R$ be the uniform distribution on $\partial B(R)$ and define $\mu_R=\tfrac{2\pi^{d/2}R^{d-2}}{\Gamma(d/2-1)}\sigma_R$. 
For a sigma-finite measure $\mu$, we write $\mathsf P_\mu$ for $\int_{\R^d}\mathsf P_x[\cdot]\mu(dx)$. 
For any compact set $K$ in $\R^d$ and any $R$ such that $K\subset B(R)$, the equilibrium measure $\mu_K$ and the capacity $\mathrm{cap}(K)$ of $K$, introduced in the previous section, are equivalently defined as 
(see \cite[Theorem~3.1.10]{PortStone})
\begin{equation}\label{def:equilibrium-measure-and-capacity}
\mu_K = \mathsf P_{\mu_R}\big[H_K<\infty, X_{H_K}\in\cdot\big]\quad\text{resp.}\quad
\mathrm{cap}(K) = \mu_K(K) = \mathsf P_{\mu_R}\big[H_K<\infty\big].
\end{equation}
Capacity is an invariant under isometries and monotone function on compacts, $\mathrm{cap}(\rho K) = \rho^{d-2}\mathrm{cap}(K)$ and $\mathrm{cap}(K) = \mathrm{cap}(\partial K)$ (see \cite[Proposition~3.1.11]{PortStone}). 
Furthermore (see e.g.\ \cite[Theorem~2.4.9]{Sznitman-BMbook}), 
\begin{equation}\label{eq:capacity-variational}
\frac{1}{\mathrm{cap}(K)} = \inf\Big\{\iint G(x,y)\mu(dx)\mu(dy)\,:\,\text{$\mu$ is a probability measure on $K$}\Big\},
\end{equation}
where $G(x,y) = \frac{\Gamma(\frac{d-2}2)}{2\pi^{d/2}}\|x-y\|^{2-d}$ is the Green function for the standard Brownian motion, and the infimum is attained when $\mu$ is the normalized equilibrium measure of $K$. 

By \cite[Theorem~3.2.1]{PortStone}, the following relation between the hitting probability of a set and its equilibrium measure holds: for all compact $K$ and $x\notin K$, 
\begin{equation}\label{eq:hitting-probability-capacity}
\mathsf P_x[H_K<\infty] = \int G(x,y)\mu_K(dy).
\end{equation}

\smallskip

In the remainder of this section, we collect some results about the capacity and the equilibrium measure of cylindrical sets. 
By \cite[Lemma~2.1]{MS-Visibility-ECP}, 
\begin{equation}\label{eq:capacity-asymptotics}
\mathrm{cap}\big(B([0,x],\rho)\big) = \varkappa_d\rho^{d-3}\Big(\tfrac{\|x\|}{\log \|x\|}\mathds{1}_{d=3} + \|x\|\mathds{1}_{d\geq 4}\Big)(1+o(1)),\quad\text{as }x\to\infty,
\end{equation}
where $\varkappa_3= \pi$ and $\varkappa_d =2\pi^{\frac{d-1}2}/\Gamma(\frac{d-3}{2})$ for $d\geq 4$.
\begin{lemma}\label{l:equilibrium-measure-cylinder}
Let $r\geq 1$. Let $\mu$ be the normalized equilibrium measure of $B([0,re_1],\rho)$. There exist $c=c(d,\rho)>0$ and $C=C(d,\rho)<\infty$ such that for any $0\leq a<b\leq r$, 
\[
c\frac{b-a}{r}\leq \mu\big(B([ae_1,be_1],\rho)\big)\leq C\frac{b-a}{r}.
\]
\end{lemma}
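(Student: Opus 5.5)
Write $K=B([0,re_1],\rho)$, $K_{a,b}=B([ae_1,be_1],\rho)$, $\mu_K$ for the (unnormalized) equilibrium measure, and $\mu=\mu_K/\mathrm{cap}(K)$. By \eqref{eq:capacity-asymptotics}, $\mathrm{cap}(K)\asymp r/\log r$ in $d=3$ and $\asymp r$ in $d\geq4$. Both bounds are trivial when $r$ is bounded or $b-a$ is bounded below by a constant multiple of $r$, because $\mu$ is a probability measure. So the plan is to reduce to unit pieces. Partition $[0,r]$ into $n\asymp r$ intervals $I_j$ of length of order one, and set $K_j=B(I_j e_1,\rho)$. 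The claim then follows from the uniform estimate
\begin{equation*}
c\,\frac{\mathrm{cap}(K)}{r}\le \mu_K(K_j)\le C\,\frac{\mathrm{cap}(K)}{r}\quad\text{for all }j.
\end{equation*}
Summing this over the $j$ with $I_j\subseteq[a,b]$ gives the lower bound, and summing over the $j$ with $I_j\cap[a,b]\neq\emptyset$ gives the upper bound. When $b-a<1$ we only need the upper bound, which is one or two pieces; for the lower bound when $b-a$ is small, a sub-piece of length $b-a$ is handled in the same way, with constants depending on $\rho$.

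For the upper bound I would use \eqref{eq:hitting-probability-capacity}. Take a point $x$ at distance $2\rho$ from the axis, next to $K_j$ (placed slightly off the axis direction so that $x\notin K$). Then
\begin{equation*}
1\ge \mathsf P_x[H_K<\infty]=\int G(x,y)\,\mu_K(dy)\ge G(x,y_0)\,\mu_K(K_j)
\end{equation*}
with $\|x-y_0\|\le C$, which gives $\mu_K(K_j)\le C$. This suffices in $d\geq4$, where $\mathrm{cap}(K)/r\asymp 1$. In $d=3$ we need the sharper bound $\mu_K(K_j)\le C/\log r$, so we need a stronger estimate. The approach is to choose $x$ at distance $2\rho$ from $K_j$ (next to $K_j$, so $x\notin K$) and use the standard fact that from such a point the probability of ever hitting the full cylinder $K$ is bounded away from $1$. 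Then split the integral. On the distant pieces the contribution $\sum_i G(x,K_i)\mu_K(K_i)$ is bounded below by roughly $\sum_{i}\frac{1}{|i-j|+1}\,m$, where $m$ is the typical mass of a piece, and this sum is of order $m\log r$. Rather than working with the integral directly, I would use the variational characterization \eqref{eq:capacity-variational}. Because the equilibrium measure minimizes energy, every piece has potential equal to $1$. A piece carrying mass $m_j\gg \mathrm{cap}(K)/r$ would then force $\int G(z,\cdot)\,d\mu_K>1$ at a point $z$ of $K_j$. The reason is that $\mu_K(K_j)$ times $G$ over distances of order one already exceeds $1$ once $m_j\geq C$. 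That gives only the weak bound again, though.

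To get the sharp $d=3$ bound, the step I expect to be hardest, I would make the comparison global. By rotation invariance and symmetry, the pieces are comparable unless their mass concentrates near the ends. Let $M=\max_j\mu_K(K_j)$, attained at $j^*$, and evaluate the potential at the center of $K_{j^*}$. The potential there equals $1$. A crude lower bound on the mass near $j^*$ comes from monotonicity: the equilibrium measure of $K$ restricted to a sub-cylinder of length $\ell$ centered at $j^*$ dominates, via the last-exit/sweeping (balayage) relation, that sub-cylinder's own equilibrium measure scaled by the probability of not returning. I would try to turn this into the bound $M\log r\le C$ by dyadic blocks: the sub-cylinders of length $2^k$ around $j^*$ each carry mass at least $c\,2^k/\log r$, and each contributes $\asymp 2^{-k}$ times that to the potential. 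I am less sure of this route and would check it carefully.

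For the lower bound, $\mu_K(K_j)\ge c\,\mathrm{cap}(K)/r$, use the last exit decomposition \eqref{def:equilibrium-measure-and-capacity}: $\mu_K(K_j)$ is the $\mathsf P_{\mu_R}$-mass of paths whose first hit of $K$ lies in $K_j$. This is at least the mass of paths whose first hit of the slab $\{y: y(1)\in I_j\}\cap B([0,re_1],2\rho)$ occurs and which then hit $K_j$ before the rest of $K$. Equivalently, by time reversal, $\mu_K(K_j)=\int_{\partial K_j}\mathsf P_y[\text{escape }K]\,e_{K_j}(dy)$. Since $\mathrm{cap}(K_j)\asymp 1$, it then suffices to show that from a point on the outer boundary of $K_j$, the probability of never returning to $K$ is at least $c\,\mathrm{cap}(K)/r$. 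For this, move radially to distance $r$ with probability $\asymp\mathrm{cap}(K)/r$: in $d=3$ this is two-dimensional escape of the projected motion, which gives the $1/\log r$ factor. From distance $r$ the walk escapes $K$ with positive probability by \eqref{eq:BM-escape}, applied to a ball containing $K$ at comparable distance. The matching upper bound on this escape probability is what would also settle the hard step above.
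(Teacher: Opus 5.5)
The genuine gap is the upper bound in $d=3$. There you need $\mu_K(K_j)\leq C/\log r$ for a single unit piece, and you never prove it.

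Your $d\geq4$ upper bound via $1\geq\int G(x,y)\,\mu_K(dy)$ is fine, and your escape-probability lower bound is workable. The identity $\mu_K(K_j)=\int\mathsf P_y[\text{escape }K]\,e_{K_j}(dy)$ is degenerate as written, because Brownian motion started on $\partial K$ returns to $K$ at once. A last-exit decomposition through points at distance $2\rho$ from the axis can be made to give $\mu_K(K_j)\geq c\,\mathrm{cap}(K)/r$.

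The ``standard fact'' you invoke for $d=3$ is false. The infinite cylinder is hit almost surely in $d=3$, so from distance $2\rho$ the hitting probability of $K$ tends to $1$; the escape probability is of order $1/\log r$, exactly as in your own lower bound. This is the real obstruction for any argument built on $\int G(x,\cdot)\,d\mu_K\leq 1$. Since the left side tends to $1$, you can extract $\mu_K(K_j)\lesssim 1/\log r$ only if you know the contribution of all other pieces up to an additive error $O(1/\log r)$. Crude lower bounds on their masses cannot supply that.

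Your dyadic step runs the wrong way for the same reason. Lower bounds $c\,2^k/\log r$ on the blocks around $j^*$ give only a lower bound of order one on the potential at the centre of $K_{j^*}$. That is compatible with the potential being $1$ and says nothing about $M$. To force $M\log r\leq C$ you would need every block to carry mass $\gtrsim 2^kM$, i.e.\ comparability of neighbouring pieces, which is the claim itself. A uniform bound $C/\log r$ on escape probabilities next to $K$, including near the tips, restates the difficulty rather than resolving it.

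The missing idea, which is how the paper argues in all $d\geq3$ at once, is a soft comparison of cylinders of different lengths via \eqref{def:equilibrium-measure-and-capacity}. The quantity $\mathrm{cap}(K)\,\mu(K_{a,b})=\mathsf P_{\mu_R}[H_{K_{a,b}}=H_K<\infty]$ is invariant under translating both sets. It can only increase when $K$ is replaced by a smaller set that still contains the piece.

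\emph{Upper bound.} Pack $M=\lfloor r/4(b-a)\rfloor$ translates of $K_{a,b}$ side by side into $K'=B([0,\frac r4e_1],\rho)$. Summing the (essentially disjoint) first-entrance events gives $M\,\mathrm{cap}(K)\,\mu(K_{a,b})\leq\mathrm{cap}(K')\leq\mathrm{cap}(K)$, i.e.\ $\mu(K_{a,b})\leq 1/M$. No logarithm is ever estimated.

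\emph{Lower bound.} This is the mirror image. Cover $B([0,3re_1],\rho)$ by its two outer thirds and $\lceil r/(b-a)\rceil$ translates of the piece, each lying in a translate of $K$ inside the long cylinder. Then use \eqref{eq:capacity-asymptotics} only to get $\mathrm{cap}(B([0,3re_1],\rho))\geq(2+c)\,\mathrm{cap}(K)$ for large $r$.

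There are two caveats. First, the packing needs $K'$ to lie in each translate of $K$, which requires roughly $\frac r4\leq a\leq\frac{3r}4$. So pieces near the ends need a supplementary argument in the paper's approach too; your local $d\geq4$ argument has no such restriction. Second, for $b-a<1$ your ``one or two pieces'' gives only $C/r$, not $C(b-a)/r$. Nothing better is possible at $a=0$, though: $B([0,be_1],\rho)$ contains the end cap of $K$, which carries a positive $\mu$-mass independent of $b$. The lemma should be read for $b-a\geq1$, which covers how it is applied.
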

\begin{proof}
Let $H_{a,b}$ be the first entrance time of the Brownian motion in $B([ae_1,be_1],\rho)$. Let $N=\lceil r/(b-a)\rceil$ and define $x_n=r+(b-a)n$, for $0\leq n\leq N$. 
By the definitions of the capacity and the equilibrium measure in \eqref{def:equilibrium-measure-and-capacity}, for all $R$ large enough, 
\begin{multline*}
\mathrm{cap}\big(B([0,3re_1],\rho)\big) = \mathsf P_{\mu_R}[H_{0,3r}<\infty]\\
\begin{aligned}
&\leq \mathsf P_{\mu_R}[H_{0,r}<\infty] +  \mathsf P_{\mu_R}[H_{2r,3r}<\infty] + \sum_{n=1}^N\mathsf P_{\mu_R}[H_{x_{n-1},x_n}=H_{0,3r}<\infty]\\
&\leq 2\,\mathrm{cap}\big(B([0,re_1],\rho)\big) + N\mathsf P_{\mu_R}[H_{a,b}=H_{0,r}<\infty]\\
&= 2\,\mathrm{cap}\big(B([0,re_1],\rho)\big) + N\,\mathrm{cap}\big(B([0,re_1],\rho)\big)\,\mu\big(B([ae_1,be_1],\rho)\big).
\end{aligned}
\end{multline*}
By \eqref{eq:capacity-asymptotics}, we obtain that $\mu\big(B([ae_1,be_1],\rho)\big)\geq c\frac{b-a}{r}$. 

\smallskip

For the upper bound, it suffices to prove it for $b-a<\frac14r$. Let $M=\lfloor r/4(b-a)\rfloor$ and define
$y_m = (b-a)m$, for $0\leq m\leq M$. 
By the definitions of the capacity and the equilibrium measure in \eqref{def:equilibrium-measure-and-capacity}, for all $R$ large enough, 
\begin{eqnarray*}
\mathrm{cap}\big(B([0,\tfrac14re_1],\rho)\big) 
&= &\mathsf P_{\mu_R}[H_{0,r/4}<\infty]
\geq \sum_{i=1}^M P_{\mu_R}[H_{y_{i-1},y_i}=H_{0,r/4}<\infty]\\
&\geq &M\mathsf P_{\mu_R}[H_{a,b}=H_{0,r}<\infty]\\ 
&= &M\,\mathrm{cap}\big(B([0,re_1],\rho)\big)\,\mu\big(B([ae_1,be_1],\rho)\big).
\end{eqnarray*}
By \eqref{eq:capacity-asymptotics}, we obtain that $\mu\big(B([ae_1,be_1],\rho)\big)\leq C\frac{b-a}{r}$. 
\end{proof}

\begin{lemma}\label{l:capacity-cylinder-d3}
Let $d=3$. 
Let $\rho,D>0$. There exists $C=C(\rho,D)$ such that for all $r\geq 1$, 
\[
\mathrm{cap}\big(B([0,(r+D\log r)e_1],\rho)\big) \leq 
\mathrm{cap}\big(B([0,re_1],\rho)\big) + C.
\]
\end{lemma}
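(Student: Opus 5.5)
Write $K_r = B([0,re_1],\rho)$, $K' = B([re_1,(r+D\log r)e_1],\rho)$ and $\ell=D\log r$. Then $B([0,(r+\ell)e_1],\rho)=K_r\cup K'$. The plan is to use the hitting-probability representation \eqref{def:equilibrium-measure-and-capacity}, exactly as in the proof of Lemma~\ref{l:equilibrium-measure-cylinder}. For $R$ large,
\[
\mathrm{cap}(K_r\cup K') = \mathsf P_{\mu_R}[H_{K_r}<\infty] + \mathsf P_{\mu_R}[H_{K'}<\infty,\ H_{K_r}=\infty],
\]
so it suffices to bound the last term by a constant $C(\rho,D)$. Bounding it crudely by $\mathrm{cap}(K')\asymp \ell/\log\ell$ is too weak. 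We must use that a path hitting the short piece $K'$ very likely also hits the long cylinder $K_r$ next to it.

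\emph{Step 1: strong Markov at $H_{K'}$.} We have
\[
\mathsf P_{\mu_R}[H_{K'}<\infty,\ H_{K_r}=\infty]\le \mathrm{cap}(K')\,\sup_{y\in\partial K'}\mathsf P_y[H_{K_r}=\infty].
\]
By \eqref{eq:capacity-asymptotics} (or monotonicity plus scaling for small $\ell$), $\mathrm{cap}(K')\le C\ell/\log \ell\le C\log r/\log\log r$. It therefore suffices to show that for every $y\in K'$ (so $d(y,K_r)\le \ell+\rho$),
\[
\mathsf P_y[H_{K_r}=\infty]\le \frac{C\log\log r}{\log r}.
\]

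\emph{Step 2: escape estimate from near a long cylinder in $d=3$.} Take $y$ within distance $\ell+\rho\approx D\log r$ of the endpoint $re_1$ of $K_r$. Since $K_r\supseteq B([(r-r/2)e_1,re_1],\rho)$, it is enough to bound the escape probability from a cylinder of length $r/2$ for a point at distance $O(\log r)$ from its end.

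I would use \eqref{eq:hitting-probability-capacity} with the equilibrium measure of $K_r$ and Lemma~\ref{l:equilibrium-measure-cylinder}:
\[
\mathsf P_y[H_{K_r}<\infty]=\int G(y,z)\mu_{K_r}(dz)\ge c\,\mathrm{cap}(K_r)\sum_k G(y,\cdot)\big|_{\text{slab }k}\cdot\tfrac{1}{r},
\]
which gives roughly $\frac{\mathrm{cap}(K_r)}{r}\int_{\ell}^{r}\frac{dt}{t}\approx \frac{\pi}{\log r}\cdot\frac{\log(r/\ell)}{2\pi}$. This lower bound has constant less than $1$, so it is not sharp enough. I would get the sharp constant by comparison instead.

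For the comparison, extend $K_r$ symmetrically to $\tilde K=B([(r-r')e_1,(r+r')e_1],\rho)$, with $y$ near its middle. By symmetry and \eqref{eq:hitting-probability-capacity}, the escape probability from the midpoint region of a long cylinder is $O(\log(\ell)/\log r)$. Then $\mathsf P_y[H_{K_r}=\infty]\le \mathsf P_y[H_{\tilde K}=\infty]+\mathsf P_y[\text{hit the extension first, then escape } K_r]$, and the second term is handled by recursion or by using that half of $\tilde K$ carries half the harmonic weight.

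\emph{Main obstacle.} Step 2 is the crux: obtaining $1-\mathsf P_y[H_{K_r}<\infty]=O(\log\log r/\log r)$ with the precise leading constant. Errors of order $1/\log r$ times a constant are harmless, but a multiplicative loss in the hitting probability is fatal. If the comparison fails, I would instead use the equilibrium measure directly via a last-exit decomposition, or the variational formula \eqref{eq:capacity-variational}. In that approach, take the probability measure $\mu$ on $K_r\cup K'$ to be the rescaled normalized equilibrium measure of $K_r$, stretched affinely along $e_1$ by the factor $(r+\ell)/r$. Then show the energy changes by at most $(1-C/\mathrm{cap}(K_r)^{-1}\cdots)$, i.e.
\[
\frac{1}{\mathrm{cap}(K_r\cup K')}\ge \frac{1}{\mathrm{cap}(K_r)}-\frac{C}{\mathrm{cap}(K_r)^2}.
\]
This needs a lower bound on the energy, so the variational formula goes the wrong direction for an upper bound on capacity. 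That is why I would commit to the hitting-probability route above.
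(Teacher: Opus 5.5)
\textbf{There is a genuine gap.} The estimate you reduce to in Step~2,
\[
\sup_{y\in K'}\mathsf P_y[H_{K_r}=\infty]\le C\,\frac{\log\log r}{\log r},
\]
is false. So the route through Step~1 cannot give a bounded error.

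The bound $\log t/\log r$ for escaping from distance $t$ is the bulk behaviour. It comes from the planar projection when the tube extends far in both directions. From a point at distance $t$ beyond the end of $K_r$, only one half of the tube is present at each dyadic scale between $t$ and $r$. The logarithmic ``hazard'' is therefore halved, and the escape probability is of order $(\log t/\log r)^{1/2}$. This mirrors the halving of the exponent for planar Brownian motion avoiding a half-line ($n^{-1/2}$) versus a line ($n^{-1}$).

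You can also see this from $\mathsf P_y[H_{K_r}<\infty]=\int G(y,z)\mu_{K_r}(dz)$. Write the distance to the end as $u=r^{x}$ and the equilibrium line density as $\Lambda(x)/\log r$. To leading order the equilibrium condition becomes $2x\Lambda(x)+\int_x^1\Lambda(s)\,ds=2\pi$. Hence $\Lambda(x)=\pi x^{-1/2}$ rather than the bulk constant $\pi$, and $\mathsf P_y[H_{K_r}=\infty]\approx(\log t/\log r)^{1/2}$.

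For $t\asymp D\log r$, Step~1 therefore only yields $\mathrm{cap}(K')\sqrt{\log\log r/\log r}\asymp\sqrt{\log r/\log\log r}\to\infty$. Your symmetric-extension comparison does not repair this. It gives $\mathsf P_y[H_{K_r}=\infty]\ge \mathsf P_y[H_{\tilde K}=\infty]$, which is the wrong direction. The term you propose to handle ``by recursion'' (hit the extension, then escape $K_r$) is exactly the dominant one.

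The loss already sits in Step~1. The exact identity is
\[
\mathrm{cap}(K_r\cup K')-\mathrm{cap}(K_r)=\int_{K'\setminus K_r}\mathsf P_z[H_{K_r}=\infty]\,\mu_{K_r\cup K'}(dz),
\]
which involves the equilibrium measure of the long set. Replacing its restriction to $K'$ by the equilibrium measure of $K'$ alone forgets that the path must reach $K'$ before $K_r$. Exploiting the exact identity would need sharp asymptotics near the end of the tube, both for $\mu_{K_r\cup K'}$ and for escape probabilities. That is far more than the lemma requires.

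\textbf{The missing idea is scaling plus monotonicity.} Set $\lambda=(r+D\log r)/r$. Then $B([0,(r+D\log r)e_1],\rho)=\lambda B([0,re_1],\rho/\lambda)$. Since $\mathrm{cap}(\lambda K)=\lambda\,\mathrm{cap}(K)$ in $d=3$ and $\rho/\lambda\le\rho$,
\[
\mathrm{cap}\big(B([0,(r+D\log r)e_1],\rho)\big)\le\lambda\,\mathrm{cap}\big(B([0,re_1],\rho)\big)=\mathrm{cap}\big(B([0,re_1],\rho)\big)+\frac{D\log r}{r}\,\mathrm{cap}\big(B([0,re_1],\rho)\big).
\]
The last term is bounded by \eqref{eq:capacity-asymptotics}. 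This is the paper's argument. It uses only the global size $\mathrm{cap}\asymp r/\log r$ and never touches the behaviour near the end of the tube. You were right that the crude bound by $\mathrm{cap}(K')$ is too weak and that the variational formula points the wrong way for an upper bound.
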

\begin{proof}
By the scaling property and the monotonicity of the capacity, 
\begin{eqnarray*}
\mathrm{cap}\big(B([0,(r+D\log r)e_1],\rho)\big) 
&= &\Big(\frac{r+D\log r}{r}\Big)^{d-2}\,
\mathrm{cap}\Big(B\big([0,re_1],\frac{r}{r+D\log r}\rho\big)\Big)\\
&\leq 
&\Big(\frac{r+D\log r}{r}\Big)^{d-2}\,
\mathrm{cap}\big(B([0,re_1],\rho)\big)\\
&\leq &\mathrm{cap}\big(B([0,re_1],\rho)\big) + C,
\end{eqnarray*}
where the last inequality follows from \eqref{eq:capacity-asymptotics}. 
\end{proof}

\subsection{Geometric preliminaries}
In this section, we collect some auxiliary geometric properties used in the proof of the main result for the Poisson-Boolean and Poisson cylinders models. 

\begin{lemma}\label{l:volume-cylinder}
Let $d\geq 2$. For any $x\in \R^d$ and $\rho>0$,
\[
\lambda_d\big(B([0,x],\rho)\big) = \kappa_{d-1}\rho^{d-1}\|x\| + \kappa_d\rho^d,
\]
where $\kappa_s$ is the volume of the $s$-dimensional Euclidean unit ball.
\end{lemma}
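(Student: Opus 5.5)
By rotation invariance of $\lambda_d$, we may assume $x=\|x\|e_1$; if $x=0$ the set is simply $B(0,\rho)$ and the formula reduces to $\lambda_d(B(\rho))=\kappa_d\rho^d$, so let $\|x\|>0$. Write $\ell=\|x\|$ and decompose $B([0,\ell e_1],\rho)$ into three pieces that overlap only in sets of $\lambda_d$-measure zero:
\[
S_1=\{y:\,0\leq y(1)\leq \ell,\ \textstyle\sum_{i\geq2}y(i)^2\leq\rho^2\},\quad
S_0=B(0,\rho)\cap\{y(1)\leq 0\},\quad
S_\ell=B(\ell e_1,\rho)\cap\{y(1)\geq \ell\}.
\]

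To verify this decomposition, compute the distance from a point $y$ to the segment. The nearest point of $[0,\ell e_1]$ to $y$ is $te_1$ with $t=\min(\max(y(1),0),\ell)$. Consequently:
\begin{itemize}
\item if $0\leq y(1)\leq\ell$, then $d(y,[0,\ell e_1])$ is the distance from $(y(2),\dots,y(d))$ to the origin in $\R^{d-1}$, which gives $S_1$;
\item if $y(1)<0$, then $d(y,[0,\ell e_1])=\|y\|$, which gives $S_0$;
\item if $y(1)>\ell$, then $d(y,[0,\ell e_1])=\|y-\ell e_1\|$, which gives $S_\ell$.
\end{itemize}
Hence $B([0,\ell e_1],\rho)=S_0\cup S_1\cup S_\ell$. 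The pieces meet only on the hyperplanes $\{y(1)=0\}$ and $\{y(1)=\ell\}$, which are $\lambda_d$-null.

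By Fubini, $\lambda_d(S_1)=\ell\cdot\lambda_{d-1}(B_{d-1}(\rho))=\kappa_{d-1}\rho^{d-1}\ell$. The sets $S_0$ and $S_\ell$ are two half-balls of radius $\rho$; after translating $S_\ell$ by $-\ell e_1$ they are the two halves of $B(0,\rho)$ cut by $\{y(1)=0\}$. Their volumes therefore sum to $\kappa_d\rho^d$.

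Adding the three contributions gives $\lambda_d\big(B([0,x],\rho)\big)=\kappa_{d-1}\rho^{d-1}\|x\|+\kappa_d\rho^d$.
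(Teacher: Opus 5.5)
Your proof is correct and follows essentially the same route as the paper: it is the standard decomposition of $B([0,x],\rho)$ into a cylinder of length $\|x\|$ and two half-balls, which is what the paper means by calling the lemma immediate from the definition. You additionally write out the details the paper leaves implicit, namely the nearest-point (clamping) computation, the measure-zero overlaps, and the Fubini step.
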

\begin{proof}
Immediate from the definition of $B([0,x],\rho)$. 
\end{proof}

\begin{lemma}\label{l:intersection-two-lines}
Let $\ell_1$ and $\ell_2$ be lines in $\R^d$ intersecting in a single point $z$, and let $\theta\in(0,\frac\pi2]$ be the angle between them. 
Let $u_1$ be a unit vector parallel to $\ell_1$. 
Then for any $\delta>0$,  
\[
B(\ell_1,\delta)\cap B(\ell_2,\delta)\subseteq 
z + \Big\{x\in B(\ell_1,\delta)\,:\,\big|\langle x,u_1\rangle\big|\leq \frac{\delta}{\tan(\theta/2)}\Big\}.
\]
In words, the intersection of $\delta$-neighborhoods of the two lines is contained in the finite cylinder centered at $z$ of radius $\delta$ and length $2\frac{\delta}{\tan(\theta/2)}$ whose axis is aligned with $\ell_1$. 
\end{lemma}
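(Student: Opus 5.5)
We reduce to a two-dimensional computation in the plane spanned by the two lines. Translate so that $z=0$ and work in that plane, which we identify with $\R^2$ by choosing orthonormal coordinates there. Let $u_2$ be a unit vector along $\ell_2$ with $\langle u_1,u_2\rangle=\cos\theta\geq 0$. For $x\in\R^d$, write $x = x_\Pi + x_\perp$, where $x_\Pi$ is the orthogonal projection onto $\Pi=\mathrm{span}(u_1,u_2)$. Since both lines lie in $\Pi$, we have
\[
d(x,\ell_i)^2 = d(x_\Pi,\ell_i)^2 + \|x_\perp\|^2 .
\]
Hence $x\in B(\ell_1,\delta)\cap B(\ell_2,\delta)$ implies $d(x_\Pi,\ell_i)\leq\delta$ for $i=1,2$. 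Moreover $\langle x,u_1\rangle=\langle x_\Pi,u_1\rangle$. It therefore suffices to show that any point $y\in\Pi$ with $d(y,\ell_1)\leq\delta$ and $d(y,\ell_2)\leq\delta$ satisfies $|\langle y,u_1\rangle|\leq \delta/\tan(\theta/2)$. The inclusion $x\in B(\ell_1,\delta)$ is given, so this is the only remaining claim.

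The planar claim is elementary. The intersection of the two strips $\{d(\cdot,\ell_1)\leq\delta\}$ and $\{d(\cdot,\ell_2)\leq\delta\}$ in $\Pi$ is a parallelogram centered at $0$. Its vertices are the four points at distance exactly $\delta$ from both lines. These vertices lie on the two angle bisectors of the lines:
\begin{itemize}
\item on the bisector of the angle $\theta$, at distance $\delta/\sin(\theta/2)$ from $0$;
\item on the bisector of the supplementary angle $\pi-\theta$, at distance $\delta/\cos(\theta/2)$ from $0$.
\end{itemize}
The linear functional $y\mapsto\langle y,u_1\rangle$ attains its maximum absolute value over a convex polygon at a vertex. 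The bisector of the angle $\theta$ makes angle $\theta/2$ with $u_1$, so those vertices give
\[
|\langle y,u_1\rangle| = \frac{\delta}{\sin(\theta/2)}\cos(\theta/2)=\frac{\delta}{\tan(\theta/2)}.
\]
The other bisector makes angle $\pi/2-\theta/2$ with $\ell_1$, so those vertices give
\[
\frac{\delta}{\cos(\theta/2)}\sin(\theta/2)=\delta\tan(\theta/2).
\]
Since $\theta\leq\pi/2$, we have $\tan(\theta/2)\leq 1\leq 1/\tan(\theta/2)$, and the maximum is $\delta/\tan(\theta/2)$.

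Alternatively, one can avoid the vertex description and argue directly. Write $y=tu_1+s v$ with $v\perp u_1$ in $\Pi$, so that $|s|\leq\delta$. The distance to $\ell_2$ is $|t\sin\theta - s\cos\theta|$ for the appropriate orientation of $v$. The condition $|t\sin\theta - s\cos\theta|\leq\delta$ gives
\[
|t|\leq \frac{\delta(1+\cos\theta)}{\sin\theta}=\frac{\delta}{\tan(\theta/2)}
\]
by the half-angle identity. This version is cleaner and is the one I would write.

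There is no real obstacle. The only point needing care is the reduction to the plane $\Pi$ via the Pythagorean decomposition of distances, together with fixing the orientation of $v$ (equivalently, the sign convention $\cos\theta\geq0$) so that the bound uses $1+\cos\theta$ rather than $1-\cos\theta$.
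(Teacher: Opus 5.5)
Your proof is correct and follows the paper's argument. The paper also works in the plane $\Pi$ spanned by the two lines, notes that the intersection of the strips is a rhombus whose farthest vertices lie on the bisector at angle $\theta/2$ to $\ell_1$ and at distance $\delta$ from $\ell_1$, and reads off the projection $\delta/\tan(\theta/2)$. Your Pythagorean reduction $d(x,\ell_i)^2=d(x_\Pi,\ell_i)^2+\|x_\perp\|^2$ (after translating $z$ to the origin) makes explicit the passage from $\R^d$ to $\Pi$, which the paper leaves implicit. Your coordinate variant $|t|\sin\theta\le\delta(1+\cos\theta)$ is an equally valid and slightly more self-contained way to obtain the same bound.
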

\begin{proof}
Let $\Pi$ be the plane spanned by $\ell_1$ and $\ell_2$. The intersection of the $\delta$-neighborhoods of $\ell_1$ and $\ell_2$ in $\Pi$ is a rhombus with internal angles $\theta$ and $\pi-\theta$. 
The angle between $\ell_1$ and the line connecting the farthest vertices of the rhombus equals $\theta/2$. Since any fatherst point of the rhombus is at distance $\delta$ from $\ell_1$, its projection on $\ell_1$ is at distance $\frac{\delta}{\tan(\theta/2)}$ from $z$. The result follows. 
\end{proof}

\begin{lemma}\label{l:volume-difference}
Let $\rho>0$. For any line $\ell$ and $y,y'\in\R^d$ with $d(y',\ell)\geq d(y,\ell)$,
\[
\lambda_d\big(B([y,y'],\rho)\setminus B(\ell,\rho)\big)\geq 
c\rho^{d-2}\min(\rho,1)\, \|y-y'\| \min\big(d(y',\ell),1\big),
\]
for some $c=c(d)>0$. 
\end{lemma}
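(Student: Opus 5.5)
Set $s=\min(d(y',\ell),1)$ and $L=\|y-y'\|$. The plan is to exhibit an explicit region of volume of order $\rho^{d-2}\min(\rho,1)\,L\,s$ that lies inside $B([y,y'],\rho)$ but outside $B(\ell,\rho)$. The natural candidate is a slab of $\rho$-balls centred on the part of $[y,y']$ that is far from $\ell$: points there sit at the outer edge of the tube around $[y,y']$ and lie outside $B(\ell,\rho)$.

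First, locate the far part of the segment. The function $t\mapsto d(y+t(y'-y),\ell)$ is convex, since it is the norm of an affine function composed with the projection orthogonal to $\ell$. By assumption it attains its maximum $d(y',\ell)$ at $t=1$. Hence on the sub-segment $t\in[\tfrac12,1]$, whose length is $L/2$, the distance to $\ell$ is at least $\tfrac12 d(y',\ell)\ge s/2$. Call this sub-segment $S$.

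For each point $p\in S$, let $v$ be the unit vector from $\pi_\ell(p)$ to $p$, orthogonal to $\ell$. Consider the points $q=p+\tau v$ with $\tau\in(\rho-s/2,\rho]$. For these,
\[
d(q,\ell)=d(p,\ell)+\tau>\rho ,
\]
while $\|q-p\|\le\rho$. So every such $q$ lies in $B([y,y'],\rho)\setminus B(\ell,\rho)$.

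To get volume, I would thicken this set. Fix $p\in S$ and restrict to the cap
\[
\{q\in B(p,\rho):\langle q-p,v\rangle>\rho-d(p,\ell)\}.
\]
This cap lies outside $B(\ell,\rho)$, because the component of $q-\pi_\ell(p)$ along $v$ already exceeds $\rho$. It has height $\min(d(p,\ell),2\rho)\ge\min(s/2,2\rho)$. A ball cap of height $h\le\rho$ has volume comparable to $\rho^{(d-1)/2}h^{(d+1)/2}$, which is at least $c\rho^{d-2}h\min(\rho,h)$ up to constants.

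Taking the union of these caps as $p$ runs over $S$ should give volume of order $L\cdot(\text{cross-section})$. The cross-section of the cap perpendicular to the segment direction has $(d-1)$-volume of order $\rho^{d-2}\min(h,\rho)$ up to the geometric factor above. This yields the bound $c\rho^{d-2}\min(\rho,1)\,L\,s$ after using $h\gtrsim\min(s,\rho)$ and $s\le1$.

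The main obstacle is the union-over-$p$ step. The direction $v$ varies along $S$ (it is constant only when $[y,y']$ and $\ell$ are coplanar), and $[y,y']$ may also run nearly parallel to its own offset direction, so the caps could degenerate or overlap. I would handle this with a Fubini-type argument. Slice orthogonally to $[y,y']$ (adding $\kappa$-type terms when $L<\rho$, where a single cap already suffices since $L\rho^{d-2}\min(\rho,1)s\lesssim$ cap volume). In each orthogonal hyperplane through $p\in S$, take the $(d-1)$-dimensional disc $B(p,\rho)\cap(p+(y'-y)^\perp)$, which is contained in $B([y,y'],\rho)$. Then lower-bound the part of this disc outside $B(\ell,\rho)$. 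This is a disc of radius $\rho$ whose centre is at distance at least $s/2$ from the convex set $B(\ell,\rho)\cap\text{hyperplane}$, an ellipse-cylinder of width at least $\rho$. A supporting half-space argument shows the removed region misses a cap of the disc of height at least $\min(s/2,\rho)$, of $(d-1)$-volume at least $c\rho^{d-2}\min(\rho,1)s$. Integrating over $p\in S$ then gives the claim.
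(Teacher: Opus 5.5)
\textbf{There is a genuine gap, at the step you yourself flagged: the per-slice estimate.}

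A smaller issue first. Convexity of $t\mapsto d(y+t(y'-y),\ell)$ only gives upper bounds. The claim that this distance is $\ge\frac12 d(y',\ell)$ on $t\in[\frac12,1]$ is false: for $\ell=\R e_1$, $y=-e_2$, $y'=e_2$ the midpoint lies on $\ell$. This is easily repaired. The component of $y'-y$ orthogonal to $\ell$ has norm at most $d(y,\ell)+d(y',\ell)\le 2d(y',\ell)$, so the distance is $\ge (2t-1)d(y',\ell)$, and you can take $t\in[\frac34,1]$.

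The real problem is in the slices orthogonal to $[y,y']$.
\begin{itemize}
\item The section of $B(\ell,\rho)$ contains $p$ whenever $d(p,\ell)\le\rho$, which is the main regime. So $p$ is not at distance $s/2$ from that convex set.
\item The supporting half-space $\{x:\langle x-\pi_\ell(p),v\rangle\le\rho\}$ leaves a cap of the slice disc of height $\rho-(\rho-d(p,\ell))/\|v-\langle v,n\rangle n\|$, where $n$ is the unit direction of $[y,y']$. This height is $\le 0$ as soon as $|\langle v,n\rangle|\ge(2d(p,\ell)/\rho)^{1/2}$.
\item Even in the best case $[y,y']\parallel\ell$, where the height is $h=d(p,\ell)$, a cap of height $h\le\rho$ in a $(d-1)$-ball of radius $\rho$ has volume $\asymp\rho^{(d-2)/2}h^{d/2}$. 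The true crescent has volume $\asymp\rho^{d-2}h$, so you lose a factor $(h/\rho)^{(d-2)/2}$. For $d\ge3$ your integration gives at best $L\rho^{(d-2)/2}s^{d/2}$, which is not $\ge c\rho^{d-2}\min(\rho,1)Ls$ as $s\to0$.
\item Your single-cap shortcuts fail for the same reason. The inequality $\rho^{(d-1)/2}h^{(d+1)/2}\ge c\rho^{d-2}h\min(\rho,h)$ is false for $d\ge4$. The claim ``for $L<\rho$ one cap suffices'' fails in every dimension: take $[y,y']$ parallel to $\ell$ at distance $s$ with $L=\rho/2$. The left-hand side is $\asymp\rho^{d-1}s$, but one cap gives only $\asymp\rho^{(d-1)/2}s^{(d+1)/2}$.
\end{itemize}

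\textbf{The missing idea is to slice orthogonally to $\ell$, not to $[y,y']$.} This is what the paper does in its main case. In a hyperplane $H\perp\ell$ through $p\in[y,y']$:
\begin{itemize}
\item the section of $B(\ell,\rho)$ is exactly a $(d-1)$-ball of radius $\rho$;
\item the section of $B([y,y'],\rho)$ contains the $(d-1)$-ball of the same radius centred at $p$.
\end{itemize}
Two equal balls whose centres are $d(p,\ell)$ apart differ by a crescent whose thickness is spread over a whole hemisphere. Lemma~\ref{l:volume-difference-balls} in dimension $d-1$ bounds its volume below by $\frac12\kappa_{d-1}\rho^{d-2}\min(d(p,\ell),2\rho)$, which is precisely what a half-space cap misses. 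Integrating over the far subsegment proves the lemma whenever its projection onto $\ell$ has length $\gtrsim L$.

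Otherwise $[y,y']$ is nearly orthogonal to $\ell$, so $L\lesssim d(y',\ell)$ by the bound on the orthogonal component above.
\begin{itemize}
\item If $d(y',\ell)\ge 4\rho$, the $\rho$-tube of the far subsegment is disjoint from $B(\ell,\rho)$ and you are done.
\item If $d(y',\ell)\le4\rho$, the right-hand side is $\lesssim\rho^{d-2}\min(\rho,d(y',\ell))^2$. The same $\ell$-orthogonal slicing applied to $B(y',\rho)$ alone matches this. Use a window of width $c\min(\rho,d(y',\ell))$ around $y'$, where the section radii differ from $\rho$ by at most $c^2\min(\rho,d(y',\ell))$.
\end{itemize}
A single ball cannot do better than this: $\lambda_d(B(y',\rho)\setminus B(\ell,\rho))\asymp\rho^{d-3/2}d(y',\ell)^{3/2}$ when $d(y',\ell)\le\rho$. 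So the single-ball bound $\frac12\kappa_d\rho^{d-1}\min(d(y',\ell),2\rho)$ asserted in the paper's second case (all $\|y-y'\|\le 3\rho$) is not correct as written. A single ball is enough only when $L\lesssim d(y',\ell)$.
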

\begin{proof}
Without loss of generality, we may assume that $d([y,y'],\ell)\geq \frac13 d(y',\ell)$. Otherwise, we divide $[y,y']$ into three segments $[y,y_1]$, $[y_1,y_2]$ and $[y_2,y']$ of equal length and note that either $d([y,y_1],\ell)\geq \frac13 d(y',\ell)\geq\frac13 \max(d(y,\ell),d(y_1,\ell))$ or $d([y_2,y'],\ell)\geq \frac13 d(y',\ell)=\frac13\max(d(y_2,\ell),d(y',\ell))$. 

If $d(y',\ell)>3\rho$ then $d([\frac12(y+y'),y'],\ell)>2\rho$. Hence by Lemma~\ref{l:volume-cylinder}, 
\begin{eqnarray*}
\lambda_d\big(B([y,y'],\rho)\setminus B(\ell,\rho)\big)
&\geq 
&\lambda_d\big(B([\tfrac12(y+y'),y'],\rho)\geq \frac12\kappa_{d-1}\rho^{d-1}\|y-y'\|\\
&\geq &\frac12\kappa_{d-1}\rho^{d-1} \|y-y'\| \min\big(d(y',\ell),1\big).
\end{eqnarray*}
If $d(y',\ell)\leq 3\rho$ and $\|y-y'\|\leq 3\rho$, then, by Lemma~\ref{l:volume-difference-balls}, 
\begin{eqnarray*}
\lambda_d\big(B([y,y'],\rho)\setminus B(\ell,\rho)\big)
&\geq &\lambda_d\big(B(y',\rho)\setminus B(\ell,\rho)\big)
\geq 
\frac12\kappa_d\rho^{d-1}\min\big(d(y',\ell),2\rho\big)\\
&\geq &\frac13\kappa_d\rho^{d-1} \|y-y'\| \min\big(d(y',\ell),1\big).
\end{eqnarray*}

Finally, if $d(y',\ell)\leq 3\rho$ and $\|y-y'\|\geq 3\rho$, then the projection of $[y,y']$ on $\ell$ has length at least $\frac12\|y-y'\|$. 
Let $H_z$ be the hyperplane through $z$ orthogonal to $\ell$. Then, by Lemma~\ref{l:volume-difference-balls},
\begin{eqnarray*}
\lambda_d\big(B([y,y'],\rho)\setminus B(\ell,\rho)\big)
&\geq &\int_{[y,y']}\lambda_{d-1}\big(H_z\cap (B(z,\rho)\setminus B(\ell,\rho))\big)dz\\
&\geq &\big(\frac12\|y-y'\|\big)\big(\frac12\kappa_{d-1} \rho^{d-2} \min\big(\tfrac13 d(y',\ell),2\rho\big)\big)\\
&\geq &\frac{1}{12}\kappa_{d-1}\rho^{d-2} \|y-y'\| \min\big(d(y',\ell),1\big),
\end{eqnarray*}
as required.
\end{proof}
\begin{lemma}\label{l:volume-difference-balls}
Let $\rho>0$. For any $x,y\in \R^d$, 
\[
\lambda_d\big(B(x,\rho)\setminus B(y,\rho)\big)\geq \frac12\kappa_d \rho^{d-1} \min\big(\|x-y\|,2\rho\big).
\]
\end{lemma}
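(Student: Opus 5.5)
We need $\lambda_d(B(x,\rho)\setminus B(y,\rho))\ge \frac12\kappa_d\rho^{d-1}\min(\|x-y\|,2\rho)$. By translation, rotation and scaling we take $x=0$ and $y=te_1$ with $t=\|x-y\|\ge 0$. Both sides scale like $\rho^d$ after writing $t=\rho s$, so we may assume $\rho=1$; the claim becomes $\lambda_d(B(0,1)\setminus B(te_1,1))\ge \frac12\kappa_d\min(t,2)$. If $t\ge 2$ the balls overlap in at most a null set, so the left side equals $\kappa_d\ge\kappa_d=\frac12\kappa_d\cdot 2$. It therefore remains to treat $t\in[0,2]$.

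For $t\in[0,2]$ we set $g(t)=\lambda_d(B(0,1)\setminus B(te_1,1))$ and use a slicing argument along $e_1$. We fix $z\in\R^{d-1}$ with $\|z\|<1$ and put $a=\sqrt{1-\|z\|^2}$. The line $\{(u,z):u\in\R\}$ meets $B(0,1)$ in $[-a,a]$ and $B(te_1,1)$ in $[t-a,t+a]$. Hence the length of the part lying in the first ball but not the second is $\min(t,2a)$. By Fubini,
\[
g(t)=\int_{\{\|z\|<1\}}\min\big(t,2\sqrt{1-\|z\|^2}\big)\,dz .
\]
Since $t\le 2$, we have $\min(t,2a)\ge \frac t2\cdot 2a=ta$: if $t\le 2a$ this is $t\ge ta$ because $a\le 1$, and if $2a<t$ it is $2a\ge ta$ because $t\le 2$. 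Therefore
\[
g(t)\ge t\int_{\{\|z\|<1\}}\sqrt{1-\|z\|^2}\,dz=\frac t2\int_{\{\|z\|<1\}}2\sqrt{1-\|z\|^2}\,dz=\frac t2\,\kappa_d ,
\]
because the last integral is exactly the volume of the unit ball computed by slicing. This gives $g(t)\ge\frac12\kappa_d t=\frac12\kappa_d\min(t,2)$, as required.

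(Alternatively, $g$ is concave on $[0,2]$, being an integral of the concave functions $t\mapsto\min(t,2a)$, with $g(0)=0$ and $g(2)=\kappa_d$, so the chord bound gives the same result.) No step is a genuine obstacle; the only point needing care is the elementary inequality $\min(t,2a)\ge ta$ for $t\in[0,2]$, $a\in[0,1]$.
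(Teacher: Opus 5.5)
Your proof is correct, and it is essentially the paper's chord bound, but you establish the key convexity by a different route. Write $t=\|x-y\|$ and $V(\rho,h)$ for the volume of a spherical cap of height $h$. For $t\le 2\rho$ the lens is two caps, so $\lambda_d(B(x,\rho)\setminus B(y,\rho))=\kappa_d\rho^d-2V(\rho,\rho-t/2)$. Hence the paper's convexity of $h\mapsto V(\rho,h)$ on $[0,\rho]$, together with $V(\rho,0)=0$ and $V(\rho,\rho)=\frac12\kappa_d\rho^d$, is exactly your concavity of $g$ on $[0,2\rho]$ with $g(0)=0$ and $g(2\rho)=\kappa_d\rho^d$. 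The paper obtains this by slicing perpendicular to the line of centers, writing $V$ as an integral of $(d-1)$-dimensional ball volumes, and checking the sign of $\partial_h^2 V$. You slice parallel to the line of centers, where each fiber contributes the piecewise-linear function $\min(t,2a)$; concavity is then immediate, and the chord inequality $\min(t,2a)\ge ta$ already holds fiber by fiber. Your version avoids the cap-volume formula and any differentiation, so it is shorter and fully elementary. The paper's version keeps the familiar lens-as-two-caps picture, but uses nothing beyond the same chord bound.
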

\begin{proof}
We may assume that $x=(0,\ldots,0)$ and $y=(s,0,\ldots, 0)$ with $s>0$. If $s>2\rho$, then the balls do not overlap and the result is trivial. Let $s\in[0,2\rho]$. 

The volume of a spherical cap of height $h\in[0,\rho]$ and radius $\rho$ equals 
\[
V(\rho,h) = \kappa_{d-1}\int_{\rho-h}^\rho(\rho^2 - t^2)^{\frac{d-1}{2}}dt = \kappa_{d-1}\int_0^h (s(2\rho-s)^{\frac{d-1}{2}}ds.
\]
Since $V$ is a convex function in $h$, 
\[
\frac{\partial V}{\partial h} = \kappa_{d-1} \big(h(2\rho-h)\big)^{\frac{d-1}{2}},\quad 
\frac{\partial^2 V}{\partial h^2} = \frac{d-1}2 \big(h(2\rho-h)\big)^{\frac{d-3}{2}} 2(\rho-h)\geq 0,
\]
we have 
\[
V(\rho,h)\leq \frac{h}{\rho}V(\rho,\rho) = \frac12 h\rho^{d-1}\kappa_d.
\]
It remains to notice that for $h=\rho - \frac12\|x-y\|$, 
\[
\lambda_d\big(B(x,\rho)\setminus B(y,\rho)\big) = \kappa_d\rho^d - 2 V(\rho,h) \geq \kappa_d(\rho-h)\rho^{d-1} = \frac12\kappa_d\rho^{d-1}\|x-y\|,
\]
as required.
\end{proof}

\begin{lemma}\cite[Lemma~4.3]{MS-Visibility-AIHP}\label{l:vector-projections}
Let $d\geq 3$. Let $H_s$ be the hyperplane $\{z\in\R^d\,:\,z(s)=0\}$ and $\pi_s$ the orthogonal projection on $H_s$. For any vectors $x,y\in\R^d$ at angle $\varphi\in[0,\pi]$, there exists $s$ such that $\|\pi_s(x)\|\geq \frac1{\sqrt d}\|x\|$, $\|\pi_s(y)\|\geq \frac1{\sqrt d}\|y\|$ and $\sin\varphi_s\geq \frac1{\sqrt d}\sin\varphi$, where $\varphi_s\in[0,\pi]$ is the angle between the vectors $\pi_s(x)$ and $\pi_s(y)$. 
\end{lemma}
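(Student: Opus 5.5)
We normalise $\|x\|=\|y\|=1$; this changes neither angle. The plan is to work with the wedge product. Write $w_{ij}=x(i)y(j)-x(j)y(i)$ and $W=\sum_{i<j}w_{ij}^2$, so that $\sin^2\varphi = W$ by Lagrange's identity. Since $\pi_s$ simply deletes the $s$-th coordinate, Lagrange's identity in $H_s$ gives
\[
\|\pi_s(x)\|^2\|\pi_s(y)\|^2\sin^2\varphi_s = W_s := \sum_{i<j,\; i,j\neq s} w_{ij}^2 = W - D_s,\qquad D_s:=\sum_{j\neq s} w_{sj}^2 .
\]
Because $\|\pi_s(x)\|,\|\pi_s(y)\|\leq 1$, we get $\sin^2\varphi_s\geq W_s$. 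It therefore suffices to find an index $s$ with three properties: $x(s)^2\leq 1-\frac1d$, $y(s)^2\leq 1-\frac1d$, and $D_s\leq (1-\frac1d)W$. The first two give the norm bounds, and the third gives $\sin^2\varphi_s\geq \frac1d\sin^2\varphi$.

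The counting facts are as follows. Each pair $\{i,j\}$ is counted in exactly two of the $D_s$, so $\sum_s D_s=2W$. Call an index \emph{bad} if $x(s)^2>1-\frac1d$ or $y(s)^2>1-\frac1d$. Since $2(1-\frac1d)\geq 1$, at most one index is bad for $x$ and at most one is bad for $y$. Hence at most two indices are bad and at least $d-2\geq1$ are good.

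\emph{Case of no bad index.} Some $s$ has $D_s\leq 2W/d\leq (1-\frac1d)W$ when $d\geq 3$, and we are done.

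\emph{Case of a bad index.} Say $a$ is bad for $x$, so $x(a)^2>1-\frac1d$ and $\sum_{j\neq a}x(j)^2<\frac1d$. Then $x$ is nearly $\pm e_a$. For $j,k\neq a$ one has $|w_{jk}|\leq |x(j)|+|x(k)|$, which is small, whereas $w_{aj}\approx \pm y(j)$. So most of $W$ should be carried by pairs containing $a$, which makes $D_a$ large and $\sum_{s\neq a}D_s = 2W-D_a$ correspondingly small. For a good $s\neq a$ we would bound $W_s$ from below directly by $\sum_{j\neq a,s}w_{aj}^2$. We would then pick $s$ among the good indices different from $a$ to maximise $\sum_{j\neq a,s} w_{aj}^2$, i.e.\ to remove the smallest term $w_{as}^2$. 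Averaging over the at least $d-2$ available indices should give at least a fraction $\frac{d-3}{d-2}$-type share of $D_a$, plus the contribution of the pairs $w_{jk}$ with $j,k\neq a$.

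\emph{Main obstacle.} The delicate configuration is $d=3$ with two distinct bad indices $a$ (for $x$) and $b$ (for $y$). Then only one good index $s$ remains, and $W_s=w_{ab}^2$ must be compared with $W$ by hand. Here $x\approx \pm e_a$ and $y\approx\pm e_b$, so $|w_{ab}|\approx 1$, and quantitatively $w_{ab}^2\geq (x(a)y(b)-x(b)y(a))^2\geq (1-\frac1d-\frac1d)^2$. I would check that this is $\geq \frac1d\geq \frac1d W$; for $d=3$ it reads $\frac19<\frac13$, which fails. So this sub-case needs a sharper argument. I would first try exploiting that the two bad coordinates force $\varphi$ close to $\frac\pi2$, using $W\leq 1$ together with the explicit smallness of the third coordinates. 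If the constant $\frac1{\sqrt d}$ does not come out exactly, I would track the precise thresholds: replace $1-\frac1d$ by the optimal cutoff and verify that the three requirements remain compatible. I expect this low-dimensional two-bad-index bookkeeping, not the averaging argument, to be where the real work lies.
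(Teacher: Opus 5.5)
Your reduction is correct as far as it goes. With $\|x\|=\|y\|=1$, Lagrange's identity, the bound $\sin^2\varphi_s\geq W_s=W-D_s$, the identity $\sum_s D_s=2W$ and the no-bad-index case are all fine. But the proposal does not prove the lemma. The bad-index case is only heuristic (``most of $W$ should be carried by pairs containing $a$''). The averaging you describe gives a $\frac{d-3}{d-2}$ share, which is nothing when $d=3$. In the $d=3$ configuration with two distinct bad indices, you find yourself that $w_{ab}^2\geq(1-\frac2d)^2$ is too weak, and you leave it open. That is a genuine gap.

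The missing idea is that a bad index can never minimise $D_s$ when $W>0$, so no case analysis is needed. Expanding,
\[
D_s=\sum_{j}\big(x(s)y(j)-x(j)y(s)\big)^2=x(s)^2+y(s)^2-2x(s)y(s)\cos\varphi,
\qquad\text{so}\qquad
D_s-x(s)^2\sin^2\varphi=\big(y(s)-x(s)\cos\varphi\big)^2\geq 0.
\]
Thus $D_s\geq x(s)^2W$, and symmetrically $D_s\geq y(s)^2W$. Geometrically, $D_s=W\|Pe_s\|^2$ with $P$ the orthogonal projection onto $\mathrm{span}(x,y)$, and $|x(s)|=|\langle x,Pe_s\rangle|\leq\|Pe_s\|$.

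So if $W>0$ and $s$ minimises $D_s$, then $D_s\leq\frac2dW$ gives $x(s)^2,y(s)^2\leq\frac2d\leq1-\frac1d$. It also gives $\sin^2\varphi_s\geq W-D_s\geq(1-\frac2d)W\geq\frac1dW$, using $d\geq3$ throughout. If $W=0$, the sine condition is vacuous, and your count of at most two bad indices produces a good $s$.

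In your problematic $d=3$ configuration, $W>0$ gives $D_a,D_b>\frac23W$. Hence $D_c<\frac23W$ and $w_{ab}^2=W_c>\frac13W$. The failure you observed comes from the lossy lower bound on $|w_{ab}|$, which throws away how $W$ is distributed, not from a real obstruction.
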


\section{Proof of the upper bound}\label{sec:upper-bound}
In this section, we prove the upper bound in Theorem~\ref{thm:main}. 

\subsection{Union bound}
There is a covering of $\partial B(r)$ by at most $C\big(\tfrac{r}{\delta_r}\big)^{d-1}$ balls of radius $\delta_r$ centered on $\partial B(r)$ for some $C=C(d)$. 
If $\partial B(r)$ is visible from $0$ via $N$ points, then one of the balls in the covering is visible from $0$ via $N$ points in $B(r)$. 
Thus, by rotational invariance of $\mathcal C$, 
\[
P_{\mathrm{vis}}^N(r) 
\leq C\big(\tfrac{r}{\delta_r}\big)^{d-1}\,
\mathsf P\big[\text{$B(re_1,\delta_r)$ is visible from $0$ via $N$ points in $B(r)$}\big]
\]
Furthermore, 
\begin{multline*}
\mathsf P\big[\text{$B(re_1,\delta_r)$ is visible from $0$ via $N$ points in $B(r)$}\big]\leq \mathsf P\big[\text{$B(re_1,\delta_r)$ is visible from $0$}\big]\\
+ \sum_{\kappa=0}^\infty
\mathsf P\Big[
\begin{array}{c}
\text{$B(re_1,\delta_r)$ is visible from $0$ via $N$ points in $B(r)\cap B(\R e_1,(\kappa+1)h_r)$}\\
\text{but not visible via $N$ points in $B(r)\cap B(\R e_1,\kappa h_r)$}
\end{array}
\Big]. 
\end{multline*}
By \cite{MS-Visibility-AIHP}, 
$\mathsf P\big[\text{$B(re_1,\delta_r)$ is visible from $0$}\big]\leq Cf(r)$. To bound the probability in the sum, note that there is a covering of $B(r)\cap B(\R e_1,(\kappa+1) h_r)$ by at most 
\[
C\frac{r((\kappa+1) h_r)^{d-1}}{\beta_r \delta_r^{d-1}}
\]
cylinders $B([o, o+\beta_r e_1],\delta_r)$ with $o\in B(r)$, and if $B(re_1,\delta_r)$ is visible from $0$ via $N$ points in $B(r)\cap B(\R e_1,(\kappa+1) h_r)$ but not in $B(r)\cap B(\R e_1,\kappa h_r)$, then there exist $N$ cylinders $C_1,\ldots, C_N$ in the covering such that 
the event $\mathrm{vis}\big(0,C_1,\ldots, C_N, B(re_1,\delta_r)\big)$ occurs and at least one of the cyinders is not contained in $B(\R e_1,\kappa h_r)$. 
Thus, the probability in the sum is bounded from above by 
\[
C\Big(\frac{r((\kappa+1) h_r)^{d-1}}{\beta_r \delta_r^{d-1}}\Big)^N
\max_{C_1,\ldots, C_N} \mathsf P\big[\mathrm{vis}\big(0,C_1,\ldots, C_N, B(re_1,\delta_r)\big)\big],
\]
where the maximum is over cylinders $C_1,\ldots, C_N$ from the covering of $B(r)\cap B(\R e_1,(\kappa+1) h_r)$ at least one of which is not contained in $B(r)\cap B(\R e_1,\kappa h_r)$. Therefore, the result will follow if we prove 
\begin{lemma}\label{l:visibility-upper-main}
There exist positive constants $c=c(d,N,\mathrm{law}(\mathcal C))$ and $C=C(d,N,\mathrm{law}(\mathcal C))$ such that for any 
$\kappa\in \N_0$ and $o_1,\ldots, o_N\in B(r)$ not all contained in $B(\R e_1,\kappa h_r)$, if $C_i = B([o_i, o_i+\beta_r e_1],\delta_r)$, for $1\leq i\leq N$, then 
\begin{equation}\label{eq:visibility-upper-main}
\mathsf P\big[\mathrm{vis}\big(0,C_1,\ldots, C_N, B(re_1,\delta_r)\big)\big] \leq 
C e^{-c\kappa} f(r).
\end{equation}
\end{lemma}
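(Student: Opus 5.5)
Following the outline in the introduction, I would split according to the displacement: \emph{near} tuples, where all $o_i$ lie in $B(\R e_1,(\kappa_0+1)h_r)$ for a fixed large constant $\kappa_0$, and \emph{far} tuples. The bound only has to hold up to constants depending on $N$ and the law of $\mathcal C$, so for $\kappa\le\kappa_0$ it is enough to prove $\mathsf P[\mathrm{vis}(0,C_1,\ldots,C_N,B(re_1,\delta_r))]\le Cf(r)$ uniformly in $o_1,\ldots,o_N\in B(r)$. For $\kappa>\kappa_0$ I would prove an exponential gain. I would split this into three general properties, Lemmas~\ref{l:upper-bound-1}--\ref{l:upper-bound-3}, and then check each one model by model using the explicit formulas \eqref{eq:BM-mu}, \eqref{eq:PC-mu-rho} and \eqref{eq:BI-capacity-rho}.

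\textbf{Step 1 (discretization).} The event $\mathrm{vis}(0,C_1,\ldots,C_N,B(re_1,\delta_r))$ is the union over $x_i\in C_i$ and $x_{N+1}\in B(re_1,\delta_r)$ of $\{L\subseteq\mathcal V\}$, where $L=\bigcup_i[x_{i-1},x_i]$. I would take a finite net of such polygonal paths whose cardinality depends only on $N$ and $d$. I would then show, as in \cite{MS-Visibility-AIHP} for $N=0$, that on the event some path is vacant, a fixed ``thickened'' version of one of the net paths is vacant up to a boundedly small set of obstacles. The thickening is at scale $\delta_r$ transversally and $\beta_r$ along $e_1$. The choice $\beta_r=r\delta_r/h_r$ is made exactly so that moving $x_i$ by $\beta_r$ along $e_1$ tilts the adjacent segments by angle $O(\delta_r/r)$ at scale $h_r$. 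This reduces everything to bounding $\mathsf P[L\subseteq\mathcal V]$, up to a factor $e^{C}$, for a single path $L$ with $x_i=o_i$ (Lemmas~\ref{l:upper-bound-1}, \ref{l:upper-bound-2}).

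\textbf{Step 2 (near paths).} I need to show that $\mathsf P[L\subseteq\mathcal V]\le Cf(r)$ whenever the path ends in $B(re_1,\delta_r)$. Since $\mathsf P[L\subseteq\mathcal V]=\exp(-\alpha\Phi(B(L,\rho)))$, this amounts to $\Phi(B(L,\rho))\ge\Phi(B([0,re_1],\rho))-C$, where $\Phi$ is the expected volume, the mean projected volume, or the capacity.
\begin{itemize}
\item For volume and projected volume, I would project $L$ onto the $e_1$-axis. The segments cover $[0,r-\delta_r]$, so the neighborhood's volume is at least that of the straight cylinder minus overlaps at the $N$ corners. I would control these overlaps with Lemma~\ref{l:intersection-two-lines} and Lemma~\ref{l:volume-cylinder}, together with the fact that backtracking only adds volume.
\item For capacity, I would use monotonicity and the variational formula \eqref{eq:capacity-variational}. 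Placing the normalized equilibrium measure of the straight tube, transported along $L$, with Lemma~\ref{l:equilibrium-measure-cylinder} controlling its mass on pieces, gives a Green energy at most that of the straight tube plus $O(1/\mathrm{cap}^2)$.
\end{itemize}

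\textbf{Step 3 (far paths, Lemma~\ref{l:upper-bound-3}).} If some $o_i$ is at distance at least $\kappa h_r$ from $\R e_1$, the path has length at least $r+c\kappa^2h_r^2/r$ when $\kappa h_r\le r$. For Boolean models and cylinders, this excess length directly increases the volume of the $\rho$-neighborhood by $c\min(\kappa^2,\kappa h_r)$, via Lemma~\ref{l:volume-difference}, giving the factor $e^{-c\kappa}$ (in fact a better one).

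For the Brownian interlacements, excess length only converts to excess capacity with a loss. In $d\ge4$, capacity is roughly additive in length by \eqref{eq:capacity-asymptotics}. In $d=3$, a tube of length $\ell$ has capacity about $\ell/\log\ell$, so an excess of $\kappa^2\log r$ produces capacity gain about $\kappa^2$, precisely because $h_r=\sqrt{r\log r}$ (compare Lemma~\ref{l:capacity-cylinder-d3}). The segment bent away from the axis must be handled separately when the displacement is comparable to $r$. I expect this capacity comparison to be the main obstacle: establishing a lower bound $\mathrm{cap}(B(L,\rho))\ge\mathrm{cap}(B([0,re_1],\rho))+c\kappa$ for bent tubes. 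My first attempt would be to decompose hitting probabilities from $\mu_R$ by the first-entered piece, as in the proof of Lemma~\ref{l:equilibrium-measure-cylinder}, and to use \eqref{eq:hitting-probability-capacity} to show that pieces displaced by $\kappa h_r$ are hit by Brownian paths largely avoiding the straight tube.

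Summing over the finite net then yields \eqref{eq:visibility-upper-main}.
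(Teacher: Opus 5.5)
\textbf{There is a genuine gap: the $\kappa$-dependence of your Step 1.} You claim that a net of fixed cardinality, thickened by $\delta_r$ transversally, reduces the event to the single path through $o_1,\dots,o_N$ at cost $e^{C}$, uniformly in $\kappa$. This is false for large $\kappa$.

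If $o_i$ is at distance $\asymp\kappa h_r$ from $\R e_1$, then $L_i=[o_{i-1},o_i]$ is tilted by an angle $\asymp\kappa h_r/|L_i|$. So $C_i$, of length $\beta_r$ along $e_1$, spreads transversally to $L_i^\infty$ by about $\beta_r\kappa h_r/|L_i|\asymp\kappa\,\delta_{|L_i|}$. Your own heuristic gives a tilt $O(\kappa\delta_r/r)$ at scale $\kappa h_r$, not $O(\delta_r/r)$. In fact $C_1,\dots,C_N$ carry of order $\kappa$ polygonal paths at mutual distance $\gtrsim\delta_r$, whose vacancy events nearly decorrelate. So the ratio to $\mathsf P[L\subseteq\mathcal V]$ genuinely grows with $\kappa$.

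In the paper this is Lemma~\ref{l:qi-properties}(b), $q_i\le 10(\kappa+1)\delta_{|L_i|}$. Accordingly, the thickening step Lemma~\ref{l:upper-bound-1} costs $e^{C(\kappa+1)}$, not $e^{C}$. With that loss, your target for Brownian interlacements,
\[
\mathrm{cap}(B(L,\rho))\ge \mathrm{cap}(B([0,re_1],\rho))+c\kappa ,
\]
does not close the argument: $e^{C(\kappa+1)}e^{-c\kappa}$ need not decay, since nothing forces $c>C$. The paper instead proves the superlinear gain $c\kappa(\kappa\wedge\log\log r)-C$ in Lemma~\ref{l:upper-bound-3}, which absorbs $e^{C(\kappa+1)}$ for large $r$. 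Alternatively, you could cut each $C_i$ into $\kappa+1$ cylinders of length $\beta_r/(\kappa+1)$. Then each piece costs only $e^{C}$, the union bound costs $(\kappa+1)^N$, and a linear gain would suffice. But that is not the net you describe.

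\textbf{The Brownian interlacement comparison is also left without a working method.} You correctly flag it as the main obstacle, but the proposed tools do not reach it.
\begin{itemize}
\item The sets $B(L,\rho)$ and $B([0,re_1],\rho)$ are not nested.
\item The $o(1)$ in \eqref{eq:capacity-asymptotics} hides errors far larger than $\kappa^2$, so capacity is not ``roughly additive in length'' at this precision, even in $d\ge4$.
\item A first-entrance decomposition of $\mathsf P_{\mu_R}$ does not control the sign of a difference of capacities. Both one-sided terms are much larger than the difference you need.
\end{itemize}

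The missing idea is the one you already use in Step 2, made quantitative. Split the straight tube into disjoint slabs $A_i$, matched to subsegments with disjoint $e_1$-projections. Push the normalized equilibrium measure $\mu_0$ of $\bigcup A_i$ forward onto the bent tube by a piecewise stretch-and-rotate map $g$; this $g$ is a dilation (Lemma~\ref{l:gx-gy-case1}). By \eqref{eq:capacity-variational} it then suffices to show
\[
I(\mu_0)-I(\mu_1)\ge c\kappa(\kappa\wedge\log\log r)\,I(\mu_0)^2 .
\]
This gain comes from the stretched slab via Lemma~\ref{l:equilibrium-measure-cylinder}, and it is exactly where $h_r=\sqrt{r\log r}$ in $d=3$ enters.

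When all displaced segments are steep, that slab carries too little equilibrium mass. It must then be enlarged by $\Delta=\kappa\log^5 r$, after which $g$ is no longer a dilation. The cross terms then have to be controlled separately, using Lemma~\ref{l:gx-gy-case2} and the split at $\|x-y\|=r^{4/5}$.
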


Indeed, by \eqref{eq:visibility-upper-main}, 
\begin{eqnarray*}
P_{\mathrm{vis}}^N(r) 
&\leq &C\Big(\frac{r}{\delta_r}\Big)^{d-1}f(r)  + C\Big(\frac{r}{\delta_r}\Big)^{d-1}\Big(\frac{r h_r^{d-1}}{\beta_r \delta_r^{d-1}}\Big)^N f(r) \sum_{\kappa=1}^\infty (\kappa+1)^{N(d-1)} e^{-c\kappa}\\
&\leq &C\Big(\frac{r}{\delta_r}\Big)^{d-1}\Big(\frac{r h_r^{d-1}}{\beta_r \delta_r^{d-1}}\Big)^N f(r),
\end{eqnarray*}
and the result follows from the definition of the scales $\delta_r$, $\beta_r$ and $h_r$. 
It remains to prove Lemma~\ref{l:visibility-upper-main}. 

\subsection{Proof of Lemma~\ref{l:visibility-upper-main}}
In this section, we reduce Lemma~\ref{l:visibility-upper-main} to some general statements about visibility probabilities, which will then be proven for each of the three models separately. The arguments in this section apply to all three models. 

\smallskip

Fix cylinders $C_1,\ldots, C_N$ as in Lemma~\ref{l:visibility-upper-main} and assume, without loss of generality, that $o_1,\ldots, o_N$ are contained in $B(\R e_1, (\kappa+1)h_r)$. We also write $C_0 = \{0\}$ and $C_{N+1} = B(re_1,\delta_r)$, $o_0 = 0$ and $o_{N+1} = re_1$. We denote by $L_i$ the line segment $[o_{i-1},o_i]$. Let 
\[
I = \{1\leq i\leq N+1\,:\, |L_i|\geq 2\}.
\]
For each $i\in I$, let $p_i$ be the point on $L_i$ at distance $1$ from $o_{i-1}$ and $p_i'$ the point on $L_i$ at distance $1$ from $o_i$, and let $L_i'$ be the line segment $[p_i,p_i']$.
Let 
\[
q_i = d(o_i + \beta_re_1, L_i^\infty) + \delta_r,\,\,1\leq i\leq N+1,
\]
where $L_i^\infty$ is the line containing $L_i$. 
The following lemma collects key properties of $q_i$, in particular, its second statement gives the key relation between the scales $\delta_r$, $\beta_r$ and $h_r$. 
\begin{lemma}\label{l:qi-properties}
The following statements hold for all $\kappa\geq 0$ and $r\geq r_0(d)$: 
\begin{itemize}
\item[(a)]
For $i\in I$, every line segment connecting $C_{i-1}$ and $C_i$ intersects $B(p_i,q_i)$ and $B(p_i', q_i)$. In particular, if $C_i$ is visible from $C_{i-1}$ then $B(p_i',q_i)$ is visible from $B(p_i,q_i)$. 
\item[(b)] 
For $1\leq i\leq N+1$, 
\[
\frac{q_i}{\delta_{|L_i|}} \leq 10(\kappa+1). 
\]
\end{itemize}
\end{lemma}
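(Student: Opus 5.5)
The plan is to express both parts in terms of the angle $\theta_i\in[0,\frac\pi2]$ between the segment $L_i$ and the axis $\R e_1$. Since $o_{i-1}$ and $o_i$ both lie on $L_i^\infty$, we have
\[
q_i = \beta_r\sin\theta_i + \delta_r .
\]
The one observation used throughout is the following. Every point of $C_i$ has the form $o_i+te_1+w$ with $t\in[0,\beta_r]$ and $\|w\|\leq\delta_r$. Its distance to $L_i^\infty$ is therefore at most $t\sin\theta_i+\delta_r\leq q_i$. The same computation applies to $C_{i-1}$, with $o_{i-1}\in L_i^\infty$ playing the role of $o_i$; the direction of the line is unchanged, so the angle is again $\theta_i$. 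The degenerate sets $C_0=\{0\}$ and $C_{N+1}=B(re_1,\delta_r)$ are covered trivially. Hence $C_{i-1}\cup C_i\subseteq B(L_i^\infty,q_i)$. Moreover, both sets lie within distance $\beta_r+\delta_r<1$ of $o_{i-1}$ resp.\ $o_i$, once $r\geq r_0$.

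\textbf{Part (a).} Let $a\in C_{i-1}$ and $b\in C_i$, and let $\Pi$ be the hyperplane through $p_i$ orthogonal to $L_i$. Project along the direction of $L_i$. The projection of $a$ lies within distance $<1$ of $o_{i-1}$. Since $|L_i|\geq2$, the projection of $b$ lies beyond $p_i$ as seen from $o_{i-1}$. So $[a,b]$ crosses $\Pi$ at some point $z$. The tube $B(L_i^\infty,q_i)$ is convex and contains $a$ and $b$, so $d(z,L_i^\infty)\leq q_i$. The orthogonal projection of $z$ onto $L_i^\infty$ is $p_i$, hence $\|z-p_i\|\leq q_i$. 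The argument for $p_i'$ is symmetric. If $C_i$ is visible from $C_{i-1}$ through a segment $[a,b]\subseteq\mathcal V$, then the subsegment between the two crossing points lies in $\mathcal V$ and connects $B(p_i,q_i)$ with $B(p_i',q_i)$. This gives the ``in particular'' statement.

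\textbf{Part (b).} The component of $o_i-o_{i-1}$ orthogonal to $e_1$ has norm at most $2(\kappa+1)h_r$, by the standing assumption $o_j\in B(\R e_1,(\kappa+1)h_r)$. Hence
\[
q_i\leq \min\Big(\beta_r,\ \frac{2(\kappa+1)h_r\beta_r}{|L_i|}\Big)+\delta_r
=\min\Big(\beta_r,\ \frac{2(\kappa+1)r\delta_r}{|L_i|}\Big)+\delta_r .
\]
We also use $|L_i|\leq 2r$.

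In case (a) of the scales we have $r\delta_r=1$. Using the second term and $\delta_r=\frac1r\leq\frac{2}{|L_i|}$, we get $q_i\leq 4(\kappa+1)/|L_i|=4(\kappa+1)\delta_{|L_i|}$.

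In case (b) ($d=3$, Brownian interlacements) I split according to $|L_i|$:
\begin{itemize}
\item If $|L_i|\geq h_r$, the second term gives the ratio $(\log r)^2/(\log|L_i|)^2\leq 4$, because $|L_i|\geq\sqrt r$.
\item If $r^{1/4}\leq|L_i|\leq h_r$, the first term gives the ratio
\[
\frac{\beta_r|L_i|}{(\log|L_i|)^2}\leq \frac{\beta_rh_r}{(\log|L_i|)^2}=\frac{(\log r)^2}{(\log|L_i|)^2}\leq 16 .
\]
\item If $|L_i|\leq r^{1/4}$, then $\beta_r|L_i|\leq(\log r)^{3/2}r^{-1/4}$, which is tiny for $r\geq r_0$.
\end{itemize}
In each case the additive term $\delta_r$ is harmless, because $s\mapsto\delta_s$ is decreasing for large $s$.

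The main obstacle I expect is exactly this bookkeeping in $d=3$. The naive bound through $r\delta_r/|L_i|$ alone breaks down for intermediate lengths, so the split at $|L_i|\approx h_r$ is needed. The constants $4$ and $16$ must also be kept under the stated factor $10(\kappa+1)$: in the middle regime I would use $\kappa+1\geq1$ and, if necessary, tighten the lower cutoff to about $r^{1/3}$, so that the ratio $(\log r)^2/(\log|L_i|)^2$ is at most $9$.

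A remaining issue is very short segments, for which $\delta_s$ is given by a formula only meaningful for large $s$. For these I would interpret $\delta_s$ as bounded below by a positive constant, so that the claim follows from $q_i\leq\beta_r+\delta_r\to0$ once $r\geq r_0$.
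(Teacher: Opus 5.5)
Your proposal is correct and follows essentially the paper's argument. Part (a) rests on the tube containment $C_{i-1}\cup C_i\subseteq B(L_i^\infty,q_i)$, and part (b) on the bound $q_i\le\min\bigl(\beta_r,2(\kappa+1)h_r\beta_r/|L_i|\bigr)+\delta_r$ followed by a case split in $|L_i|$.

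The differences are cosmetic. The paper splits at $|L_i|=2(\kappa+1)h_r$ and uses the monotonicity of $x\mapsto x/(\log x)^2$ in place of your extra cutoff at a power of $r$. It also controls the additive term via $\delta_r/\delta_{|L_i|}\le\delta_r/\delta_{2r}\le 2$. This is what you actually need when $|L_i|\in(r,2r]$, where $\delta_r>\delta_{|L_i|}$, so your ``decreasing'' remark alone does not cover that range; your $8(\kappa+1)$ bound leaves enough room for the extra $2$.
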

\begin{proof}[Proof of Lemma~\ref{l:qi-properties}]
Note that $B(p_i,q_i)\cap C_{i-1} = \emptyset$, $B(p_i',q_i)\cap C_i=\emptyset$, and for all $x\in C_{i-1}\cup C_i$, 
\[
d(x,L_i^\infty) \leq d(o_i+\beta_re_1,L_i^\infty) + \delta_r = q_i.
\]
Thus, any line segment $[x,y]$ with $x\in C_{i-1}$ and $y\in C_i$ intersects both $B(p_i,q_i)$ and $B(p_i',q_i)$, and statement (a) follows. 

\smallskip

To prove (b), notice that by the similarity of triangles and the fact that $L_i$ is contained in $B(\R e_1, (\kappa+1)h_r)$, 
\[
\frac{d(o_i + \beta_re_1, L_i^\infty)}{\beta_r} \leq  \frac{2(\kappa+1)h_r}{|L_i|}.
\]
Since also $d(o_i + \beta_re_1, L_i^\infty)\leq\beta_r$, we obtain that
\[
d(o_i + \beta_re_1, L_i^\infty)\leq \min\Big(\beta_r, \frac{2(\kappa+1)h_r\beta_r}{|L_i|}\Big).
\]
If $|L_i|\geq 2(\kappa+1)h_r$, then in both cases \eqref{eq:delta-beta-h-1} and \eqref{eq:delta-beta-h-2}, 
\[
\frac{d(o_i + \beta_re_1, L_i^\infty)}{\delta_{|L_i|}}
\leq \frac{2(\kappa+1)h_r\beta_r}{|L_i|\delta_{|L_i|}}\leq 8(\kappa+1).
\]
If $|L_i|\leq 2(\kappa+1)h_r$, then in both cases \eqref{eq:delta-beta-h-1} and \eqref{eq:delta-beta-h-2}, 
\[
\frac{d(o_i + \beta_re_1, L_i^\infty)}{\delta_{|L_i|}}
\leq \frac{\beta_r}{\delta_{|L_i|}}\leq 8(\kappa+1),
\]
where in the case \eqref{eq:delta-beta-h-2} we have also used the monotonicity of the function $x\mapsto \frac{x}{(\log x)^2}$. It remains to notice that $\delta_r/\delta_{|L_i|}\leq \delta_r/\delta_{2r}\leq 2$, since $|L_i|\leq 2r$. 
\end{proof}

We proceed with the proof of Lemma~\ref{l:visibility-upper-main}.
By Lemma~\ref{l:qi-properties}(a), 
\[
\mathsf P\big[\mathrm{vis}\big(0,C_1,\ldots, C_N, B(re_1,\delta_r)\big)\big]
\leq 
\mathsf P\Big[\bigcap_{i\in I}\mathrm{vis}\big(B(p_i,q_i),B(p_i',q_i)\big)\Big].
\]
Lemma~\ref{l:visibility-upper-main} immediately follows from the following lemmas. 
\begin{lemma}\label{l:upper-bound-1}
There exists $C=C(d,N,\mathrm{law}(\mathcal C))$ such that for any $\kappa\in \N_0$ and any 
$o_1,\ldots, o_N\in B(r)\cap B(\R e_1,(\kappa+1) h_r)$,
\[
\mathsf P\Big[\bigcap_{i\in I}\mathrm{vis}\big(B(p_i,q_i),B(p_i',q_i)\big)\Big]
\leq 
e^{C(\kappa+1)}\,
\mathsf P\Big[\bigcap_{i\in I}\mathrm{vis}\big(p_i,p_i'\big)\Big].
\]
\end{lemma}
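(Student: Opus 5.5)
Since the statement must be checked model by model, I would first reduce it to a comparison of the hitting functionals in \eqref{eq:BM-mu}, \eqref{eq:PC-mu-rho} and \eqref{eq:BI-capacity-rho}, one model at a time.

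\emph{Discretisation.} Cover each ball $B(p_i,q_i)$ and $B(p_i',q_i)$ by at most $C(q_i/\delta_{|L_i|})^d\leq C(\kappa+1)^d$ balls of radius $\delta_{|L_i|}$, using Lemma~\ref{l:qi-properties}(b). Whenever $\mathrm{vis}(B(p_i,q_i),B(p_i',q_i))$ occurs, some segment $[z_i,z_i']\subseteq\mathcal V$ exists, with $z_i$ in some small ball $B(a_i,\delta_{|L_i|})$ and $z_i'$ in some small ball $B(a_i',\delta_{|L_i|})$. A union bound over all choices $(a_i,a_i')_{i\in I}$ then costs at most a factor $(C(\kappa+1))^{2d(N+1)}\leq e^{C(\kappa+1)}$.

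This leaves two comparisons, each to be shown for fixed centres $a_i,a_i'$ with $\|a_i-p_i\|,\|a_i'-p_i'\|\leq q_i$.

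\emph{Step 1 (small balls to segments).} The probability that $B(a_i',\delta)$ is visible from $B(a_i,\delta)$ simultaneously for all $i\in I$ should be at most $C\,\mathsf P[\bigcup_{i\in I}[a_i,a_i']\subseteq\mathcal V]$. This is the multi-segment analogue of the window estimate from \cite{MS-Visibility-AIHP}, which states that a $\delta_r$-window is visible with probability comparable to $f(r)$. I would reprove it by the same method used there, or invoke it if the earlier argument is robust to several segments. The segments $L_i'$ are separated by the unit distance near their endpoints, and any overlap between distinct $i$ only helps, since the events are increasing in $\mathcal V$.

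\emph{Step 2 (tilted segments to the original segments).} Next I would show
\[
\mathsf P\Big[\bigcup_{i\in I}[a_i,a_i']\subseteq\mathcal V\Big]\leq e^{C(\kappa+1)}\,\mathsf P\Big[\bigcup_{i\in I}[p_i,p_i']\subseteq\mathcal V\Big].
\]
By \eqref{eq:BM-mu}, \eqref{eq:PC-mu-rho} and \eqref{eq:BI-capacity-rho}, this amounts to bounding the difference of the functionals (volume, projected volume, capacity) of the $\rho$-neighbourhoods by $C(\kappa+1)$. By monotonicity, it suffices to bound the functional of $B(\bigcup_i[p_i,p_i']\cup[a_i,a_i'],\rho)$ minus that of $B(\bigcup_i[p_i,p_i'],\rho)$.

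Each $[a_i,a_i']$ lies within distance $q_i\leq 10(\kappa+1)\delta_{|L_i|}$ of $L_i'$ along its whole length $|L_i|$. For volumes, this gives an excess of order $|L_i|\,q_i\leq C(\kappa+1)$ by Lemma~\ref{l:volume-cylinder} and a direct tube computation, since $\delta_{|L_i|}=1/|L_i|$. The Poisson cylinder case is the same, carried out on each projected hyperplane. For capacity, I would use subadditivity together with \eqref{eq:hitting-probability-capacity}. The added set lies in the $q_i$-neighbourhood of the $\rho$-tube, and a thin shell of relative thickness $q_i/\rho$ over length $|L_i|$ should add capacity of order $(q_i/\rho)\,\mathrm{cap}$ of the tube. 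In $d\geq4$ this is $C(\kappa+1)|L_i|\delta_{|L_i|}=C(\kappa+1)$. In $d=3$ it is $q_i|L_i|/\log|L_i|\leq C(\kappa+1)\log|L_i|$, because $\delta=(\log r)^2/r$. That bound is too large, so the scaling estimate has to be improved there.

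\emph{Main obstacle.} I expect the $d=3$ Brownian interlacement capacity comparison to be the main obstacle. My first attempt would be to write the excess via the variational formula \eqref{eq:capacity-variational}, using Lemma~\ref{l:equilibrium-measure-cylinder} to control the equilibrium measure, so that the shift by $q_i$ costs only $O(q_i|L_i|/(\log |L_i|)^2)=O(\kappa+1)$. The square-of-log gain would come from the capacity being a reciprocal of an energy of order $\log|L_i|/|L_i|$.
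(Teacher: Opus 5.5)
\textbf{Gap: the three-dimensional Brownian interlacements.} Your scheme can be completed for the Poisson--Boolean and Poisson cylinder models, and for Brownian interlacements in $d\geq 4$. In the last case, $B([0,\ell e_1],\rho+q)\subseteq\frac{\rho+q}{\rho}B([0,\ell e_1],\rho)$ and the scaling of capacity give an excess $\leq Cq\ell\leq C(\kappa+1)$. The lemma, however, must also cover Brownian interlacements in $d=3$, and there your argument has a genuine gap.

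In $d=3$ everything hinges on one estimate: changing the radius of a tube around a segment of length $\ell$ by $q\leq 10(\kappa+1)\delta_\ell$, with $\delta_\ell=(\log\ell)^2/\ell$, changes its capacity by only $O(\kappa+1)$. You do not prove this. You correctly note that scaling only gives $O((\kappa+1)\log\ell)$, but your variational sketch does not recover the missing logarithm. Suppose you push the equilibrium measure of the thicker tube onto the thinner one by a transverse contraction. Pairs at axial distance $\geq 1$ change the normalized energy by $O(q/\ell)$, and the slab bounds of Lemma~\ref{l:equilibrium-measure-cylinder} do control these. Near-diagonal pairs, however, change by a relative amount of order $q$, and Lemma~\ref{l:equilibrium-measure-cylinder} says nothing about their energy. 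The only bound at hand is the total energy $\asymp\log\ell/\ell$, which loses exactly the factor $\log\ell$. Closing this would require a local regularity estimate for the equilibrium measure of a long thin tube, which you do not have.

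Your Step~1 needs the same estimate in $d=3$ (with $q=\delta_\ell$). The window bound quoted from \cite{MS-Visibility-AIHP} concerns a single window seen from a point, so it does not supply this for two windows and several segments either.

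\textbf{How the paper closes it, and smaller issues.} The paper imports \cite[(3.9)]{MS-Visibility-AIHP}:
$\mathrm{cap}(B([0,\ell e_1],\rho'))-\mathrm{cap}(B([0,\ell e_1],\rho'-\delta_\ell))\leq C'(d,\rho')$, with $C'$ continuous in $\rho'$. It telescopes this $M_i+1\leq 11(\kappa+1)$ times, uniformly for $\rho'\in[\rho/2,\rho]$. With that input your two steps would also go through.

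The paper's reduction is simpler, though, and avoids your discretisation and Step~1 altogether. Every point of $L_i'$ lies within $q_i$ of any segment joining $B(p_i,q_i)$ to $B(p_i',q_i)$. Hence $\mathrm{vis}(B(p_i,q_i),B(p_i',q_i))$ forces $L_i'$ to be vacant for the obstacle set with all radii reduced by $q_i$, and one only has to compare the concentric tubes $B(L_i',\rho)\supseteq B(L_i',\rho-q_i)$.

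The several segments are then handled by $\mathrm{cap}(\bigcup_iB_i)-\mathrm{cap}(\bigcup_iA_i)\leq\sum_i(\mathrm{cap}(B_i)-\mathrm{cap}(A_i))$ for $A_i\subseteq B_i$, and its volume analogue. Monotonicity of the events in $\mathcal V$, which you invoke, gives no upper bound for the intersection on its own.

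Finally, your monotonicity step in Step~2 runs the wrong way. To bound the probability for the $[a_i,a_i']$ from above, you must bound the functional of the union minus that of the tubes around the $[a_i,a_i']$, not minus that of the tubes around the $L_i'$. This is repairable, since each $L_i'$ also lies within $q_i$ of $[a_i,a_i']$.
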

\begin{lemma}\label{l:upper-bound-2}
There exists $C=C(d,N,\mathrm{law}(\mathcal C))$ such that for any $o_1,\ldots, o_N\in B(r)$, 
\[
\mathsf P\Big[\bigcap_{i\in I}\mathrm{vis}\big(p_i,p_i'\big)\Big]\leq 
e^{C}\,\mathsf P\Big[\bigcap_{i=1}^{N+1}\mathrm{vis}\big(o_{i-1},o_i\big)\Big].
\]
\end{lemma}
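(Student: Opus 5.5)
Since every event $\mathrm{vis}(x,y)$ for points is the event $\{[x,y]\subseteq\mathcal V\}$, both probabilities in Lemma~\ref{l:upper-bound-2} are void probabilities of explicit compact sets. Writing $K'=\bigcup_{i\in I}L_i'$ and $K=\bigcup_{i=1}^{N+1}L_i$, the claim becomes $\mathsf P[\mathcal C\cap K=\emptyset]\geq e^{-C}\mathsf P[\mathcal C\cap K'=\emptyset]$. Since $K'\subseteq K$, by \eqref{eq:BM-mu}, \eqref{eq:PC-mu-rho} and \eqref{eq:BI-capacity-rho} it suffices to show that the functional $\Phi(A)$ governing the model (the expected volume $\mathsf E[\lambda_d(B(A,\rho))]$, the averaged projected volume of $B(A,\rho)$, or $\mathrm{cap}(B(A,\rho))$) satisfies
\[
\Phi(K)\leq \Phi(K') + C .
\]
The set $K\setminus K'$ is the union of at most $2(N+1)$ short pieces. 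For $i\in I$ these are the two end segments $[o_{i-1},p_i]$ and $[p_i',o_i]$, each of length $1$. For $i\notin I$ it is the whole segment $L_i$, of length $<2$. So $K\setminus K'\subseteq\bigcup_{j}S_j$, where $S_j$ are at most $2(N+1)$ segments of length at most $2$.

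The key step is subadditivity of $\Phi$ together with a uniform bound on $\Phi$ of a short segment:
\[
\Phi(K)\leq \Phi(K')+\sum_j\Phi(S_j),\qquad \Phi(S_j)\leq C .
\]
\begin{itemize}
\item For the Boolean model, $\lambda_d(B(A\cup A',\rho))\leq\lambda_d(B(A,\rho))+\lambda_d(B(A',\rho))$. Lemma~\ref{l:volume-cylinder} gives $\mathsf E[\lambda_d(B(S_j,\rho))]\leq C\,\mathsf E[\rho^{d-1}+\rho^d]<\infty$, which is finite under the standing assumption $\mathsf E[\rho^d]<\infty$.
\item For the Poisson cylinders, the projection of a union is the union of the projections, so $\lambda_{d-1}$ of it is subadditive. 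The projection of $B(S_j,\rho)$ has $(d-1)$-volume at most $C(\rho^{d-2}\cdot 2+\rho^{d-1})$.
\item For the Brownian interlacements, capacity is subadditive, since $\mathsf P_{\mu_R}[H_{A\cup A'}<\infty]\leq \mathsf P_{\mu_R}[H_A<\infty]+\mathsf P_{\mu_R}[H_{A'}<\infty]$ by \eqref{def:equilibrium-measure-and-capacity}. Also $\mathrm{cap}(B(S_j,\rho))\leq \mathrm{cap}(B(0,2+\rho))$ by monotonicity and translation invariance.
\end{itemize}
Summing the at most $2(N+1)$ terms gives the constant $C=C(d,N,\mathrm{law}(\mathcal C))$. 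Exponentiating then yields
\[
\mathsf P\Big[\bigcap_{i=1}^{N+1}\mathrm{vis}(o_{i-1},o_i)\Big]=e^{-\alpha\Phi(K)}\geq e^{-\alpha C}e^{-\alpha\Phi(K')}=e^{-\alpha C}\,\mathsf P\Big[\bigcap_{i\in I}\mathrm{vis}(p_i,p_i')\Big].
\]
Because the argument uses only the discarded pieces, it is uniform in $o_1,\ldots,o_N\in B(r)$ and needs no geometric constraint on their positions.

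I expect no real obstacle. The only points to check are the following.
\begin{itemize}
\item The identity $\bigcap_i\mathrm{vis}(x_i,y_i)=\{\bigcup_i[x_i,y_i]\cap\mathcal C=\emptyset\}$ holds because visibility of points is a void event.
\item In the Boolean case, the constant depends on $\mathsf Q$ only through $\mathsf E[\rho^{d-1}]$ and $\mathsf E[\rho^d]$, both finite.
\item If $I=\emptyset$, the left-hand side of the lemma is $1$ and $K$ has total length below $2(N+1)$. The bound then reads $\mathsf P[K\subseteq\mathcal V]\geq e^{-C}$, which follows from the same estimate $\Phi(K)\leq C$.
\end{itemize}
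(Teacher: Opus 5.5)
Your proof is correct and follows essentially the paper's route: you reduce via the void-probability formulas to $\Phi(K)\le\Phi(K')+C$ and control the discarded pieces near the $o_i$ by subadditivity, which is exactly the paper's argument for the Brownian interlacements. For the Boolean and cylinder models the paper instead writes the difference as $\sum_{i\notin I}\mathsf E[\lambda_d(B(L_i,\rho))]+\sum_{i\in I}\kappa_{d-1}\mathsf E[\rho^{d-1}](|L_i|-|L_i'|)$ (resp.\ its projected analogue) via Lemma~\ref{l:volume-cylinder}, which differs only cosmetically from your uniform subadditivity treatment.
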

\begin{lemma}\label{l:upper-bound-3}
There exist $c=c(d,N,\mathrm{law}(\mathcal C))$ and $C=C(d,N,\mathrm{law}(\mathcal C))$ such that for any 
$\kappa\in \N_0$ and any 
$o_1,\ldots, o_N\in B(r)$ not all contained in $B(\R e_1,\kappa h_r)$, 
\[
\mathsf P\Big[\bigcap_{i=1}^{N+1}\mathrm{vis}\big(o_{i-1},o_i\big)\Big]\leq e^{- c\kappa (\kappa\wedge  \log\log r) + C}f(r). 
\]
\end{lemma}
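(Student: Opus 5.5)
We must prove Lemma~\ref{l:upper-bound-3}: with $L=\bigcup_{i=1}^{N+1}[o_{i-1},o_i]$, we have $\mathsf P[\bigcap_i \mathrm{vis}(o_{i-1},o_i)] = \mathsf P[L\subseteq\mathcal V]$. By \eqref{eq:BM-mu}, \eqref{eq:PC-mu-rho} and \eqref{eq:BI-capacity-rho}, this equals $\exp(-\alpha\Phi(B(L,\rho)))$, where $\Phi$ is, respectively, the expected volume, the averaged projected $(d-1)$-volume, or the capacity. Likewise $f(r)=\exp(-\alpha\Phi(B([0,re_1],\rho)))$. So the plan is to prove the deterministic inequality
\[
\Phi(B(L,\rho)) \geq \Phi(B([0,re_1],\rho)) + c\kappa(\kappa\wedge\log\log r) - C
\]
whenever some $o_j$ lies at distance $\geq\kappa h_r$ from $\R e_1$, and to do this separately for each model. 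We may assume $\kappa\geq K_0$, since otherwise the bound follows from $\mathsf P[L\subseteq\mathcal V]\leq\mathsf P[\mathrm{vis}(0,o_{N+1})]$ once we note that the path reaches $\partial B(r)$. Strictly, the endpoint is $o_{N+1}=re_1$, so $L$ connects $0$ to $re_1$.

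\emph{Geometric core.} Let $o_j$ be the point farthest from $\R e_1$, at distance $D\geq\kappa h_r$. The polygonal path $0\to o_j\to re_1$ is shorter than $L$ (triangle inequality), and its length is at least
\[
\sqrt{a^2+D^2}+\sqrt{(r-a)^2+D^2}\geq r+cD^2/r\geq r+c\kappa^2 h_r^2/r,
\]
where $a$ is the projection of $o_j$ on $\R e_1$ (if $|a|$ or $|r-a|$ exceeds $r$, the excess only helps). With $h_r^2=r$ this gives length excess $\geq c\kappa^2$, and in $d=3$ Brownian interlacements it gives excess $\geq c\kappa^2\log r$.

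\emph{Poisson-Boolean and cylinders.} For the Boolean model I would lower-bound $\lambda_d(B(L,\rho))$ by slicing: away from the $O(1)$-neighbourhoods of the vertices, the $\rho$-tube around $L$ covers a length comparable to the length of $L$. Since $N$ is fixed, the overlaps near vertices contribute only $O(N)$. When a segment makes a small angle with the next one, Lemma~\ref{l:intersection-two-lines} and Lemma~\ref{l:volume-difference} control the overlap. Together with Lemma~\ref{l:volume-cylinder} this yields $\lambda_d(B(L,\rho))\geq\lambda_d(B([0,re_1],\rho))+c\,\mathrm{excess}-C$. The same holds after taking expectation over $\rho$, for $\rho$ bounded below on an event of positive $\mathsf Q$-probability; this gives $e^{-c\kappa^2+C}$. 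For the cylinders I would do the same after projecting onto hyperplanes. Here Lemma~\ref{l:vector-projections} ensures that, for a positive-measure set of rotations $\phi$, the projected path retains a comparable length excess in $d-1\geq2$ dimensions. The $(d-1)$-volume of the projected tube is then handled by the planar/volume argument.

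\emph{Brownian interlacements, the main obstacle.} Capacity is not additive, so the plan is to use \eqref{eq:capacity-variational}. I would take the probability measure $\mu$ on $B(L,\rho)$ obtained by spreading the normalized equilibrium measure of a straight tube uniformly (in the sense of Lemma~\ref{l:equilibrium-measure-cylinder}) along the arclength of $L$. The Green energy of $\mu$ is then compared to that of the straight tube of length $|L|$: the local self-energy is the same, and bending only increases distances between far-apart pieces except near vertices, where the contribution is $O(1)$ per vertex. Lemma~\ref{l:equilibrium-measure-cylinder} gives the density control needed for this. This shows
\[
\mathrm{cap}(B(L,\rho))\geq \mathrm{cap}(B([0,|L|e_1],\rho))-C.
\]
Finally, by \eqref{eq:capacity-asymptotics}, lengthening a tube by $s$ increases its capacity by about $cs$ in $d\geq4$ and by about $cs/\log r$ in $d=3$. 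For the latter, compare with Lemma~\ref{l:capacity-cylinder-d3}, which shows that $s=D\log r$ costs only $O(1)$, so the precise gain must be extracted.

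In $d=3$ the excess $s\geq c\kappa^2\log r$ therefore gives a gain of $c\kappa^2$ when $s\ll r$. When $\kappa h_r$ is comparable to $r$, the saturation to $\kappa\log\log r$ arises from the logarithmic corrections in \eqref{eq:capacity-asymptotics}. This explains the $\kappa\wedge\log\log r$ in the statement. I expect making this capacity comparison quantitative, with an explicit lower-order error in $d=3$, to be the hardest step.
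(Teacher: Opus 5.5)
\textbf{There is a genuine gap: the Brownian interlacements step fails.} Your reduction to a deterministic comparison of $\Phi(B(L,\rho))$ with $\Phi(B([0,re_1],\rho))$ is fine, and so is your length computation for $0\to o_j\to re_1$. The problem is the next step.

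You transport the equilibrium measure of a straight tube of length $|L|$ along arclength onto $B(L,\rho)$ and claim that bending increases distances between far-apart pieces. It is the other way round. If $\gamma$ is the arclength parametrization of $L$, then $\|\gamma(s)-\gamma(t)\|\le|s-t|$. So the transported measure has larger, not smaller, Green energy than the straight tube's equilibrium measure, and \eqref{eq:capacity-variational} gives nothing.

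The inequality you want, $\mathrm{cap}(B(L,\rho))\ge\mathrm{cap}(B([0,|L|e_1],\rho))-C$, is in fact false for admissible configurations. Take $N=2$ with $o_2$ the midpoint of $[0,o_1]$. Then $L_2\subseteq L_1$, so $B(L,\rho)=B(L_1,\rho)\cup B(L_3,\rho)$. By subadditivity and \eqref{eq:capacity-asymptotics}, its capacity falls short of that of a straight tube of length $|L|$ by order $r/\log r$ ($d=3$), resp.\ $r$ ($d\ge4$).

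Your Boolean/cylinder sketch has the same defect. Overlaps are not confined to $O(1)$-neighbourhoods of vertices when the path backtracks or two segments run nearly parallel. Lemma~\ref{l:intersection-two-lines} actually gives overlap length of order $\rho/\tan(\theta/2)$, which is large for small angles. Separately, $\mathsf P[L\subseteq\mathcal V]\le\mathsf P[\mathrm{vis}(0,o_{N+1})]$ is false, since vacancy of $L$ says nothing about the chord. For small $\kappa$ you need the $\kappa=0$ case of the deterministic comparison instead.

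\textbf{The missing idea} is to compare with the straight tube of length $r$, not $|L|$, via a non-contracting map in the opposite direction. The gain should come from a single segment, so that overlaps never matter.

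\emph{Volumes.} The paper picks one $L_{i_*}$ with transverse extent $\ge\kappa h_r/N$, so $|L_{i_*}|-|\widehat L_{i_*}|\ge\kappa^2/(4N^2)$. It then compares slab by slab. In the minimal slab containing $B(L_{i_*},\rho)$, the union has volume at least $\lambda_d(B(L_{i_*},\rho))$. In every slab, it has at least the straight tube's volume, because each hyperplane $\{x(1)=t\}$ with $t\in[0,r]$ meets $L$.

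\emph{Capacity.} The paper first prunes $L$ to subsegments whose axial projections are pairwise disjoint and $4\rho$ apart, at an $O(1)$ cost. It cuts $B([0,re_1],\rho)$ into slabs $A_i$ of the projected lengths $r_i$. It maps each $A_i$ onto $B(L_i,\rho)$ by stretching along $e_1$ by $|L_i|/r_i$ and then rotating. By Lemma~\ref{l:gx-gy-case1} this map is non-contracting, so the push-forward of $\mu_0$ has lower energy. The gain is read off from the self-energy of the one tilted slab via Lemma~\ref{l:equilibrium-measure-cylinder}: it is at least $c\,r_{i_*}\sin^2\theta_{i_*}\log r_{i_*}/r^2$ in $d=3$.

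\emph{Where $\kappa\wedge\log\log r$ comes from.} In $d=3$, the gain above is $\ge c\kappa^2 I(\mu_0)^2$ for segments at angle $\le\pi/4$, precisely because $h_r^2=r\log r$. For steep segments with axial projection $\ge\kappa\log^2 r$, it is only $\ge c\kappa\log\log r\, I(\mu_0)^2$. That is the source of $\kappa\wedge\log\log r$, not lower-order terms in \eqref{eq:capacity-asymptotics}.

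\emph{The remaining steep case.} Suppose every segment with large transverse extent is steep with axial projection $<\kappa\log^2 r$. Then stretching has nothing to act on. The paper instead compresses a window of length $r_{i_*}+2\Delta$, with $\Delta=\kappa\log^5 r$, onto the steep piece. It then controls the cross terms, where $g$ may contract by up to $\Delta$, using that piece's transverse separation $\ge c\kappa h_r$ from the other retained segments.

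Your plan contains neither the non-contracting map nor this case analysis.
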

We will prove these lemmas separately for each of the three models in the next sections.

\subsection{Poisson-Boolean models}\label{sec:upper-bound-BM}
In this section, we prove Lemmas~\ref{l:upper-bound-1}--\ref{l:upper-bound-3} for the Poisson-Boolean models. 
Let $\omega = \sum_{k\geq 1}\delta_{(x_k,r_k)}$ be a Poisson point process on $\R^d\times \R_+$ ($d\geq 2$) with intensity measure $\alpha dx\otimes \mathsf Q$, and $\mathcal C =\mathcal C(\omega) = \bigcup_{k\geq 1}B(x_k,r_k)$. 

\begin{proof}[Proof of Lemma~\ref{l:upper-bound-1}]
Observe that for each $i\in I$, the event 
\[
\big\{\omega\,:\,\text{$B(p_i',q_i)$ is visible from $B(p_i,q_i)$ through the vacant set of $\mathcal C(\omega)$}\big\}
\]
implies the event 
\[
G_i = \big\{\omega = \sum_{k\geq 1}\delta_{(x_k,r_k)}\;:\;\text{for all $k$, $d(x_k,L_i')>r_k-q_i$}\big\}.
\]
Let 
\[
A = \big\{(x,r)\in\R^d\times\R_+\,:\,
\text{for some $i$, $d(x,L_i')\leq r-q_i$}
\big\},
\]
then by the definition of the Poisson point process, 
\begin{eqnarray*}
\mathsf P\big[\bigcap_{i\in I}G_i\big] 
&= &\mathsf P\big[\omega(A)=0\big]
= \exp\Big(-\alpha \int_A dx\mathsf Q(dr)\Big)\\
&= &\exp\Big(-\alpha \mathsf E\big[\lambda_d\big(\bigcup_{i\in I}B(L_i',(\rho-q_i)_+)\big)\big]\Big).
\end{eqnarray*}
On the other hand, 
\begin{eqnarray*}
\mathsf P^\alpha_\mathsf Q\Big[\bigcap_{i\in I}\mathrm{vis}\big(p_i,p_i'\big)\Big] 
&= &\mathsf P\Big[\omega = \sum_{k\geq 1}\delta_{(x_k,r_k)}\;:\;\text{for all $k$ and $i\in I$, $d(x_k,L_i')>r_k$}\Big]\\
&=
&\exp\Big(-\alpha \mathsf E\big[\lambda_d\big(\bigcup_{i\in I}B(L_i',\rho)\big)\big]\Big).
\end{eqnarray*}
Note that 
\begin{multline*}
\mathsf E\big[\lambda_d\big(\bigcup_{i\in I}B(L_i',\rho)\big)\big] - \mathsf E\big[\lambda_d\big(\bigcup_{i\in I}B(L_i',(\rho-q_i)_+)\big)\big]\\
\begin{aligned}
&\leq\,\, \mathsf E\big[\lambda_d\big(\bigcup_{i\in I}B(L_i',\rho\big)\setminus B(L_i',(\rho-q_i)_+)\big)\big]
\leq \sum_{i\in I} 
\mathsf E\big[\lambda_d\big(B(L_i',\rho\big)\setminus B(L_i',(\rho-q_i)_+)\big)\big]\\
&=\,\, \sum_{i\in I} 
\mathsf E\big[\lambda_d\big(B(L_i',\rho)\big)- \lambda_d\big(B(L_i',(\rho-q_i)_+)\big)\big]
= \sum_{i\in I}\kappa_{d-1}|L_i'|\,\mathsf E\big[\rho^{d-1} - (\rho-q_i)_+^{d-1}\big],
\end{aligned}
\end{multline*}
where in the last step we used Lemma~\ref{l:volume-cylinder}. 
Now, 
\begin{eqnarray*}
\mathsf E\big[\rho^{d-1} - (\rho-q_i)_+^{d-1}\big]
&\leq &\mathsf E\big[\rho^{d-1};\rho\leq q_i\big] + 
\mathsf E\big[\rho^{d-1} - (\rho-q_i)^{d-1}; \rho> q_i\big]\\
&\leq &q_i\mathsf E[\rho^{d-2}] + q_i(d-1)\mathsf E[\rho^{d-2}]
= q_i d\mathsf E[\rho^{d-2}].
\end{eqnarray*}
Finally, by Lemma~\ref{l:qi-properties} and the fact that $\delta_r = \frac1r$, 
\[
|L_i'|q_i \leq |L_i|q_i = \frac{q_i}{\delta_{|L_i|}} \leq 10(\kappa+1). 
\]
Thus, 
\[
\frac{\mathsf P^\alpha_\mathsf Q\Big[\bigcap_{i\in I}\mathrm{vis}\big(B(p_i,q_i),B(p_i',q_i)\big)\Big]}{
\mathsf P^\alpha_\mathsf Q\Big[\bigcap_{i\in I}\mathrm{vis}\big(p_i,p_i'\big)\Big]}
\leq 
\exp\Big(\alpha 10(\kappa+1)(N+1)\kappa_{d-1}d\mathsf E[\rho^{d-2}]\Big),
\]
as required.
\end{proof}

\begin{proof}[Proof of Lemma~\ref{l:upper-bound-2}]
By \eqref{eq:BM-mu},
\[
\mathsf P^\alpha_\mathsf Q\Big[\bigcap_{i\in I}\mathrm{vis}\big(p_i,p_i'\big)\Big]=
\mathsf P^\alpha_\mathsf Q\Big[\mathcal C\cap \big(\bigcup_{i\in I}L_i'\big)=\emptyset\Big]
=
\exp\Big(-\alpha \mathsf E\big[\lambda_d\big(\bigcup_{i\in I}B(L_i',\rho)\big)\big]\Big)
\]
and, similarly, 
\[
\mathsf P^\alpha_\mathsf Q\Big[\bigcap_{i=1}^{N+1}\mathrm{vis}\big(o_{i-1},o_i\big)\Big] 
=
\exp\Big(-\alpha \mathsf E\big[\lambda_d\big(\bigcup_{i=1}^{N+1}B(L_i,\rho)\big)\big]\Big).
\]
As in the proof of Lemma~\ref{l:upper-bound-1}, we obtain
\begin{multline*}
\mathsf E\big[\lambda_d\big(\bigcup_{i=1}^{N+1}B(L_i,\rho)\big)\big] - \mathsf E\big[\lambda_d\big(\bigcup_{i\in I}B(L_i',\rho)\big)\big]\\
\begin{aligned}
&\leq\,\, \sum_{i\notin I} \mathsf E\big[\lambda_d\big(B(L_i,\rho)\big)\big] + \sum_{i\in I}\Big(\mathsf E\big[\lambda_d\big(B(L_i,\rho)\big)\big] - \mathsf E\big[\lambda_d\big(B(L_i',\rho)\big)\big]
\Big)\\
&=\,\, \sum_{i\notin I}\big(\kappa_d\mathsf E[\rho^d] + \kappa_{d-1}\mathsf E[\rho^{d-1}]|L_i|\big) + 
\sum_{i\in I}\kappa_{d-1}\mathsf E[\rho^{d-1}]\big(|L_i| - |L_i'|\big)\\
&\leq\,\, (N+1)\big(\kappa_d\mathsf E[\rho^d] + 2 \kappa_{d-1}\mathsf E[\rho^{d-1}]\big), 
\end{aligned}
\end{multline*}
where the equality follows from Lemma~\ref{l:volume-cylinder}, and the last inequality from the fact that $\{|L_i|\}_{i\notin I}$ and $\{|L_i| - |L_i'|\}_{i\in I}$ are all bounded from above by $2$. 
Thus, 
\[
\frac{\mathsf P^\alpha_\mathsf Q\Big[\bigcap_{i\in I}\mathrm{vis}\big(p_i,p_i'\big)\Big]}{\mathsf P^\alpha_\mathsf Q\Big[\bigcap_{i=1}^{N+1}\mathrm{vis}\big(o_{i-1},o_i\big)\Big]}
\leq \exp\Big(\alpha(N+1)\big(\kappa_d\mathsf E[\rho^d] + 2 \kappa_{d-1}\mathsf E[\rho^{d-1}]\big)\Big),
\]
as required. 
\end{proof}

\begin{proof}[Proof of Lemma~\ref{l:upper-bound-3}]
By \eqref{eq:BM-mu}, 
\[
\mathsf P^\alpha_\mathsf Q\Big[\bigcap_{i=1}^{N+1}\mathrm{vis}\big(o_{i-1},o_i\big)\Big] 
=
\exp\Big(-\alpha \mathsf E\big[\lambda_d\big(\bigcup_{i=1}^{N+1}B(L_i,\rho)\big)\big]\Big)
\]
and 
\[
f(r) = \mathsf P^\alpha_\mathsf Q\big[\mathrm{vis}\big(0,re_1\big)\big] 
=
\exp\Big(-\alpha \mathsf E\big[\lambda_d\big(B([0,re_1],\rho)\big)\big]\Big).
\]
Thus, it suffices to prove that 
\[
\mathsf E\Big[\lambda_d\Big(\bigcup_{i=1}^{N+1}B(L_i,\rho)\Big)\Big]
- \mathsf E\big[\lambda_d\big(B([0,re_1],\rho)\big)\big]
\geq c\kappa^2 - C.
\]
By the assumption of the lemma, there exists $i_*$ such that 
the projection of $L_{i_*}$ on the hyperplane orthogonal to $e_1$ has length $\geq \frac1N\kappa h_r$. 
Let $\widehat L_{i_*}$ be the projection of $L_{i_*}$ onto the $Ox_1$-axis. 
Since $|L_{i_*}|\leq 2r$ and $h_r = \sqrt r$, 
\[
|L_{i_*}| - |\widehat L_{i_*}|  = \frac{|L_{i_*}|^2 - |\widehat L_{i_*}|^2}{|L_{i_*}| + |\widehat L_{i_*}|} \geq \frac{1}{4N^2}\kappa^2.
\]
Hence by Lemma~\ref{l:volume-cylinder}, 
\[
\lambda_d\big(B(L_{i_*},\rho)\big) - \lambda_d\big(B(\widehat L_{i_*},\rho)\big)
= \kappa_{d-1}\rho^{d-1}\big(|L_{i_*}| - |\widehat L_{i_*}| \big)
\geq \kappa_{d-1}\rho^{d-1}\frac{1}{4N^2}\kappa^2.
\]
Let $H_{a,b} = \{x\in\R^d\,:\,a\leq x(1)\leq b\}$. Note that for the minimal $H_{a,b}$ containing $B(L_{i_*},\rho)$, 
\[
\lambda_d\Big(H_{a,b}\cap\bigcup_{i=1}^{N+1}B(L_i,\rho)\Big)
\geq \lambda_d\big(B(L_{i_*},\rho)\big)
\]
and 
\[
\lambda_d\big(H_{a,b}\cap B([0,re_1],\rho)\big) 
\leq \lambda_d\big(B(\widehat L_{i_*},\rho)\big) + 2\rho\lambda_{d-1}\big(B(0,\rho)\big) 
= \lambda_d\big(B(\widehat L_{i_*},\rho)\big) + 2\kappa_{d-1}\rho^d,
\]
and for arbitrary $a\leq b$, 
\[
\lambda_d\Big(H_{a,b}\cap\bigcup_{i=1}^{N+1}B(L_i,\rho)\Big)
\geq \lambda_d\big(H_{a,b}\cap B([0,re_1],\rho)\big).
\]
Hence
\begin{equation}\label{eq:BM-upper-bound-3}
\mathsf E\Big[\lambda_d\Big(\bigcup_{i=1}^{N+1}B(L_i,\rho)\Big)\Big]
- \mathsf E\big[\lambda_d\big(B([0,re_1],\rho)\big)\big]
\geq
\kappa_{d-1}\mathsf E[\rho^{d-1}] \frac{1}{4N^2}\kappa^2 - 2\kappa_{d-1}\mathsf E[\rho^d],
\end{equation}
as required. 
\end{proof}

\subsection{Poisson cylinders}\label{sec:upper-bound-PC}
In this section, we prove Lemmas~\ref{l:upper-bound-1}--\ref{l:upper-bound-3} for the Poisson cylinders. 
It suffices to prove the statements for $r\geq r_0=r_0(d,N,\rho)$. 
Thus, we may assume throughout the proofs that $q_i<\rho$ for all $i$.

\begin{proof}[Proof of Lemma~\ref{l:upper-bound-1}]
Let $\omega = \sum_{i\geq 1}\delta_{L_i}$ be the Poisson lines process at level $\alpha$. For $\rho>0$, let $\mathcal C_\rho = \bigcup_{i\geq 1}B(L_i,\rho)$ and observe that for each $i\in I$, if $B(p_i',q_i)$ is visible from $B(p_i,q_i)$ through the vacant set of $\mathcal C_\rho$, then $p_i'$ is visible from $p_i$ through the vacant set of $\mathcal C_{\rho-q_i}$. Therefore, by \eqref{eq:PC-mu-rho}, it suffices to prove that 
\[
\int\limits_{SO_d}\lambda_{d-1}\Big(\pi\big(B\big(\bigcup_{i\in I}L_i',\rho\big)_\phi\big)
\Big)\nu(d\phi)
- 
\int\limits_{SO_d}\lambda_{d-1}\Big(\pi\big(B\big(\bigcup_{i\in I}L_i',\rho-q_i\big)_\phi\big)
\Big)\nu(d\phi)
\leq C(\kappa+1).
\]
By Lemma~\ref{l:volume-cylinder}, 
\begin{multline*}
\lambda_{d-1}\Big(\pi\big(B\big(\bigcup_{i\in I}L_i',\rho\big)_\phi\big)
\Big) - \lambda_{d-1}\Big(\pi\big(B\big(\bigcup_{i\in I}L_i',\rho-q_i\big)_\phi\big)
\Big)\\
\begin{aligned}
&=\,\, 
\lambda_{d-1}\Big(\bigcup_{i\in I}B(\pi((L_i')_\phi),\rho)\cap H\Big)
- 
\lambda_{d-1}\Big(\bigcup_{i\in I}B(\pi((L_i')_\phi),\rho-q_i)\cap H\Big)\\
&\leq\,\,
\sum_{i\in I}\Big(
\lambda_{d-1}\big(B(\pi((L_i')_\phi),\rho)\cap H\big)
- 
\lambda_{d-1}\big(B(\pi((L_i')_\phi),\rho-q_i)\cap H\big)
\Big)\\
&=\,\,
\sum_{i\in I}\Big(
\kappa_{d-1}\mathsf E[\rho^{d-1} - (\rho-q_i)^{d-1}] + \kappa_{d-2}|\pi((L_i')_\phi)|\mathsf E[\rho^{d-2} - (\rho-q_i)^{d-2}]
\Big)\\
&\leq\,\,
(N+1)\kappa_{d-1}\mathsf E[\rho^{d-1}] + 
\kappa_{d-2}(d-1)\mathsf E[\rho^{d-3}]
\sum_{i\in I}|L_i|q_i\\
&\leq\,\,
(N+1)\kappa_{d-1}\mathsf E[\rho^{d-1}] + 
10(\kappa+1)(N+1)\kappa_{d-2}(d-1)\mathsf E[\rho^{d-3}],
\end{aligned}
\end{multline*}
as required, where the last inequality follows from Lemma~\ref{l:qi-properties} and the fact that $\delta_r = \frac1r$. 
\end{proof}

\begin{proof}[Proof of Lemma~\ref{l:upper-bound-2}]
By \eqref{eq:PC-mu-rho}, it suffices to prove that 
\[
\int\limits_{SO_d}\lambda_{d-1}\Big(\pi\big(B\big(\bigcup_{i=1}^{N+1}L_i,\rho\big)_\phi\big)\Big)\nu(d\phi)
- 
\int\limits_{SO_d}\lambda_{d-1}\Big(\pi\big(B\big(\bigcup_{i\in I}L_i',\rho\big)_\phi\big)\Big)\nu(d\phi)
\leq C.
\]
By a similar computation as in the proof of the lemma for the Poisson-Boolean models, we obtain that for each $\phi\in SO_d$, 
\begin{multline*}
\lambda_{d-1}\Big(\pi\big(B\big(\bigcup_{i=1}^{N+1}L_i,\rho\big)_\phi\big)\Big)
- 
\lambda_{d-1}\Big(\pi\big(B\big(\bigcup_{i\in I}L_i',\rho\big)_\phi\big)\Big)\\
\begin{aligned}
&=\,\,
\lambda_{d-1}\Big(\bigcup_{i=1}^{N+1}B(\pi((L_i)_\phi),\rho)\cap H\Big)
- 
\lambda_{d-1}\Big(\bigcup_{i\in I}B(\pi((L_i')_\phi),\rho)\cap H\Big)\\
&\leq\,\,
(N+1)\big(\kappa_{d-1}\mathsf E[\rho^{d-1}] + 2 \kappa_{d-2}\mathsf E[\rho^{d-2}]\big), 
\end{aligned}
\end{multline*}
as required. 
\end{proof}

\begin{proof}[Proof of Lemma~\ref{l:upper-bound-3}]
By \eqref{eq:PC-mu-rho} and the definition of $f(r)$, it suffices to prove that 
\[
\int_{SO_d}\lambda_{d-1}\Big(\pi\big(B\big(\bigcup_{i=1}^{N+1}L_i,\rho\big)_\phi\big)\Big)
\nu(d\phi)
- \int_{SO_d}\lambda_{d-1}\Big(\pi\big(B\big([0,re_1],\rho\big)_\phi\big)\Big)
\nu(d\phi)\geq c\kappa^2 - C.
\]
Let $\pi_s$ be the projection on the hyperplane $H_s=\{z\,:\,z(s)=0\}$. By the rotational invariance of the Haar measure $\nu$, the difference of the integrals above equals to 
\begin{multline*}
\frac1d\sum_{s=1}^d
\int_{SO_d}\Big[\lambda_{d-1}\Big(\pi_s\big(B\big(\bigcup_{i=1}^{N+1}L_i,\rho\big)_\phi\big)\Big)
- \lambda_{d-1}\Big(\pi_s\big(B\big([0,re_1],\rho\big)_\phi\big)\Big)
\Big]\nu(d\phi)\\
= 
\int_{SO_d}\frac1d\sum_{s=1}^d\Big[\lambda_{d-1}\Big(\bigcup_{i=1}^{N+1}B(\pi_s((L_i)_\phi),\rho)\cap H_s\Big) - \lambda_{d-1}\Big(B([0,\pi_s(\phi(re_1))],\rho)\cap H_s\Big)\Big]\nu(d\phi).
\end{multline*}
Note that $\pi_s((L_1)_\phi),\ldots, \pi_s((L_{N+1})_\phi)$ are the line segments in the hyperplane $H_s$ with endpoints $\pi_s(\phi(o_0)) = 0$, $\pi_s(\phi(o_1)),\ldots, \pi_s(\phi(o_N))$, $\pi_s(\phi(o_{N+1})) = \pi_s(\phi(re_1))$. Thus, the difference of the volumes under the integral is non-negative for each $s$. 

Furthermore, by the assumption of the lemma, there exists $i_0$ such that the distance of $o_{i_0}$ from the line $\R e_1$ is at least $\kappa h_r$.
Let $\varphi$ be the angle between the vectors $o_{i_0}$ and $re_1$ and let $\varphi_s$ be the angle between their rotated projections $\pi_s(\phi(o_{i_0}))$ and $\pi_s(\phi(re_1))$. 
By Lemma~\ref{l:vector-projections}, for any $\phi\in SO_d$, there exists $1\leq s_0\leq d$ such that 
\[
\|\pi_{s_0}(\phi(o_{i_0}))\|\geq \frac1{\sqrt d}\|o_{i_0}\|,\,\,
\|\pi_{s_0}(\phi(re_1)\|\geq \frac1{\sqrt d}\|re_1\|,\,\,
\sin\varphi_{s_0} \geq \frac1{\sqrt d}\sin\varphi.
\]
Hence, the distance of $\pi_{s_0}(\phi(o_{i_0}))$ from the line $\R\pi_{s_0}(\phi(e_1))$ is 
\[
d\big(\pi_{s_0}(\phi(o_{i_0})),\R\pi_{s_0}(\phi(e_1))\big) = \|\pi_{s_0}(\phi(o_{i_0}))\| \sin\varphi_{s_0} \geq \frac1d \|o_{i_0}\|\sin\varphi
= \frac1d d(o_{i_0},\R e_1)\geq \frac1d \kappa h_r. 
\]
Now, by the estimates of the volume difference from the proof for the Poisson-Boolean model (see \eqref{eq:BM-upper-bound-3}), we obtain that 
\begin{multline*}
\lambda_{d-1}\Big(\pi_{s_0}\Big(\bigcup_{i=1}^{N+1}B(\phi(L_i),\rho)\Big)\Big) - \lambda_{d-1}\Big(\pi_{s_0}\big(B([0,\phi(re_1)],\rho)\big)\Big)\\
\geq \kappa_{d-2}\mathsf E[\rho^{d-2}] \frac{1}{4d^2N^2}\kappa^2 - 2\kappa_{d-2}\mathsf E[\rho^{d-1}] =: c\kappa^2 - C.
\end{multline*}
Thus, for every rotation $\phi\in SO_d$, 
\[
\frac1d\sum_{s=1}^d\Big[\lambda_{d-1}\Big(\bigcup_{i=1}^{N+1}B(\pi_s((L_i)_\phi),\rho)\cap H_s\Big) - \lambda_{d-1}\Big(B([0,\pi_s(\phi(re_1))],\rho)\cap H_s\Big)\Big]
\geq \frac1d(c \kappa^2 - C),
\]
and the result follows.
\end{proof}

\subsection{Proof of Lemmas~\ref{l:upper-bound-1} and \ref{l:upper-bound-2} for Brownian interlacements}\label{sec:upper-bound-BI-1}
In this section, we prove Lemmas~\ref{l:upper-bound-1} and \ref{l:upper-bound-2} for the Brownian interlacements. 
It suffices to prove the statements for $r\geq r_0=r_0(d,N,\rho)$. 
Thus, we may assume throughout the proofs that $q_i<\frac12\rho$ for all $i$.  

\begin{proof}[Proof of Lemma~\ref{l:upper-bound-1}]
Let $\omega = \sum_{i\geq 1}\delta_{w_i^*}$ be the Brownian interlacement point process at level $\alpha$. For $\rho>0$, let $\mathcal C_\rho = \bigcup_{i\geq 1}B(\mathrm{range}(w_i^*),\rho)$ and observe that for each $i\in I$, if $B(p_i',q_i)$ is visible from $B(p_i,q_i)$ through the vacant set of $\mathcal C_\rho$, then $p_i'$ is visible from $p_i$ through the vacant set of $\mathcal C_{\rho-q_i}$. 
Therefore, by \eqref{eq:BI-capacity-rho}, it suffices to prove that 
\[
\mathrm{cap}\Big(\bigcup_{i\in I}B(L_i',\rho)\Big)
- 
\mathrm{cap}\Big(\bigcup_{i\in I}B(L_i',\rho-q_i)\Big)
\leq C(\kappa+1).
\]
Let $A_i = B(L_i',\rho-q_i)$ and $B_i = B(L_i',\rho)$. By the definition of the capacity, for all $R$ large enough, 
\begin{multline*}
\mathrm{cap}\Big(\bigcup_{i\in I}B_i\Big)
- 
\mathrm{cap}\Big(\bigcup_{i\in I}A_i\Big)
= \mathsf P_{\mu_R}\big[H_{\bigcup_{i\in I}B_i}<\infty\big] - \mathsf P_{\mu_R}\big[H_{\bigcup_{i\in I}A_i}<\infty\big]\\
\begin{aligned}
&=\,\, \mathsf P_{\mu_R}\big[H_{\bigcup_{i\in I}B_i}<\infty, H_{\bigcup_{i\in I}A_i}=\infty\big]
\leq \sum_{i\in I} \mathsf P_{\mu_R}\big[H_{B_i}<\infty, H_{A_i}=\infty\big]\\
&=\,\, \sum_{i\in I}\big( \mathsf P_{\mu_R}\big[H_{B_i}<\infty\big] - \mathsf P_{\mu_R}\big[H_{A_i}<\infty\big]\big)
= \sum_{i\in I}\big(\mathrm{cap}(B_i) - \mathrm{cap}(A_i)\big).
\end{aligned}
\end{multline*}
Thus, it suffices to prove that for all $i\in I$, 
\[
\mathrm{cap}\big(B(L_i',\rho)\big)
- 
\mathrm{cap}\big(B(L_i',\rho-q_i)\big)
\leq C(\kappa+1).
\]
Let $M_i= \lfloor q_i/\delta_{|L_i'|}\rfloor$. By Lemma~\ref{l:qi-properties}, $M_i\leq 10(\kappa+1)$. Furthermore, by \cite[(3.9)]{MS-Visibility-AIHP}, 
\[
\mathrm{cap}\big(B([0,re_1],\rho')\big)
- 
\mathrm{cap}\big(B([0,re_1],\rho'-\delta_r)\big)
\leq C',
\]
where $C'=C'(d,\rho')$ is a continuous function of $\rho'$. Thus, $C= \sup_{\frac12\rho\leq \rho'\leq \rho} C'(d,\rho')<\infty$ and 
\begin{multline*}
\mathrm{cap}\big(B(L_i',\rho)\big) - \mathrm{cap}\big(B(L_i',\rho-q_i)\big)\\
\begin{aligned}
&\leq\,\, \sum_{k=0}^{M_i} 
\big(\mathrm{cap}\big(B(L_i',\rho-k\delta_{|L_i'|})\big) - \mathrm{cap}\big(B(L_i',\rho-(k+1)\delta_{|L_i'|})\big)\big)\\
&\leq\,\, C(M_i+1)\leq 11C(\kappa+1),
\end{aligned}
\end{multline*}
as required. 
\end{proof}

\begin{proof}[Proof of Lemma~\ref{l:upper-bound-2}]
 By \eqref{eq:BI-capacity-rho}, it suffices to prove that 
\[
\mathrm{cap}\Big(\bigcup_{i=1}^{N+1}B(L_i,\rho)\Big)
- 
\mathrm{cap}\Big(\bigcup_{i\in I}B(L_i',\rho)\Big)
\leq C.
\]
By the subadditivity of the capacity, the above difference is at most 
\[
\sum_{i=1}^{N+1}\mathrm{cap}\big(B(L_i\setminus L_i',\rho)\big),
\]
where we set $L_i'=\emptyset$ for $i\notin I$. Since $B(L_i\setminus L_i',\rho)\subseteq B(o_{i-1},\rho+1)\cup B(o_i,\rho+1)$, $\mathrm{cap}\big(B(L_i\setminus L_i',\rho)\big)\leq C$ for some $C=C(d,\rho)$. The result follows. 
\end{proof}

\subsection{Proof of Lemma~\ref{l:upper-bound-3} for Brownian interlacements}\label{sec:upper-bound-BI-2}

By \eqref{eq:BI-capacity-rho} and the definition of $f(r)$, we need to show that 
\[
\mathrm{cap}\Big(\bigcup_{i=1}^{N+1}B(L_i,\rho)\Big)
- \mathrm{cap}\big(B([0,re_1],\rho)\big)
\geq c\kappa \big(\kappa\wedge \log\log r\big) - C.
\]
Denote by $\theta_i$ the angle in $[0,\frac\pi2]$ between the line segment $L_i$ and $\R e_1$ and consider the index sets
\begin{equation}\label{eq:I0-I1-I2}
\begin{aligned}
I_0 &= \Big\{1\leq i\leq N+1\,:\,|L_i|\sin\theta_i \geq \frac{\kappa h_r}{5(N+1)^2}\Big\},\\
I_1 &= \Big\{i\in I_0\,:\,\theta_i\leq \frac\pi4\Big\},\\
I_2 &= \Big\{i\in I_0\,:\,\theta_i>\frac\pi4, |L_i|\cos\theta_i\geq \kappa \log^2 r\Big\}.
\end{aligned}
\end{equation}
We distinguish two cases: $I_1\cup I_2\neq\emptyset$ and $I_1\cup I_2=\emptyset$. 
\begin{proof}[Proof of Lemma~\ref{l:upper-bound-3} in the case $I_1\cup I_2\neq\emptyset$]
Fix $i_*\in I_1\cup I_2$. 
From the sequence of line segments 
\[
(L_{(1)},\ldots L_{(N+1)}) = \big(L_{i_*}, L_{i_*+1},\ldots, L_{N+1}, L_{i_*-1},L_{i_*-2},\ldots, L_1\big), 
\]
we select a family of their subsegments $L_1', \ldots, L_{N'}'$ with pairwise disjoint projections on the $Ox_1$-axis as follows. 
Let $\mathcal L_1' = L_{(1)}$.
For $1\leq k\leq N$, let $B_k$ be the set of points in $\R^d$ whose projection on $Ox_1$-axis is at distance smaller than $4\rho$ from the projection of $\mathcal L_k'$ and define $\mathcal L_{k+1}' = L_{(k+1)}\setminus (B_1\cup\ldots\cup B_k)$. 
Note that $\{\mathcal L_1',\ldots, \mathcal L_{N+1}'\}$ is the set of $N'\leq N+1$ line segments. We order and denote these line segments by $L_1', \ldots, L_{N'}'$ so that the projection of $L_i'$ on the $Ox_1$-axis is to the left of the projection of $L_{i+1}'$ for each $i$, and let $i_*'$ be such that $L_{i_*'}' = L_{i_*}$. This way, the distance between the projections of $L_{i-1}'$ and $L_i'$ is precisely $4\rho$.

\smallskip

Let $r'$ be the sum of the lengths of the projections of $L_1',\ldots, L_{N'}'$ on the $Ox_1$-axis. Note that $r' \geq r - 4\rho (N+1)$, in particular (e.g.\ by \eqref{eq:capacity-asymptotics}), for some $C=C(d,\rho,N)$,
\[
\mathrm{cap}\big(B([0,re_1],\rho)\big) - \mathrm{cap}\big(B([0,r'e_1],\rho)\big)\leq C.
\]
Hence, by the monotonicity of the capacity, it suffices to prove that 
\[
\mathrm{cap}\Big(\bigcup_{i=1}^{N'}B(L_i',\rho)\Big)
- \mathrm{cap}\big(B([0,r'e_1],\rho)\big)
\geq c\kappa(\kappa\wedge \log\log r') - C.
\]
\emph{By abuse of notation, we omit the primes and write in the following $L_i$ instead of $L_i'$, $N$ instead of $N'$, $r$ instead of $r'$, and $i_*$ instead of $i_*'$.}

In the new notation, let $\mathcal L_0 = [0,re_1]$ and $\mathcal L_1 = \bigcup_{i=1}^NL_i$. 
Consider the points $a_1=0$, 
\[
a_i = \big(\sum_{k=1}^{i-1} r_k, 0,\ldots,0\big),\text{ for }2\leq i\leq N+1,
\]
and define the sets 
\[
A_i = \big\{x\in B(\mathcal L_0,\rho)\,:\, a_i(1)\leq x(1)<a_{i+1}(1)\big\},\,\text{ for }1\leq i\leq N.
\]
Note that $A_i$ are disjoint subsets of $B(\mathcal L_0,\rho)$ and by the subadditivity of the capacity, 
\[
\mathrm{cap}\big(B(\mathcal L_0,\rho)\big) - \mathrm{cap}\big(\bigcup_{i=1}^N A_i\big)\leq C.
\]
Hence it suffices to prove that 
\[
\mathrm{cap}\big(B(\mathcal L_1,\rho)\big) - \mathrm{cap}\big(\bigcup_{i=1}^N A_i\big)\geq c\kappa(\kappa\wedge \log\log r).
\]

Let $\mu_0$ be the normalized equilibrium measure of $\bigcup_{i=1}^N A_i$. By \eqref{eq:capacity-variational}, it suffices to prove that for some probability measure $\mu_1$ on $B(\mathcal L_1,\rho)$, 
\[
\frac{1}{I(\mu_1)} - \frac{1}{I(\mu_0)} \geq c\kappa(\kappa\wedge \log\log r), 
\]
where $I(\mu) = \iint \|x-y\|^{2-d}\mu(dx)\mu(dy)$, for which it suffices to show that 
\begin{equation}\label{eq:BI-capacities-I}
I(\mu_0) - I(\mu_1) \geq c\kappa(\kappa\wedge \log\log r) I(\mu_0)^2. 
\end{equation}
We identify $\mu_1$ as a push-forward of $\mu_0$ by some function $g: \bigcup_{i=1}^N A_i\to B(\mathcal L_1,\rho)$.
Let $\theta_i$ be the angle in $[0,\frac\pi2]$ between $L_i$ and $\R e_1$. Let $r_i = |L_i|\cos\theta_i$ be the length of the projection of $L_i$ onto the $Ox_1$-axis; by definition, $\sum_{i=1}^Nr_i = r$. 
Let $b_i$ and $b_i'$ be the end-vertices of the line segment $L_i$ so that $b_i(1)\leq b_i'(1)$. Fix some rotations $\psi_1, \ldots, \psi_N$ of $\R^d$ such that 
\[
\overrightarrow{b_ib_i'} = |L_i|\psi_i(e_1)
\]
and define the function $g:\bigcup_{i=1}^N A_i\to B(\mathcal L_1,\rho)$ by
\[
g(x) = b_i + \psi_i\Big(\big(\frac{|L_i|}{r_i}\big(x(1)-a_i(1)\big),x(2),\ldots, x(d)\big)\Big),\,\, \text{for } x\in A_i.
\]
In words, the function $g$ applied to $A_i$ is the composition of the following maps: (a) translation which maps $a_i$ to the origin, (b) stretching in the direction of $e_1$ by the factor $\frac{|L_i|}{r_i}$, (c) rotation by $\psi_i$, and (d) translation which maps $0$ to $b_i$; in particular, $g(a_i) = b_i$, $g(a_{i+1}-) = b_i'$ and $g(A_i)\subseteq B(L_i,\rho)$, for all $1\leq i\leq N$, where $g(a_{i+1}-)$ is the limit of $g(x)$ as $x\to a_{i+1}$ for $x\in A_i$. 

\smallskip

Now, consider the measure $\mu_1(\cdot) = \mu_0(g^{-1}(\cdot))$. Note that $\mu_1$ is a probability measure on $B(\mathcal L_1,\rho)$. Furthermore, by the change of variables formula, 
\[
I(\mu_1) = \iint \|x_1-y_1\|^{2-d}\mu_1(dx_1)\mu_1(dy_1) = \iint \|g(x)-g(y)\|^{2-d}\mu_0(dx)\mu_0(dy).
\]
Thus, 
\[
I(\mu_0) - I(\mu_1) = \iint \frac{\|g(x)-g(y)\|^{d-2}-\|x-y\|^{d-2}}{\|x-y\|^{d-2}\|g(x)-g(y)\|^{d-2}}\mu_0(dx)\mu_0(dy).
\]
We bound the integral with the help of the following lemma. 
\begin{lemma}\label{l:gx-gy-case1}
If $x,y\in A_i$ for some $1\leq i\leq N$, then 
\[
\|g(x)-g(y)\|^2 - \|x-y\|^2 = \tan^2\theta_i |x(1)-y(1)|^2.
\]
If $x\in A_i$ and $y\in A_j$ for some $1\leq i<j\leq N$, then 
\[
\|g(x)-g(y)\|\geq |\langle g(x)-g(y), e_1\rangle| \geq \|x-y\|.
\]
\end{lemma}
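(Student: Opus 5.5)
The plan is to compute directly from the explicit form of $g$, keeping track of which parts of $g$ are isometries. On $A_i$ the map $g$ is a composition of translations, the rotation $\psi_i$, and the stretching $S_i$ by the factor $|L_i|/r_i=1/\cos\theta_i$ in the $e_1$-direction. For $x,y\in A_i$ the translations cancel and $\psi_i$ preserves norms, so $\|g(x)-g(y)\|=\|S_i(x-y)\|$. Since $S_i$ changes only the first coordinate,
\[
\|S_i(x-y)\|^2-\|x-y\|^2=\Big(\frac{1}{\cos^2\theta_i}-1\Big)|x(1)-y(1)|^2=\tan^2\theta_i\,|x(1)-y(1)|^2,
\]
which is the first identity.

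For the second statement, take $x\in A_i$ and $y\in A_j$ with $i<j$. The first inequality $\|g(x)-g(y)\|\ge|\langle g(x)-g(y),e_1\rangle|$ is just Cauchy--Schwarz. For the second, I would first compute the first coordinate of $g(x)$. Write $u=(\frac{|L_i|}{r_i}(x(1)-a_i(1)),x(2),\dots,x(d))$. Then $g(x)(1)=b_i(1)+\langle\psi_i(u),e_1\rangle$. Because $\psi_i(e_1)$ is the unit vector along $\overrightarrow{b_ib_i'}$, whose $e_1$-component is $\cos\theta_i$, the term coming from $u(1)e_1$ is exactly $x(1)-a_i(1)$. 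The transverse part $(0,x(2),\dots,x(d))$ has norm at most $\rho$, so it contributes at most $\rho$ in absolute value. Hence
\[
g(x)(1)\in\big[b_i(1)+x(1)-a_i(1)-\rho,\;b_i(1)+x(1)-a_i(1)+\rho\big].
\]

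Next I need the relation between the $b_i$ and the $a_i$. By construction the projections of the $L_k$ onto the $Ox_1$-axis are ordered left to right with gaps exactly $4\rho$, and the $A_k$ are placed with no gaps. Therefore $b_j(1)-b_i'(1)=\sum_{i<k<j}r_k+4\rho(j-i)$. Using $a_j(1)-a_{i+1}(1)=\sum_{i<k<j}r_k$ and $b_i'(1)=b_i(1)+r_i$, this gives
\[
\big(b_j(1)-a_j(1)\big)-\big(b_i(1)-a_i(1)\big)=4\rho(j-i)\ge4\rho .
\]
Combining with the previous bounds, $g(y)(1)-g(x)(1)\ge y(1)-x(1)+4\rho-2\rho=y(1)-x(1)+2\rho$, and $y(1)-x(1)\ge0$ because $x(1)<a_{i+1}(1)\le a_j(1)\le y(1)$.

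The step I expect to be the main obstacle is getting from $y(1)-x(1)+2\rho$ to $\|x-y\|$. Since $x,y\in B(\mathcal L_0,\rho)$, the transverse parts satisfy $\|\pi(x)-\pi(y)\|\le2\rho$, where $\pi$ is the projection orthogonal to $e_1$. Hence
\[
\|x-y\|\le|x(1)-y(1)|+\|\pi(x)-\pi(y)\|\le y(1)-x(1)+2\rho\le g(y)(1)-g(x)(1),
\]
which completes the chain. The crux is that the $4\rho$ gaps were chosen precisely to absorb both transverse errors. One has to be careful about the near-end caps of $B(\mathcal L_0,\rho)$, where $x(1)$ could be slightly negative, but the same bounds apply there. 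If the crude triangle inequality turns out to be too lossy, I would instead bound $\|x-y\|^2\le(y(1)-x(1))^2+4\rho^2$ and check that $(y(1)-x(1)+2\rho)^2$ dominates it.
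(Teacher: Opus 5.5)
Your proof is correct and is essentially the paper's argument: the paper also obtains the identity from the stretching factor $|L_i|/r_i$. For the inequality, it first shows $\langle g(y)-g(x),e_1\rangle\ge\|y-x\|+4\rho$ for points on the axis (using $g(a_i)=b_i$ and the $4\rho$ gaps between projections), then loses $2\rho$ for the transverse parts, which is exactly your direct bound on $\langle\psi_i(0,x(2),\ldots,x(d)),e_1\rangle$. Your worry about end caps is unnecessary, since the $A_i$ only contain points with $0\le x(1)<r$.
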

\begin{proof}
If $x,y\in A_i$ for some $1\leq i\leq N$, then 
\[
\|g(x)-g(y)\|^2 - \|x-y\|^2 = \frac{|L_i|^2}{r_i^2}|x(1)-y(1)|^2 - |x(1)-y(1)|^2 = \tan^2\theta_i |x(1)-y(1)|^2.
\]
To prove the second statement, note that for all $1\leq i<j\leq N$,
\[
\langle g(a_j),e_1\rangle-\langle g(a_i),e_1\rangle
= \|a_j-a_i\|+4\rho(j-i),
\]
and for $x=(x(1),0,\ldots, 0)\in A_i$ ($1\leq i\leq N$),
\[
\langle g(x) - g(a_i),e_1\rangle = \|x-a_i\|.
\]
Hence if $x=(x(1),0,\ldots, 0)\in A_i$ and $y=(y(1),0,\ldots,0)\in A_j$ with $i<j$, then 
\begin{eqnarray*}
\langle g(y)-g(x), e_1\rangle
&= &\langle g(a_j), e_1\rangle -\langle g(a_i),e_1\rangle 
+ \langle g(y)-g(a_j), e_1\rangle -\langle g(x) - g(a_i),e_1\rangle\\
&\geq &\|y-x\|+4\rho.
\end{eqnarray*}
Finally, for general $x\in A_i$ and $y\in A_j$ with $i<j$, writing $x'=\big(x(1),0,\ldots,0\big)$ and $y' = \big(y(1),0,\ldots,0\big)$ and using the previous case, we obtain that 
\[
|\langle g(x)-g(y), e_1\rangle|
\geq |\langle g(x')-g(y'), e_1\rangle| - 2\rho
\geq \|x'-y'\|+2\rho \geq \|x-y\|.\qedhere
\]
\end{proof}

\smallskip

It follows from Lemma~\ref{l:gx-gy-case1} that $I(\mu_1)\leq I(\mu_0)$, so that \eqref{eq:BI-capacities-I} holds for $\kappa=0$, and we can assume that $\kappa\geq 1$. Furthermore, 
\[
I(\mu_0) - I(\mu_1) \geq \iint\limits_{A_{i_*}\times A_{i_*}} \frac{\|g(x)-g(y)\|^{d-2}-\|x-y\|^{d-2}}{\|x-y\|^{d-2}\|g(x)-g(y)\|^{d-2}}\mu_0(dx)\mu_0(dy).
\]
Let $\widetilde A = \{(x,y)\in A_{i_*}\times A_{i_*}\,:\,|x(1)-y(1)|\geq 1\}$. Then for any $(x,y)\in \widetilde A$, 
\[
\|x-y\|^2 \leq |x(1)-y(1)|^2+ 4\rho^2\leq (1+4\rho^2)|x(1)-y(1)|^2,
\]
hence
\[
\|g(x)-g(y)\|^2 \geq \big(1 + \frac{\tan^2\theta_{i_*}}{1+4\rho^2}\big)\|x-y\|^2\geq \big(1 + \frac{\sin^2\theta_{i_*}}{1+4\rho^2}\big)\|x-y\|^2.
\]
Thus, 
\begin{eqnarray*}
I(\mu_0) - I(\mu_1) &\geq &\iint\limits_{\widetilde A} \frac{\|g(x)-g(y)\|^{d-2}-\|x-y\|^{d-2}}{\|x-y\|^{d-2}\|g(x)-g(y)\|^{d-2}}\mu_0(dx)\mu_0(dy)\\
&\geq &\Big(1 - \big(1 + \frac{\sin^2\theta_{i_*}}{1+4\rho^2}\big)^{\frac{2-d}2}\Big)
\iint\limits_{\widetilde A} \|x-y\|^{2-d}\mu_0(dx)\mu_0(dy)\\
&\geq &\big(1 - 2^{\frac{2-d}{2}}\big)\frac{\sin^2\theta_{i_*}}{1+4\rho^2}\iint\limits_{\widetilde A} \|x-y\|^{2-d}\mu_0(dx)\mu_0(dy),
\end{eqnarray*}
where the last inequality follows from the concavity of the function $f(u) = 1 - (1+u)^{(2-d)/2}$ for $u\in[0,1]$. 

Now, for any $x\in A_{i_*}$, if $S_k = \{y\in A_{i_*}\,:\, |x(1)-y(1)|\in[k,k+1)\}$, then by Lemma~\ref{l:equilibrium-measure-cylinder}, $\mu_0(S_k) \geq c/r$ for some $c=c(d,\rho)$, therefore,  
\[
\int\limits_{y\in A_{i_*}:\, (x,y)\in\widetilde A} \|x-y\|^{2-d}\mu_0(dy)
\geq \sum_{k=1}^{\lfloor r_{i_*}/3\rfloor}\int_{S_k}\|x-y\|^{2-d}\mu_0(dy)
\geq \frac{c}{r}\big(\log r_{i_*}\mathds{1}_{d=3} + \mathds{1}_{d\geq 4}\big),
\]
for some $c=c(d,\rho)$. Hence 
\begin{eqnarray}\label{eq:Imu0-Imu1-case1}
I(\mu_0) - I(\mu_1) 
&\geq &c\frac{\sin^2\theta_{i_*}}{r}\big(\log r_{i_*}\mathds{1}_{d=3} + \mathds{1}_{d\geq 4}\big)\mu_0(A_{i_*})\\\nonumber
&\geq &c\frac{r_{i_*}\sin^2\theta_{i_*}}{r^2}\big(\log r_{i_*}\mathds{1}_{d=3} + \mathds{1}_{d\geq 4}\big),
\end{eqnarray}
where in the last inequality we used Lemma~\ref{l:equilibrium-measure-cylinder}. 

\smallskip

By the definition of $I_0$, $I_1$ and $I_2$ in \eqref{eq:I0-I1-I2}, $|L_{i_*}|\sin\theta_{i_*}\geq \frac{\kappa h_r}{5(N+1)^2}$ and (a) $\theta_*\leq \frac\pi4$ if $i_*\in I_1$ or (b) $\theta_*>\frac\pi4$ and $r_{i_*}\geq \kappa \log^2 r$ if $i_*\in I_2$. Thus,  if $i_*\in I_1$ then 
\begin{eqnarray}\label{eq:upper-bound-hr}
I(\mu_0) - I(\mu_1)&\geq &c\frac{\kappa^2h_r^2}{r_{i_*}r^2}\big(\log r_{i_*}\mathds{1}_{d=3} + \mathds{1}_{d\geq 4}\big)
\geq c\frac{\kappa^2h_r^2}{r^3}\big(\log r\mathds{1}_{d=3} + \mathds{1}_{d\geq 4}\big)\nonumber\\
&= &c\kappa^2\big(\frac{\log^2 r}{r^2}\mathds{1}_{d=3} + \frac{1}{r^2}\mathds{1}_{d\geq 4}\big)
\geq c\kappa^2 \big(\mathrm{cap}\big(B(\mathcal L_0,\rho)\big)\big)^{-2}\\
&= &c\kappa^2 I(\mu_0)^2,\nonumber
\end{eqnarray}
where we used that $r_{i_*}\leq 2r$, \eqref{eq:capacity-asymptotics} and \eqref{eq:capacity-variational}; 
and if $i_*\in I_2$ then
\begin{eqnarray*}
I(\mu_0) - I(\mu_1)&\geq &c\frac{\kappa\log^2 r}{r^2}\big(\log\log r\mathds{1}_{d=3} + \mathds{1}_{d\geq 4}\big)\\
&\geq &c(\kappa\log\log r) \big(\mathrm{cap}\big(B(\mathcal L_0,\rho)\big)\big)^{-2}
= c(\kappa\log\log r) I(\mu_0)^2,
\end{eqnarray*}
where we used \eqref{eq:capacity-asymptotics} and \eqref{eq:capacity-variational}. The above two cases give \eqref{eq:BI-capacities-I}, and hence Lemma~\ref{l:upper-bound-3} is proven in the case $I_1\cup I_2\neq\emptyset$. 
\end{proof}

\begin{proof}[Proof of Lemma~\ref{l:upper-bound-3} in the case $I_1\cup I_2=\emptyset$]
The overall idea of the proof is very similar to the one in the first case. However, since the projection of any $L_i$, with $i\in I_0$, onto $Ox_1$-axis is now quite small, the corresponding subset of the cylinder $B([0,re_1],\rho)$ supports only a small fraction of the equilibrium measure, so that its push-forward onto $B(L_i,\rho)$ would not make a substantial increase of the energy. To overcome this, we modify the definition of the set $A_{i_*}$ from the previous proof by making it big enough to support a sufficient amount of the equilibrium measure. 

\smallskip

By the assumption of the lemma, there exists $1\leq i_*\leq N$ such that $|L_{i_*}|\sin\theta_{i_*}\geq \frac{\kappa h_r}{N+1}$. Fix such $i_*$. Since $|L_i|\sin\theta_i < \frac{\kappa h_r}{5(N+1)^2}$ for all $i\notin I_0$, there exists a subinterval $\widehat L_{i_*}$ of $L_{i_*}$ such that $|\widehat L_{i_*}| \geq \frac{\kappa h_r}{5(N+1)^2}$ and 
\begin{equation}\label{eq:distance-widehatL}
d(\widehat L_{i_*}, L_i) \geq \frac{\kappa h_r}{5(N+1)^2},\quad\text{for all }i\notin I_0.
\end{equation}
Note also that for any $i\in I_0$, the projection of $L_i$ on the $Ox_1$-axis has length at most $\kappa\log^2 r$. Moreover, $\kappa\geq 1$, since $I_1\cup I_2=\emptyset$, and $\kappa\leq \sqrt r$, since all the endpoints $o_i$ of the line segments $L_i$ are in the ball $B(r)$. Let 
\[
\Delta = \kappa\log^5 r.
\]
From the sequence of line segments 
\[
(L_{(1)},\ldots L_{(N+1)}) = \big(\widehat L_{i_*}, L_{i_*+1},\ldots, L_{N+1}, L_{i_*-1},L_{i_*-2},\ldots, L_1\big), 
\]
we select a family of their subsegments $L_1', \ldots, L_{N'}'$ with pairwise disjoint projections on the $Ox_1$-axis as follows. 
Let $\mathcal L_1' = L_{(1)}$. Let $B_1$ be the set of points in $\R^d$ whose projection on $Ox_1$-axis is at distance smaller than $4\rho+\Delta$ from the projection of $\mathcal L_1'$ and define $\mathcal L_2' = L_{(2)}\setminus B_1$. 
For $2\leq k\leq N$, let $B_k$ be the set of points in $\R^d$ whose projection on $Ox_1$-axis is at distance smaller than $4\rho$ from the projection of $\mathcal L_k'$ and define $\mathcal L_{k+1}' = L_{(k+1)}\setminus (B_1\cup\ldots\cup B_k)$. 
Note that $\{\mathcal L_1',\ldots, \mathcal L_{N+1}'\}$ is the set of at most $N+1$ line segments. 
From these line segments we keep $\mathcal L_1'$ $(=\widehat L_{i_*})$ and those segments that are subsegments of the segments $\{L_i, i\notin I_0\}$, and delete all the other ones. We order and denote the remaining line segments by $L_1', \ldots, L_{N'}'$ so that the projection of $L_i'$ on the $Ox_1$-axis is to the left of the projection of $L_{i+1}'$ for each $i$, and let $i_*'$ be such that $L_{i_*'}' = \widehat L_{i_*}$. This way, the distance between the projections of $L_{i-1}'$ and $L_i'$ is at least $4\rho$ when $i-1, i\neq i_*'$ and is at least $4\rho+\Delta$ when $i-1=i_*'$ or $i=i_*'$. Furthermore, 
\begin{equation}\label{eq:distance-widehatL-2}
d(L_{i_*'}', L_i')\geq \frac{\kappa h_r}{5(N+1)^2},\quad \text{for all } i\neq i_*',
\end{equation}
by \eqref{eq:distance-widehatL}. 

\smallskip

Let $r'$ be the sum of $2\Delta$ and the lengths of the projections of $L_1',\ldots, L_{N'}'$ on the $Ox_1$-axis. Note that $r' \geq r - 4\rho (N+1) - \kappa\log^2r(N+1)$, in particular (e.g.\ by \eqref{eq:capacity-asymptotics}), for some $C=C(d,\rho,N)$,
\[
\mathrm{cap}\big(B([0,re_1],\rho)\big) - \mathrm{cap}\big(B([0,r'e_1],\rho)\big)\leq C\kappa\log^2 r.
\]
Hence, by the monotonicity of the capacity, it suffices to prove that 
\[
\mathrm{cap}\Big(\bigcup_{i=1}^{N'}B(L_i',\rho)\Big)
- \mathrm{cap}\big(B([0,r'e_1],\rho)\big)
\geq c\kappa \log^3r'.
\]
\emph{By abuse of notation, we omit the primes and write in the following $L_i$ instead of $L_i'$, $N$ instead of $N'$, $r$ instead of $r'$, and $i_*$ instead of $i_*'$. (Note that in the new notation $L_{i_*}$ denotes what was previously denoted by $\widehat L_{i_*}$.)}

\smallskip

In the new notation, let $\mathcal L_0 = [0,re_1]$ and $\mathcal L_1 = \bigcup_{i=1}^NL_i$. 
Consider the points $a_1=0$, 
\begin{eqnarray*}
a_i &= &\big(\sum_{k=1}^{i-1} r_k, 0,\ldots,0\big),\text{ for }2\leq i\leq i_*,\\
a_i &= &\big(\sum_{k=1}^{i-1} r_k + 
2\Delta, 0,\ldots,0\big),\text{ for }i_*+1\leq i\leq N+1.
\end{eqnarray*}
and define the sets 
\[
A_i = \big\{x\in B(\mathcal L_0,\rho)\,:\, a_i(1)\leq x(1)<a_{i+1}(1)\big\},\,\text{ for }1\leq i\leq N.
\]
Note that $A_i$ are disjoint subsets of $B(\mathcal L_0,\rho)$ and by the subadditivity of the capacity, 
\[
\mathrm{cap}\big(B(\mathcal L_0,\rho)\big) - \mathrm{cap}\big(\bigcup_{i=1}^N A_i\big)\leq C.
\]
Hence it suffices to prove that 
\[
\mathrm{cap}\big(B(\mathcal L_1,\rho)\big) - \mathrm{cap}\big(\bigcup_{i=1}^N A_i\big)\geq c\kappa \log^3r.
\]

Let $\mu_0$ be the normalized equilibrium measure of $\bigcup_{i=1}^N A_i$. By \eqref{eq:capacity-variational}, it suffices to prove that for some probability measure $\mu_1$ on $B(\mathcal L_1,\rho)$, 
\[
\frac{1}{I(\mu_1)} - \frac{1}{I(\mu_0)} \geq c\kappa  \log^3r, 
\]
where $I(\mu) = \iint \|x-y\|^{2-d}\mu(dx)\mu(dy)$, for which it suffices to show that  
\begin{equation}\label{eq:BI-capacities-I-case2}
I(\mu_0) - I(\mu_1) \geq c\kappa\log^3 r I(\mu_0)^2.
\end{equation}
As in the case $I_1\cup I_2\neq\emptyset$, we identify $\mu_1$ as a push-forward of $\mu_0$ by some function $g: \bigcup_{i=1}^N A_i\to B(\mathcal L_1,\rho)$.
We write $\theta_i$ for the angle in $[0,\frac\pi2]$ between $L_i$ and $\R e_1$. Let $r_i = |L_i|\cos\theta_i$ be the length of the projection of $L_i$ onto the $Ox_1$-axis. By definition, $\sum_{i=1}^Nr_i +2\Delta= r$. 
Let $b_i$ and $b_i'$ be the end-vertices of the line segment $L_i$ so that $b_i(1)\leq b_i'(1)$. Fix some rotations $\psi_1, \ldots, \psi_N$ of $\R^d$ such that 
\[
\overrightarrow{b_ib_i'} = |L_i|\psi_i(e_1),
\]
define the function $g:\bigcup_{i=1}^N A_i\to B(\mathcal L_1,\rho)$ by
\begin{eqnarray*}
g(x) &= &b_i + \psi_i\Big(\big(\frac{|L_i|}{r_i}\big(x(1)-a_i(1)\big),x(2),\ldots, x(d)\big)\Big),\,\, \text{for } x\in A_i,\,\, i\neq i_*,\\
g(x) &= &b_{i_*} + \psi_{i_*}\Big(\big(\frac{|L_{i_*}|}{r_{i_*} +2\Delta}\big(x(1)-a_{i_*}(1)\big),x(2),\ldots, x(d)\big)\Big),\,\,\text{for } x\in A_{i_*},\\
\end{eqnarray*}
and consider the measure $\mu_1(\cdot) = \mu_0(g^{-1}(\cdot))$. It is a probability measure on $B(\mathcal L_1,\rho)$ and, by the change of variables formula, 
\[
I(\mu_1) = \iint \|x_1-y_1\|^{2-d}\mu_1(dx_1)\mu_1(dy_1) = \iint \|g(x)-g(y)\|^{2-d}\mu_0(dx)\mu_0(dy).
\]
Thus, 
\begin{eqnarray*}
I(\mu_0) - I(\mu_1) &= &\iint \frac{\|g(x)-g(y)\|^{d-2}-\|x-y\|^{d-2}}{\|x-y\|^{d-2}\|g(x)-g(y)\|^{d-2}}\mu_0(dx)\mu_0(dy)\\
&= &\sum_{i,j=1}^N\,\, \iint\limits_{A_i\times A_j} \frac{\|g(x)-g(y)\|^{d-2}-\|x-y\|^{d-2}}{\|x-y\|^{d-2}\|g(x)-g(y)\|^{d-2}}\mu_0(dx)\mu_0(dy).
\end{eqnarray*}
If $x,y\in A_{i_*}$, then
\begin{eqnarray*}
\|g(x)-g(y)\|^2 - \|x-y\|^2 &= &\frac{|L_{i_*}|^2}{(r_{i_*}+2\Delta)^2}|x(1)-y(1)|^2 - |x(1)-y(1)|^2\\
&\geq &c\,\frac{h_r^2}{\log^{10}r}\,|x(1)-y(1)|^2,
\end{eqnarray*}
and if $x,y\in A_i$, for $i\neq i_*$, then $\|g(x) - g(y)\|\geq \|x-y\|$. Thus, by a similar calculation, which gave \eqref{eq:Imu0-Imu1-case1} in the case $I_1\cup I_2\neq\emptyset$, we obtain that 
\begin{multline*}
\sum_{i=1}^N\,\, \iint\limits_{A_i\times A_i} \frac{\|g(x)-g(y)\|^{d-2}-\|x-y\|^{d-2}}{\|x-y\|^{d-2}\|g(x)-g(y)\|^{d-2}}\mu_0(dx)\mu_0(dy)\\
\geq \iint\limits_{A_{i_*}\times A_{i_*}} \frac{\|g(x)-g(y)\|^{d-2}-\|x-y\|^{d-2}}{\|x-y\|^{d-2}\|g(x)-g(y)\|^{d-2}}\mu_0(dx)\mu_0(dy)\\
\begin{aligned}
&\geq 
c\frac{1}{r}\big(\log\Delta \,\mathds{1}_{d=3} + \mathds{1}_{d\geq 4}\big)\mu_0(A_{i_*})
\stackrel{(a)}\geq
c\frac{\Delta}{r^2}\big(\log\Delta \,\mathds{1}_{d=3} + \mathds{1}_{d\geq 4}\big)\\
&\stackrel{(b)}\geq c\kappa\log^3 r\Big(\frac{\log^2r}{r^2}\mathds{1}_{d=3} + \frac{1}{r^2}\mathds{1}_{d\geq 4}\Big)
\stackrel{(c)}\geq c\kappa\log^3 r I(\mu_0)^2,
\end{aligned}
\end{multline*}
where in ($a$) we used that, by Lemma~\ref{l:equilibrium-measure-cylinder}, $\mu_0(A_{i_*}) \geq c\Delta/r$, in ($b$) the definition of $\Delta$, and in ($c$) \eqref{eq:capacity-asymptotics} and \eqref{eq:capacity-variational}.

\smallskip

In the case $I_1\cup I_2\neq\emptyset$, the function $g$ is a dilation, that is $\|g(x)-g(y)\|\geq \|x-y\|$ for all $x,y$. Thus, all the integrals over $A_i\times A_j$ are nonnegative. In the present case, the situation is different. One can still show, similarly to Lemma~\ref{l:gx-gy-case1}, that for all $i,j\neq i_*$, if $x\in A_i$ and $y\in A_j$, then $\|g(x)-g(y)\|\geq \|x-y\|$, hence the integral over $A_i\times A_j$ is nonnegative; however, if $x\in A_{i_*}$ and $y\in A_i$ for some $i\neq i_*$, then it is possible that $\|g(x)-g(y)\|<\|x-y\|$. We show that for all $i\neq i_*$,
\begin{equation}\label{eq:integral-cross-term}
\iint\limits_{A_{i_*}\times A_i} \frac{\|x-y\|^{d-2}-\|g(x)-g(y)\|^{d-2}}{\|x-y\|^{d-2}\|g(x)-g(y)\|^{d-2}}\mu_0(dx)\mu_0(dy)\leq C\kappa\,I(\mu_0)^2,
\end{equation}
which will complete the proof of \eqref{eq:BI-capacities-I-case2}. We use the following modification of Lemma~\ref{l:gx-gy-case1}. 
\begin{lemma}\label{l:gx-gy-case2}
For all $x\in A_{i_*}$ and $y\in A_i$ with $i\neq i_*$, 
\[
\|g(x) - g(y)\|\geq |\langle g(x)-g(y), e_1\rangle|\geq \|x-y\|-\Delta.
\]
\end{lemma}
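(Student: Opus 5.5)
The plan is to mimic the proof of Lemma~\ref{l:gx-gy-case1}: first reduce to points on the axis, compare $e_1$-coordinates of images, and then absorb the transverse coordinates at a cost of $2\rho$. The first inequality $\|g(x)-g(y)\|\geq |\langle g(x)-g(y),e_1\rangle|$ is trivial, so only the second needs work.

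\textbf{Step 1: axial points, block starts.} Take $x=(x(1),0,\ldots,0)\in A_{i_*}$ and $y=(y(1),0,\ldots,0)\in A_i$ with $i\neq i_*$. By construction $\langle g(a_j),e_1\rangle = b_j(1)$. Consecutive retained segments have projections separated by at least $4\rho$, and by at least $4\rho+\Delta$ next to $i_*$. For $j<k$ I would therefore compare
\[
b_k(1)-b_j(1) \quad\text{with}\quad a_k(1)-a_j(1).
\]
The $a$'s are spaced by the $r_l$'s, plus an extra $2\Delta$ inserted at block $i_*$. The $b$'s are spaced by the $r_l$'s, plus gaps of at least $4\rho$ and an extra $\Delta$ on each side of $i_*$ (that is, at least $2\Delta$ in total when the index range straddles $i_*$). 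The one discrepancy is the following. Inside $A_{i_*}$ the map $g$ compresses the $e_1$-direction: a length $r_{i_*}+2\Delta$ in $A_{i_*}$ is sent to a segment whose $e_1$-projection has length only $r_{i_*}$.

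\textbf{Step 2: axial points, general position.} For $x\in A_{i_*}$ I would write
\[
\langle g(x)-g(a_{i_*}),e_1\rangle = \frac{r_{i_*}}{r_{i_*}+2\Delta}\bigl(x(1)-a_{i_*}(1)\bigr).
\]
This differs from $x(1)-a_{i_*}(1)$ by at most $2\Delta\cdot\frac{x(1)-a_{i_*}(1)}{r_{i_*}+2\Delta}\leq 2\Delta$. More precisely, it loses $t\cdot 2\Delta$, where $t\in[0,1)$ is the relative position of $x$ in $A_{i_*}$. Suppose $i>i_*$. Then
\[
\langle g(y)-g(x),e_1\rangle \geq (y(1)-x(1)) + 4\rho + \Delta - (\text{loss from the right end of } A_{i_*}),
\]
and the loss from the right end is $(1-t)\cdot 2\Delta$ measured against the gap. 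Summing up, the total defect is at most $2\Delta-\Delta=\Delta$. The gap $4\rho$ survives, so
\[
\langle g(y)-g(x),e_1\rangle\geq y(1)-x(1)+4\rho-\Delta.
\]
The case $i<i_*$ is symmetric, using the $\Delta$ gap on the left.

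\textbf{Step 3: general points.} For general $x,y$, replace them by their axial projections $x',y'$. Since $g$ acts on transverse coordinates by a rotation, this changes the $e_1$-inner product by at most $2\rho$. Also $\|x'-y'\|+2\rho\geq\|x-y\|$, which holds because the transverse displacement is at most $2\rho$. Combining these gives
\[
|\langle g(x)-g(y),e_1\rangle|\geq \|x-y\|-\Delta.
\]

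The main obstacle is the bookkeeping in Step 2: checking that the $\Delta$-buffers placed around $\widehat L_{i_*}$ in the selection procedure really offset the $2\Delta$ compression on the correct side. This also covers the degenerate cases where $i_*$ is the first or last block, where one buffer is absent but the corresponding side never occurs.
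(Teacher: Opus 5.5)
Your proposal is correct and follows essentially the paper's argument: compare $e_1$-coordinates of images of axial points using the gaps of at least $4\rho$ (at least $4\rho+\Delta$ next to $i_*$) and the compression factor $\frac{r_{i_*}}{r_{i_*}+2\Delta}$, then pass to general points at a cost of $2\rho$ exactly as in Lemma~\ref{l:gx-gy-case1}. The only cosmetic difference is in the bookkeeping for $i>i_*$: you measure the compression loss from the right end of $A_{i_*}$, whereas the paper measures from $a_{i_*}$ in both cases, offsetting the inserted $2\Delta$ in the $a$'s against the $+\Delta$ gap and noting that the compression then only helps; both routes give the same axial bound $\|x-y\|+4\rho-\Delta$.
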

\begin{proof}
Note that 
\[
\big|\langle g(a_i)-g(a_{i_*}),e_1\rangle\big| \geq 
\left\{\begin{array}{ll}
\|a_i-a_{i_*}\| + 4\rho - \Delta &\text{ if }i>i_*\\[4pt]
\|a_i-a_{i_*}\| + 4\rho + \Delta &\text{ if }i<i_*
\end{array}\right.
\]
and for $x=(x(1),0,\ldots, 0)\in A_i$, 
\[
\langle g(x)-g(a_i),e_1\rangle = 
\left\{\begin{array}{ll}
\|x-a_i\| &\text{ if }i\neq i_*\\[4pt]
\frac{r_{i_*}}{r_{i_*}+2\Delta}\|x- a_{i_*}\|\in\big[\|x-a_{i_*}\| - 2\Delta, \|x-a_{i_*}\|\big] &\text{ if }i=i_*.
\end{array}\right.
\]
The rest of the proof is the same as the proof of Lemma~\ref{l:gx-gy-case1} and we omit the details. 
\end{proof}

\smallskip

We decompose the integral in \eqref{eq:integral-cross-term} into the sum of the integrals over 
\begin{eqnarray*}
\widetilde A_1 &= &\{(x,y)\in A_{i_*}\times A_i\,:\,\|x-y\|\leq r^{4/5}\},\\
\widetilde A_2 &= &\{(x,y)\in A_{i_*}\times A_i\,:\,\|x-y\|> r^{4/5}\}.
\end{eqnarray*}
For $(x,y)\in \widetilde A_1$, by \eqref{eq:distance-widehatL-2} and Lemma~\ref{l:gx-gy-case2},
\begin{eqnarray*}
\|g(x) - g(y)\|^2 &\geq &\langle g(x)-g(y),e_1\rangle^2 + \Big(\frac{\kappa h_r}{5(N+1)^2}\Big)^2\\
&\geq &\big((\|x-y\| - \Delta)_+\big)^2 + c \kappa r \geq \|x-y\|^2,
\end{eqnarray*}
since $2\Delta \|x-y\| \ll \kappa r$. 
Hence, 
\[
\iint\limits_{\widetilde A_1} \frac{\|x-y\|^{d-2}-\|g(x)-g(y)\|^{d-2}}{\|x-y\|^{d-2}\|g(x)-g(y)\|^{d-2}}\mu_0(dx)\mu_0(dy)
\leq 0.
\]
For $(x,y)\in \widetilde A_2$, by Lemma~\ref{l:gx-gy-case2}, $\|g(x)-g(y)\|
\geq \|x-y\|-\Delta\geq \frac12 r^{4/5}$, for all large $r$, since $\kappa \leq \sqrt r$. Thus, 
\begin{multline*}
\iint\limits_{\widetilde A_2} \frac{\|x-y\|^{d-2}-\|g(x)-g(y)\|^{d-2}}{\|x-y\|^{d-2}\|g(x)-g(y)\|^{d-2}}\mu_0(dx)\mu_0(dy)\\
\leq \Big(\big(1+\frac{\Delta}{\frac12 r^{4/5}}\big)^{d-2} - 1\Big)\frac1{r^{4/5}}\mu_0(A_{i_*})\mu_0(A_i)
\stackrel{(a)}\leq C\frac{\Delta^2}{r^{13/5}}
\stackrel{(b)}\leq C\kappa\, I(\mu_0)^2,
\end{multline*}
where in (a) we used that $\mu_0(A_{i_*})\leq C\Delta/r$ (by Lemma~\ref{l:equilibrium-measure-cylinder}) and $\mu_0(A_i)\leq 1$, and in (b) that $\kappa\leq \sqrt r$, \eqref{eq:capacity-asymptotics} and \eqref{eq:capacity-variational}.
Putting the two estimates together gives \eqref{eq:integral-cross-term}, which completes the proof of Lemma~\ref{l:upper-bound-3} in the case $I_1\cup I_2=\emptyset$. 
\end{proof}

\begin{remark}\label{rem:hr-in-d3}
The only place in the proof of Lemma~\ref{l:upper-bound-3} for the Brownian interlacements, where the precise choice of $h_r$ ($=\sqrt{r\log r}$ for $d=3$ and $=\sqrt r$ for $d\geq 4$) is crucially used, is the final estimate \eqref{eq:upper-bound-hr} of the difference  $I(\mu_0)-I(\mu_1)$ in the case $I_1\neq \emptyset$. \end{remark}

\section{Proof of the lower bound}\label{sec:lower-bound}
In this section, we prove the lower bound in Theorem~\ref{thm:main}.

\smallskip

For $x\in \partial B(r)$ and $1\leq i\leq N$, consider the sets
\[
B^x_i = \Big\{z\in \R^d\,:\, 3^ih_r\leq d(z,\R x)\leq (3^i+1)h_r\text{ and } \langle z,x\rangle \in \big[\frac{i - 1/10}{N+1}, \frac{i+1/10}{N+1}\big]\Big\},
\]
and the corresponding sets of piecewise linear paths from $0$ to $x$
\[
\mathcal L^x = \Big\{\bigcup_{i=1}^{N+1} [o_{i-1}, o_i]\,:\, o_0=0, o_{N+1}=x \text{ and }o_i\in B^x_i,\,1\leq i\leq N\Big\}. 
\]
We also define 
\[
\mathcal L = \bigcup_{x\in \partial B(r)} \mathcal L^x. 
\]
For $L^x = \bigcup_{i=1}^{N+1}[x_{i-1},x_i]$ and $L^y = \bigcup_{i=1}^{N+1}[y_{i-1},y_i]$, define 
\[
d_H(L^x,L^y) = 
\max_{0\leq i\leq N}\big\{d(x_i,\ell^\infty_{y_i,y_{i+1}}), d(y_i, \ell^\infty_{x_i,x_{i+1}})\big\}\vee
\max_{1\leq i\leq N+1}\big\{d(x_i,\ell^\infty_{y_{i-1},y_i}), d(y_i, \ell^\infty_{x_{i-1},x_i})\big\}.
\]
Recall that $\mathcal V = \R^d\setminus \mathcal C$ is the vacant set of $\mathcal C$. 
\begin{lemma}\label{l:lower-bound-1}
There exist $c=c(d,N,\mathrm{law}(\mathcal C))>0$ and $C=C(d,N,\mathrm{law}(\mathcal C))<\infty$ such that for all $L\in \mathcal L$, 
\[
cf(r)\leq \mathsf P[L\subseteq \mathcal V]\leq Cf(r).
\]
\end{lemma}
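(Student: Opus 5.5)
By rotation invariance we may take $x=re_1$, so $L=\bigcup_{i=1}^{N+1}[o_{i-1},o_i]$ with $o_0=0$, $o_{N+1}=re_1$ and $o_i\in B^x_i$. Although $B^x_i$ is written with $\langle z,x\rangle$, we read the constraint as $z(1)\in r[\frac{i-1/10}{N+1},\frac{i+1/10}{N+1}]$ and $d(z,\R e_1)\in[3^ih_r,(3^i+1)h_r]$. The geometric facts we use are: the projections of the segments $L_i=[o_{i-1},o_i]$ onto the $Ox_1$-axis have disjoint interiors, each has length between $cr$ and $Cr$, and the transversal displacement of each segment is at most $Ch_r$. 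Consequently $|L_i|-r_i\le Ch_r^2/r$, where $r_i$ is the projected length. This excess equals $C$ in case \eqref{eq:delta-beta-h-1} and $C\log r$ in case \eqref{eq:delta-beta-h-2}. In each model $\mathsf P[L\subseteq\mathcal V]=\exp(-\alpha\Phi(B(L,\rho)))$ and $f(r)=\exp(-\alpha\Phi(B([0,re_1],\rho)))$, where $\Phi$ is, respectively, the expected volume \eqref{eq:BM-mu}, the averaged projected volume \eqref{eq:PC-mu-rho}, or the capacity \eqref{eq:BI-capacity-rho}. So it suffices to prove that $|\Phi(B(L,\rho))-\Phi(B([0,re_1],\rho))|\le C$.

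\emph{Upper bound on $\Phi(B(L,\rho))$.} By subadditivity of each $\Phi$, we have $\Phi(B(L,\rho))\le\sum_i\Phi(B(L_i,\rho))$.
\begin{itemize}
\item For the Boolean model, Lemma~\ref{l:volume-cylinder} gives $\sum_i\kappa_{d-1}\mathsf E[\rho^{d-1}]|L_i|+C$. Since $\sum_i|L_i|\le r+C$, this is at most $\Phi(B([0,re_1],\rho))+C$.
\item For Poisson cylinders, the same holds after projection, because projecting does not increase the length excess.
\item For Brownian interlacements, one cannot add the capacities of the pieces, since that loses a constant fraction. Instead, we bound $\mathrm{cap}(B(L,\rho))$ by comparing it with a straightened set. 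Map the polygon onto $[0,(\sum|L_i|)e_1]$ piecewise isometrically, unfolding each segment along $e_1$ in the manner of the map $g$ of Section~\ref{sec:upper-bound-BI-2}, but in the reverse direction. Distinct segments of $L$ meet only at joints, at angles bounded away from $\pi$, so the unfolding map is $C$-Lipschitz-close to an isometry in the sense that $\|x-y\|\ge\|g(x)-g(y)\|-C$ for points in different pieces and equality holds within a piece. Applying the energy comparison \eqref{eq:capacity-variational} to the equilibrium measure of $B(L,\rho)$ pushed forward, and handling the short-range near-joint contributions (which carry mass $O(1/r)$ by Lemma~\ref{l:equilibrium-measure-cylinder}), we expect $\mathrm{cap}(B(L,\rho))\le\mathrm{cap}(B([0,(r+Ch_r^2/r)e_1],\rho))+C$. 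Lemma~\ref{l:capacity-cylinder-d3} in $d=3$, and \eqref{eq:capacity-asymptotics} in $d\ge4$ (where the excess is $O(1)$), then bound this by $\mathrm{cap}(B([0,re_1],\rho))+C$.
\end{itemize}

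\emph{Lower bound on $\Phi(B(L,\rho))$.} This is the reverse comparison, and it is essentially contained in the arguments for Lemma~\ref{l:upper-bound-3}. For the Boolean and cylinder models, slicing by $H_{a,b}$ as in that proof gives $\lambda_d(H_{a,b}\cap B(L,\rho))\ge\lambda_d(H_{a,b}\cap B([0,re_1],\rho))$ for all slabs, up to an additive constant. For Brownian interlacements, we rerun the push-forward construction of Section~\ref{sec:upper-bound-BI-2} (the case $I_1\cup I_2\neq\emptyset$ with $\kappa=0$). Lemma~\ref{l:gx-gy-case1} shows that $g$ does not decrease distances, so $I(\mu_1)\le I(\mu_0)$ and $\mathrm{cap}(B(L,\rho))\ge\mathrm{cap}(B([0,re_1],\rho))-C$.

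\emph{Main obstacle.} The hard step is the capacity upper bound in $d=3$. There we must show that bending the segment by angles of order $h_r/r$ increases the capacity by only $O(1)$, even though the total length increases by $\log r$. We plan to handle this via Lemma~\ref{l:capacity-cylinder-d3} combined with the distance-contracting map described above. The key estimate is that pairs of points from different pieces satisfy $\|g(x)-g(y)\|\le\|x-y\|+C$ after unfolding, so that the energy changes by at most $C\,I(\mu)^2$. This can be shown by the same $r^{4/5}$ near/far splitting used in the proof of \eqref{eq:integral-cross-term}.
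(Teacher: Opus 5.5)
Your Boolean-model bounds and the three lower bounds are fine; the Brownian lower bound is just Lemma~\ref{l:upper-bound-3} with $\kappa=0$, as in the paper. Both nontrivial upper bounds, however, have genuine gaps.

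\textbf{Brownian interlacements.} Your key estimate points the wrong way. Push the equilibrium measure $\mu$ of $B(L,\rho)$ forward under the unfolding map $g$. This bounds $\mathrm{cap}(B(L,\rho))$ from above only if $I(g_*\mu)\le I(\mu)$ up to a small error, i.e.\ only if $\|g(x)-g(y)\|\ge\|x-y\|$. An inequality $\|g(x)-g(y)\|\le\|x-y\|+C$ bounds $I(g_*\mu)$ from below and says nothing about $\mathrm{cap}(B(L,\rho))$.

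Your estimate is also false exactly in $d=3$. For $x$ near $0$ and $y$ near $re_1$ one has $\|g(x)-g(y)\|-\|x-y\|\approx\sum_i|L_i|-r\ge c\,h_r^2/r=c\log r$. The unfolding map is not contracting but expanding, and that is what makes this step easy.

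The missing idea is the paper's. Trim each $L_i$ to a subsegment $\widetilde L_i=[u_i,v_i]$ so that consecutive $\rho$-neighbourhoods are disjoint and $\|v_i-u_{i+1}\|\le D$; this costs $O(1)$ by subadditivity. Then map each $B(\widetilde L_i,\rho)$ rigidly onto $A_i=B([a_i,b_i],\rho)$, with the $A_i$ laid out consecutively along $e_1$ with gaps $D+4\rho$. Since chord $\le$ arc length, $\|x-y\|\le\|g(x)-g(y)\|-4\rho(j-i)$ for $x\in\widetilde L_i$, $y\in\widetilde L_j$, $i<j$. The gaps absorb the $2\rho$ errors on the full neighbourhoods, so $g$ never decreases distances. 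Hence
$\mathrm{cap}(\bigcup_iB(\widetilde L_i,\rho))\le\mathrm{cap}(\bigcup_iA_i)\le\mathrm{cap}(B([0,Re_1],\rho))$
holds exactly. This needs no energy error terms, no $r^{4/5}$ splitting, and no appeal to Lemma~\ref{l:equilibrium-measure-cylinder}, which concerns straight cylinders rather than $B(L,\rho)$. Since $R\le r+C\log r$, Lemma~\ref{l:capacity-cylinder-d3} then absorbs the whole length excess.

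\textbf{Poisson cylinders.} The claim that projecting does not increase the length excess is false: a linear contraction can shrink the chord while keeping the transversal displacement. Suppose $\phi(re_1)$ is nearly parallel to $e_1$. Then the projected chord $[0,\pi(\phi(re_1))]$ is short, while the projected vertices $\pi(\phi(o_i))$ can still lie at distance of order $h_r$ from it. In that case $\sum_i|\pi((L_i)_\phi)|-\|\pi(\phi(re_1))\|$ is of order $h_r^2/\|\pi(\phi(re_1))\|$, and of order $h_r$ when the chord is shorter than $h_r$. So the difference of projected volumes is not $O(1)$ pointwise in $\phi$.

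It is bounded only after averaging over $SO_d$, and your proposal lacks this step. The paper splits $SO_d$ into the sets $\Phi_k=\{\phi:\|\pi(\phi(x_{N+1}))\|\in[2^{k-1}h_r,2^kh_r)\}$, which satisfy $\nu(\Phi_k)\le C(2^kh_r/r)^{d-1}$. The resulting sum $\sum_k\frac{r}{2^kh_r}(2^kh_r/r)^{d-1}$ is bounded precisely because $d\ge3$.
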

\begin{lemma}\label{l:lower-bound-2}
There exist $c=c(d,N,\mathrm{law}(\mathcal C))>0$, $C=C(d,N,\mathrm{law}(\mathcal C))<\infty$ and $\epsilon=\epsilon(d,N)>0$ such that for all 
$L\in \mathcal L^{re_1}$ and $L'\in \mathcal L^v$, with $v\in \partial B(r)\cap B(re_1,\epsilon r)$, 
\[
\mathsf P\big[L'\subseteq \mathcal V\,|\,L\subseteq \mathcal V\big]\leq C\exp\Big(-\frac{c}{\delta_r}\min\big(d_H(L,L'), 1\big)\Big).
\]
\end{lemma}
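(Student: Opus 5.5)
Write $\Phi(K)$ for the functional in the exponent of the avoidance probability: $\alpha\mathsf E[\lambda_d(B(K,\rho))]$ by \eqref{eq:BM-mu}, the averaged projected volume in \eqref{eq:PC-mu-rho}, and $\alpha\,\mathrm{cap}(B(K,\rho))$ by \eqref{eq:BI-capacity-rho}. Then
\[
\mathsf P[L'\subseteq\mathcal V\,|\,L\subseteq\mathcal V]=\exp\big(-(\Phi(L\cup L')-\Phi(L))\big),
\]
so it suffices to prove, with $D=d_H(L,L')$,
\[
\Phi(L\cup L')-\Phi(L)\geq \frac{c}{\delta_r}\min(D,1)-C .
\]
The plan is to reduce this to a single pair of corresponding segments. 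By the definition of $d_H$, up to swapping the roles of $L$ and $L'$ (the statement is not symmetric, so one has to check both directions), some vertex of $L'$ lies at distance $D$ from the line through the corresponding segment of $L$. Let $[y_{i-1},y_i]$ be a segment of $L'$ with an endpoint at distance $\ge D$ from $\ell=\ell^\infty_{x_{i-1},x_i}$.

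\emph{Geometric step.} Because $o_i\in B_i^x$, the projections of consecutive segments onto $\R x$ have length $\asymp r/(N+1)$. The segments make angles $O(h_r/r)$ with $\R x$, and $v$ is within $\epsilon r$ of $re_1$. From this I will show that a middle piece $\widetilde L'$ of $[y_{i-1},y_i]$, of length $\geq c r$, has the following properties: it is at distance $\geq c\,r/(N+1)$ (in the $e_1$-direction) from every segment of $L$ other than $[x_{i-1},x_i]$, and its far endpoint still has distance $\geq cD$ from $\ell$. Using the trisection trick from the proof of Lemma~\ref{l:volume-difference}, one can further arrange $d(\widetilde L',\ell)\geq c\min(D,1)$ at one end, so that the $\delta_r$-scale deviation is visible along length $\asymp r$.

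\emph{Model-by-model lower bound.} For Poisson-Boolean models, $\Phi(L\cup L')-\Phi(L)\geq \alpha\mathsf E[\lambda_d(B(\widetilde L',\rho)\setminus B(\ell,\rho))]$ up to an $O(1)$ boundary error, since the other segments of $L$ are far from $\widetilde L'$. Lemma~\ref{l:volume-difference} then gives $\geq c\,r\min(D,1)=c\min(D,1)/\delta_r$. For Poisson cylinders in $d\geq3$ I will argue as in the proof of Lemma~\ref{l:upper-bound-3}. First restrict to the set of rotations $\phi$ of positive $\nu$-measure for which $\phi(e_1)$ makes an angle bounded away from $0$ with $H$. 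For each such $\phi$, use Lemma~\ref{l:vector-projections} to choose a coordinate hyperplane on which lengths and the transverse distance to $\ell$ are preserved up to constants. Then apply the $(d-1)$-dimensional version of Lemma~\ref{l:volume-difference} to the projected segments; since $d-1\geq2$ this is legitimate.

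\emph{Brownian interlacements.} Here $\Phi(L\cup L')-\Phi(L)=\alpha\,\mathsf P_{\mu_R}[H_{B(L',\rho)}<\infty,\,H_{B(L,\rho)}=\infty]$, and I will bound this below by the probability of hitting $B(\widetilde L',\rho)$ while avoiding $B(L,\rho)$. The first substep is to show that other segments of $L$ can be ignored. After leaving a neighbourhood of $\ell$ of size $\asymp r$ at the appropriate scale, the probability of then hitting them is at most a constant $<1$ times the relevant quantity by transience. Together with Lemma~\ref{l:equilibrium-measure-cylinder}, this reduces the bound to the two-cylinder estimate
\[
\mathrm{cap}\big(B(\ell_r,\rho)\cup B(\widetilde L',\rho)\big)-\mathrm{cap}\big(B(\ell_r,\rho)\big)\geq \frac{c}{\delta_r}\min(D,1)-C,
\]
where $\ell_r$ is a segment of $\ell$ of length $\asymp r$. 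This two-cylinder estimate is essentially the bound underlying the visibility window $\delta_r$ in \cite{MS-Visibility-AIHP}, including the $(\log r)^2/r$ scale in $d=3$, and I would import it (or redo it by the energy/push-forward comparison used in Section~\ref{sec:upper-bound-BI-2}). This step is the main obstacle. The deviation is only of order $\delta_r$, so the decoupling from the rest of $L$ has to cost only $O(1)$ rather than a multiplicative constant in the exponent. I would try to get this by applying the last-exit decomposition only to the part of the equilibrium measure of $B(\widetilde L',\rho)\setminus B(\ell,\rho)$ supported in the middle third.
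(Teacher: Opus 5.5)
Your Poisson--Boolean argument is the paper's: a middle piece of the offending segment, isolated from the other segments, plus Lemma~\ref{l:volume-difference}. Your cylinder argument is essentially the paper's too. By Haar invariance, the averaged projected volume equals $\frac1d\sum_s$ of the averaged volumes of projections onto the coordinate hyperplanes $H_s$, and Lemma~\ref{l:vector-projections} then picks a good $s_0$ for \emph{every} $\phi$. So no restriction on $\phi$ is needed; as phrased, your angle condition also points the wrong way, since lengths survive projection onto $H$ when $\phi(e_1)$ stays away from $\pm e_1$.

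You also do not need to check both orientations. By Lemma~\ref{l:lower-bound-1}, $\mathsf P[L\subseteq\mathcal V]$ and $\mathsf P[L'\subseteq\mathcal V]$ are both comparable to $f(r)$. Hence the two conditional probabilities agree up to a constant, and it suffices to show $\Phi(L\cup L')-\min(\Phi(L),\Phi(L'))\geq\frac{c}{\delta_r}\min(d_H(L,L'),1)$. This symmetrization is exactly what the interlacement case needs.

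The genuine gap is the Brownian interlacement case, which you leave open. Importing a two-cylinder estimate would not close it. The quantity $\mathsf P_{\mu_R}[H_{B(L',\rho)}<\infty,\,H_{B(L,\rho)}=\infty]$ decreases as the avoided set grows, so passing from $B(L,\rho)$ to $B(\ell_r,\rho)\subseteq B(L,\rho)$ bounds the increment from \emph{above}. Everything therefore rests on the decoupling, which you do not supply. Your diagnosis of that step is also off: losing a multiplicative constant in the increment is harmless, since $c$ is arbitrary.

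The missing idea is the strong Markov bound $\mathrm{cap}(A\cup B)-\mathrm{cap}(A)\geq\mathsf P_{\mu_R}[H_S<H_A]\inf_{z\in S}\mathsf P_z[H_A=\infty]$ for $S\subseteq B\setminus A$. It splits the increment into two factors:
a hitting factor of order $\varphi(r)$, equal to $r/\log r$ if $d=3$ and $r$ if $d\geq4$;
an escape factor of order $\psi(r)\min(d_H(L,L'),1)$, with $\psi(r)=1/\log r$ if $d=3$ and $1$ if $d\geq4$.
Since $\varphi(r)\psi(r)=1/\delta_r$, this gives the right order.

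The escape factor is where the rest of the path is discarded, at only multiplicative cost. Let $[x,x']$ and $[y,y']$ be the pieces of $[x_{i_0-1},x_{i_0}]$ and $[y_{i_0-1},y_{i_0}]$ between fractions $5/8$ and $7/8$. Let $S$ be the points of $B([x,x'],\rho)$ at distance $\geq\frac12 d([x,x'],[y,y'])$ from $B(L',\rho)$.
\begin{enumerate}
\item From $z\in S$, the projection orthogonal to $\ell^\infty_{y,y'}$ is a $(d-1)$-dimensional Brownian motion.
\item It leaves the tube of radius $r^{4/5}$ before hitting $B(\ell^\infty_{y,y'},\rho)$ with probability $\geq c\psi(r)\min(d_H,1)$. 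This is the source of the second $\log r$ in $\delta_r$ when $d=3$.
\item Up to a negligible error, this happens before the motion gets to distance $r/10(N+1)$ from $S$.
\item Since $B(L',\rho)\subseteq B(\ell^\infty_{y,y'},r^{3/5})$, the motion then escapes to infinity with probability $\geq c$.
\end{enumerate}

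The hitting factor is where your fixed orientation would cause trouble. You would have to show that the part of $B(\widetilde L',\rho)$ outside $B(L,\rho)$ is reached before $B(L,\rho)$, which means ruling out shielding of a protrusion of width only $\asymp d_H$. The paper sidesteps this via the $\min$.
\begin{enumerate}
\item Brownian motion from $\mu_R$ hits $B([x,x'],2\rho)\cup B([y,y'],2\rho)$ before the rest of $B(L,2\rho)\cup B(L',2\rho)$ with probability $\geq c\varphi(r)$. To see this, enter the fat tube $B([0,re_1],2\epsilon r)$ at the right abscissa using Lemma~\ref{l:equilibrium-measure-cylinder}, then apply \eqref{eq:hitting-probability-capacity} with $\epsilon$ small.
\item Hence one of the two middle tubes is hit first with probability $\geq\frac c2\varphi(r)$.
\item From that tube's boundary, the corresponding set $S$ is reached before the other path's $\rho$-tube with probability $\geq c$.
\end{enumerate}
Whichever tube wins decides which of the two increments is large, and by the symmetrization that is enough.
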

We first show how the lemmas imply the lower bound in Theorem~\ref{thm:main} and then prove them for each of the three models separately. 
Let $S= \partial B(r)\cap B(re_1,\frac12\epsilon r)$, $S' = \partial B(r)\cap B(re_1,\epsilon r)$, and consider 
\[
X = \int_S\int_{B_1^x}\ldots\int_{B_N^x}\mathds{1}_{\mathrm{vis}(0,x_1,\ldots,x_N,x)}\sigma(dx)dx_1\ldots dx_N,
\]
where $\sigma(\cdot)$ is the surface measure on $\partial B(r)$. Note that $P_{\mathrm{vis}}^N(r)\geq \mathsf P(X>0)$. We use the second moment method to bound $\mathsf P(X>0)$. 
By Lemma~\ref{l:lower-bound-1}, 
\begin{eqnarray*}
\mathsf E[X] &=  &\int_S\int_{B_1^x}\ldots\int_{B_N^x}\mathsf P[\mathrm{vis}(0,x_1,\ldots,x_N,x)]\sigma(dx)dx_1\ldots dx_N\\
&\geq &cf(r)
\sigma(S)\prod_{i=1}^N\lambda_d(B_i^{re_1})
\geq c r^{d-1} (rh_r^{d-1})^N f(r).
\end{eqnarray*}
Furthermore, by rotation invariance, 
\begin{eqnarray*}
\mathsf E[X^2] &= &\int_{S^2}\int_{B_1^x\times\ldots\times B_N^x}\int_{B_1^y\times\ldots\times B_N^y}\mathsf P[\mathrm{vis}(0,x_1,\ldots,x_N,x)\cap\mathrm{vis}(0,y_1,\ldots,y_N,y)]\\ 
&&\qquad\qquad\qquad\qquad\qquad\qquad\qquad\qquad \sigma(dx)\sigma(dy)dx_1\ldots dx_Ndy_1\ldots dy_N\\ 
&\leq &C\mathsf E[X] \sup_{L\in\mathcal L^{re_1}}\int_{S'}\int_{B_1^x}\ldots\int_{B_N^x}\mathsf P[\mathrm{vis}(0,x_1,\ldots,x_N,x)\,|\,L\subseteq \mathcal V]\sigma(dx)dx_1\ldots dx_N.
\end{eqnarray*}
In order to apply Lemma~\ref{l:lower-bound-2}, we decompose the integral according to the value of $d_H(L^x,L)$, for $L^x = \bigcup_{i=1}^{N+1}[x_{i-1},x_i]$ ($x_0=0$, $x_{N+1}=x$) and $L = \bigcup_{i=1}^{N+1}[y_{i-1},y_i]$ ($y_0=0$, $y_{N+1}=re_1$). Note that if $d_H(L^x,L)=\delta$ then 
\begin{itemize}\itemsep4pt
\item
for all $1\leq i\leq N$, $x_i\in B(\ell^\infty_{y_{i-1},y_i},\delta)\cap B(\ell^\infty_{y_i,y_{i+1}},\delta)$,
\item
$x_{N+1}\in \partial B(r)\cap B(\ell^\infty_{y_N,y_{N+1}},\delta)$. 
\end{itemize}
By the definition of the domains $B^y_i$, there exist constants $c=c(d,N)$ and $C=C(d,N)$ such that the angles between the lines $\ell^\infty_{y_{i-1},y_i}$ and $\ell^\infty_{y_i,y_{i+1}}$, $1\leq i\leq N$, and between $\ell^\infty_{y_N,y_{N+1}}$ and $\ell^\infty_{0,y_{N+1}}$ all lie in $[ch_r/r,Ch_r/r]$. Thus, by Lemma~\ref{l:intersection-two-lines}, for $1\leq i\leq N$, $x_i$ lies in the cylinder of radius $\delta$ and length at most $C'\delta r/h_r$ centered at $y_i$ whose axis is aligned with $\ell^\infty_{y_{i-1},y_i}$, and $x_{N+1}\in\partial B(r)$ is within distance $C'\delta$ from $y_{N+1}$, for some constant $C' = C'(d,N)$. Thus, by decomposing the integral over the sets $\{(x, x_1,\ldots,x_N)\,: d_H(L^x,L)\in [k\delta_r, (k+1)\delta_r)\}$ and applying Lemma~\ref{l:lower-bound-2} and the above considerations, we obtain
\begin{multline*}
\mathsf E[X^2]\\
\leq C\mathsf E[X] \Big(
\sum_{k=0}^\infty e^{-ck} ((k+1)\delta_r)^{d-1}\,\big(((k+1)\delta_rr/h_r) ((k+1)\delta_r)^{d-1}\big)^N + Ce^{-c/\delta_r} r^{d-1}r^{dN}\Big)\\
\begin{aligned}
&= C\mathsf E[X]\Big(\sum_{k=0}^\infty e^{-ck} (k+1)^{(d-1)(N+1)+N}\,\delta_r^{(d-1)(N+1)}\beta_r^N + Ce^{-c/\delta_r} r^{d-1}r^{dN}\Big)\\
&\leq C \mathsf E[X]\delta_r^{(d-1)(N+1)}\beta_r^N.
\end{aligned}
\end{multline*}
Finally, by the Paley-Zygmund inequality, 
\[
\mathsf P(X>0) \geq \frac{\mathsf E[X]^2}{\mathsf E[X^2]} 
\geq 
c\frac{r^{d-1} (rh_r^{d-1})^N}{\delta_r^{(d-1)(N+1)}\beta_r^N} f(r) = c\Big(\frac{r}{\delta_r}\Big)^{d-1}\Big(\frac{h_r}{\delta_r}\Big)^{dN} f(r),
\]
as required. \qed

\subsection{Poisson-Boolean models}\label{sec:lower-bound-BM}
In this section, we prove Lemmas~\ref{l:lower-bound-1} and \ref{l:lower-bound-2} for the Poisson-Boolean models. 
\begin{proof}[Proof of Lemma~\ref{l:lower-bound-1}]
Let $L = \bigcup_{i=1}^{N+1}[x_{i-1},x_i]\in \mathcal L$. 
By \eqref{eq:BM-mu}, 
\[
\mathsf P^\alpha_\mathsf Q[L\subseteq \mathcal V] = \exp\big(-\alpha\mathsf E[\lambda_d(B(L,\rho))]\big)
\]
and 
\[
f(r) = \mathsf P^\alpha_\mathsf Q[[0,re_1]\subseteq \mathcal V] = 
\exp\big(-\alpha\mathsf E[\lambda_d(B([0,re_1],\rho))]\big).
\]
Thus, it suffices to prove that 
\[
\big|\mathsf E[\lambda_d(B(L,\rho))] - \mathsf E[\lambda_d(B([0,re_1],\rho))] \big| \leq C,
\]
for some $C=C(d,N,\mathsf Q)$. 
By the definition of $\mathcal L$ and $h_r$, 
\[
r\leq \sum_{i=1}^{N+1}\|x_{i-1}-x_i\| \leq r + C
\]
and 
\[
\lambda_d\big(B([x_{i-1},x_i],\rho)\cap B([x_i,x_{i+1}],\rho)\big) \leq C\rho^d, \quad 1\leq i\leq N,
\]
for some $C=C(d,N)$. 
Hence, the result follows from Lemma~\ref{l:volume-cylinder}.
\end{proof}

\begin{proof}[Proof of Lemma~\ref{l:lower-bound-2}]
Let $\epsilon=\frac15$. (The choice of $\epsilon$ does not matter in this proof.) Let $L= \bigcup_{i=1}^{N+1}[x_{i-1},x_i]$ and $L' = \bigcup_{i=1}^{N+1}[y_{i-1},y_i]$ be as in the lemma. 
By Lemma~\ref{l:lower-bound-1}, 
\[
C^{-1}\mathsf P^\alpha_\mathsf Q\big[L\subseteq \mathcal V\,|\,L'\subseteq \mathcal V\big]\leq  \mathsf P^\alpha_\mathsf Q\big[L'\subseteq \mathcal V\,|\,L\subseteq \mathcal V\big]\leq C\mathsf P^\alpha_\mathsf Q\big[L\subseteq \mathcal V\,|\,L'\subseteq \mathcal V\big].
\]
Therefore, without loss of generality, we may assume that 
$d_H(L,L') = d(y_i,\ell^\infty_{x_{i-1},x_i})$ or $d_H(L,L') = d(y_i,\ell^\infty_{x_i,x_{i+1}})$ for some $i$. We only consider the first case, the second is treated similarly. 
Fix $i_0$ such that $d_H(L,L') = d(y_{i_0},\ell^\infty_{x_{i_0-1},x_{i_0}})$.
By \eqref{eq:BM-mu}, 
\[
\mathsf P^\alpha_\mathsf Q\big[L'\subseteq \mathcal V\,|\,L\subseteq \mathcal V\big] = \exp\Big(-\alpha\big(\mathsf E\big[\lambda_d(B(L\cup L',\rho))\big] - \mathsf E\big[\lambda_d(B(L,\rho))\big]\big) \Big).
\]
Thus, it suffices to prove that for all $r\geq r_0 = r_0(d,N,\mathsf Q)$,
\[
\mathsf E\big[\lambda_d(B(L\cup L',\rho))\big] - \mathsf E\big[\lambda_d(B(L,\rho))\big] \geq \frac{c}{\delta_r}\min\big(d_H(L,L'), 1\big).
\]
Let $0<K_1<K_2$ be such that $\mathsf P[K_1\leq \rho\leq K_2]>0$.  Then 
\begin{eqnarray*}
\mathsf E\big[\lambda_d(B(L\cup L',\rho))\big] - \mathsf E\big[\lambda_d(B(L,\rho))\big]
&=
&\mathsf E\big[\lambda_d\big(B(L',\rho)\setminus B(L,\rho)\big)\big]\\
&\geq 
&\mathsf E\big[\lambda_d\big(B(L',\rho)\setminus B(L,\rho)\big);\,K_1\leq \rho\leq K_2\big].
\end{eqnarray*}
Let
\[
y = \frac{3y_{i_0-1} + 5y_{i_0}}{8}\quad \text{and} \quad 
y' = \frac{y_{i_0-1} + 7y_{i_0}}{8}.
\]
Note that for $\rho\leq K_2$ and all $r\geq r_0 = r_0(d,N,K_2)$, 
\[
B([y,y'],\rho)\cap B(L,\rho) = B([y,y'],\rho)\cap B([x_{i_0-1},x_{i_0}],\rho). 
\]
Thus, by Lemma~\ref{l:volume-difference}, 
\begin{eqnarray*}
\lambda_d(B(L',\rho)\setminus B(L,\rho))
&\geq &\lambda_d\big(B([y,y'],\rho)\setminus B([x_{i_0-1},x_{i_0}],\rho)\big)\\
&\geq &\lambda_d\big(B([y,y'],\rho)\setminus B(\ell^\infty_{x_{i_0-1},x_{i_0}},\rho)\big)\\
&\geq &c\rho^{d-2}\min(\rho,1)\|y-y'\|\min\big( d(y', \ell^\infty_{x_{i_0-1},x_{i_0}}), 1\big),
\end{eqnarray*}
for some $c=c(d)>0$. 

Since $d(y', \ell^\infty_{x_{i_0-1},x_{i_0}}) \geq \frac34d(y_{i_0},\ell^\infty_{x_{i_0-1},x_{i_0}}) = \frac34 d_H(L,L')$, $\|y-y'\|\geq \frac{r}{5(N+1)}$, and $\delta_r=\frac1r$, 
\[
\mathsf E\big[\lambda_d\big(B(L',\rho)\setminus B(L,\rho)\big)\big]\geq
c\min(K_1,1)^{d-1}\mathsf P[K_1\leq \rho\leq K_2]\frac{1}{\delta_r}\min\big(d_H(L,L'),1\big),
\]
for all $r\geq r_0 = r_0(d,N,K_2)$. The proof is completed. 
\end{proof}

\subsection{Poisson cylinders}\label{sec:lower-bound-PC}
In this section, we prove Lemmas~\ref{l:lower-bound-1} and \ref{l:lower-bound-2} for the Poisson cylinders. 
Recall that $K_\phi$ denotes the rotation of $K$ by $\phi$ and $\pi$ denotes the orthogonal projection on the hyperplane $H=\{z\in\R^d\,:\,z(1)=0\}$.

\begin{proof}[Proof of Lemma~\ref{l:lower-bound-1}]
Let $L = \bigcup_{i=1}^{N+1}[x_{i-1},x_i]\in \mathcal L$ and $L^0 = [0,x_{N+1}]$. 
By \eqref{eq:PC-mu-rho}, 
\[
\mathsf P^\alpha_\rho[L\subseteq \mathcal V] = \exp\Big(-\alpha
\int\limits_{SO_d}\lambda_{d-1}\big(B(\pi(L_\phi),\rho)\cap H\big)\,\nu(d\phi)\Big)
\]
and 
\[
f(r) = \mathsf P^\alpha_\rho[L^0\subseteq \mathcal V] = 
\exp\Big(-\alpha
\int\limits_{SO_d}\lambda_{d-1}\big(B(\pi(L^0_\phi),\rho)\cap H\big)\,\nu(d\phi)\Big).
\]
Hence it suffices to prove that
\[
\int\limits_{SO_d}\Big|\lambda_{d-1}\big(B(\pi(L_\phi),\rho)\cap H\big) - \lambda_{d-1}\big(B(\pi(L^0_\phi),\rho)\cap H\big)
\Big|\,\nu(d\phi)\leq C.
\]
Note that for any $\phi\in SO_d$, 
\[
\pi(L_\phi) = \bigcup_{i=1}^{N+1}[\pi(\phi(x_{i-1})),\pi(\phi(x_i))],\quad
\pi(L^0_\phi) = [0,\pi(\phi(x_{N+1}))],
\]
and 
\[
\max_{1\leq i\leq N} d\big(\pi(\phi(x_i)),\ell^\infty_{0,\pi(\phi(x_{N+1}))}\big)\leq 3^{N+1}h_r. 
\]
If $\phi\in SO_d$ satisfies $\|\pi(\phi(x_{N+1}))\|\geq h_r$, then by the definition of $\mathcal L$, 
\[
\|\pi(\phi(x_{N+1}))\|\leq \sum_{i=1}^{N+1}\|\pi(\phi(x_{i-1})) - \pi(\phi(x_i))\| \leq \|\pi(\phi(x_{N+1}))\| + \frac{Ch_r^2}{\|\pi(\phi(x_{N+1}))\|}
\]
and 
\[
\lambda_{d-1}\big(B([\pi(\phi(x_{i-1})),\pi(\phi(x_i))],\rho)\cap B([\pi(\phi(x_i)),\pi(\phi(x_{i+1}))],\rho)\big) \leq C\rho^{d-1}, \quad 1\leq i\leq N,
\]
for some $C=C(d,N)$. Hence, by Lemma~\ref{l:volume-cylinder}, 
\[
\big|\lambda_{d-1}\big(B(\pi(L_\phi),\rho)\cap H\big)
- \lambda_{d-1}\big(B(\pi(L^0_\phi),\rho)\cap H\big)\big|\leq \frac{Ch_r^2}{\|\pi(\phi(x_{N+1}))\|} = \frac{Cr}{\|\pi(\phi(x_{N+1}))\|},
\]
for some $C=C(d,N,\rho)$. 
If $\phi\in SO_d$ satisfies $\|\pi(\phi(x_{N+1}))\|\leq h_r$, then by Lemma~\ref{l:volume-cylinder},  
\[
\big|\lambda_{d-1}\big(B(\pi(L_\phi),\rho)\cap H\big)
- \lambda_{d-1}\big(B(\pi(L^0_\phi),\rho)\cap H\big)\big|\leq C h_r.
\]
Let 
\begin{eqnarray*}
\Phi_0 &= &\{\phi\in SO_d\,:\,\|\pi(\phi(x_{N+1}))\|\leq h_r\},\\ \Phi_k &= &\{\phi\in SO_d\,:\,\|\pi(\phi(x_{N+1}))\|\in [2^{k-1} h_r, 2^k h_r)\},\,\, k\geq 1.
\end{eqnarray*}
Note that 
$\nu(\Phi_k)\leq C(2^kh_r)^{d-1}/r^{d-1}$. 
Hence, if $k_* = \min\{k\,:\, 2^k h_r>r\}$, 
\begin{eqnarray*}
\Big|\lambda_{d-1}\big(B(\pi(L_\phi),\rho)\cap H\big) - \lambda_{d-1}\big(B(\pi(L^0_\phi),\rho)\cap H\big)
\Big|
&\leq &\frac{Ch_r h_r^{d-1}}{r^{d-1}}
+ \sum_{k=1}^{k_*}\frac{Cr}{2^{k-1}h_r}\frac{(2^kh_r)^{d-1}}{r^{d-1}}\\
&\leq &C\frac{r^{d/2}}{r^{d-1}} + C\frac{(2^{k_*}h_r)^{d-2}}{r^{d-2}}\leq C, 
\end{eqnarray*}
since $d\geq 3$. The proof is completed. 
\end{proof}

\begin{proof}[Proof of Lemma~\ref{l:lower-bound-2}]
Let $\epsilon = (100d(N+1))^{-1}$ and consider $L= \bigcup_{i=1}^{N+1}[x_{i-1},x_i]$ and $L' = \bigcup_{i=1}^{N+1}[y_{i-1},y_i]$ as in the lemma. 
By Lemma~\ref{l:lower-bound-1}, 
\[
C^{-1}\mathsf P^\alpha_\rho\big[L\subseteq \mathcal V\,|\,L'\subseteq \mathcal V\big]\leq  \mathsf P^\alpha_\rho\big[L'\subseteq \mathcal V\,|\,L\subseteq \mathcal V\big]\leq C\mathsf P^\alpha_\rho\big[L\subseteq \mathcal V\,|\,L'\subseteq \mathcal V\big].
\]
Therefore, without loss of generality, we may assume that 
$d_H(L,L') = d(y_i,\ell^\infty_{x_{i-1},x_i})$ or $d_H(L,L') = d(y_i,\ell^\infty_{x_i,x_{i+1}})$ for some $i$. We only consider the first case, the second is treated similarly. 
Fix $i_0$ such that $d_H(L,L') = d(y_{i_0},\ell^\infty_{x_{i_0-1},x_{i_0}})$.
By \eqref{eq:PC-mu-rho}, 
\[
\mathsf P^\alpha_\rho\big[L'\subseteq \mathcal V\,|\,L\subseteq \mathcal V\big] = \exp\Big(-\alpha\Big[
\lambda_{d-1}\big(B(\pi(L_\phi\cup L_\phi'),\rho)\cap H\big) - \lambda_{d-1}\big(B(\pi(L_\phi),\rho)\cap H\big)\Big]
\Big).
\]
Thus, it suffices to prove that 
\[
\lambda_{d-1}\big(B(\pi(L_\phi\cup L_\phi'),\rho)\cap H\big) - \lambda_{d-1}\big(B(\pi(L_\phi),\rho)\cap H\big) \geq \frac{c}{\delta_r}\min\big(d_H(L,L'), 1\big).
\]
Let
\[
y = \frac{3y_{i_0-1} + 5y_{i_0}}{8}\quad \text{and} \quad 
y' = \frac{y_{i_0-1} + 7y_{i_0}}{8}
\]
and note that $d(y',\ell^\infty_{x_{i_0-1},x_{i_0}})\geq \frac34 d(y_{i_0},\ell^\infty_{x_{i_0-1},x_{i_0}}) = \frac34d_H(L,L')$.
Thus, it suffices to prove that 
\[
\lambda_{d-1}\big(B(\pi(L_\phi\cup L_\phi'),\rho)\cap H\big) - \lambda_{d-1}\big(B(\pi(L_\phi),\rho)\cap H\big) \geq \frac{c}{\delta_r}\min\big(d(y',\ell^\infty_{x_{i_0-1},x_{i_0}}), 1\big).
\]

Let $H_s$ be the hyperplane $\{z \in\R^d\,:\,z(s)=0\}$ and $\pi_s$ the orthogonal projection on $H_s$. For $K\subseteq H_s$, denote by $B_s(K,\rho)$ the $(d-1)$-dimensional $\rho$-neighborhood of $K$ in $H_s$, $B_s(K,\rho) = B(K,\rho)\cap H_s$.
By the invariance of the Haar measure, we have 
\begin{multline*}
\lambda_{d-1}\big(B(\pi(L_\phi\cup L_\phi'),\rho)\cap H\big) - \lambda_{d-1}\big(B(\pi(L_\phi),\rho)\cap H\big)\\
\begin{aligned}
&=\,\, \int\limits_{SO_d} \lambda_{d-1}\big(B_1(\pi(L_\phi'),\rho)\setminus B_1(\pi(L_\phi),\rho)\big) \nu(d\phi)\\
&=\,\, \frac1d\sum_{s=1}^d \int\limits_{SO_d} \lambda_{d-1}\big(B_s(\pi_s(L_\phi'),\rho)\setminus B_s(\pi_s(L_\phi),\rho)\big) \nu(d\phi).
\end{aligned}
\end{multline*}
Fix $\phi\in SO_d$. 
Note that 
\[
\pi_s(L_\phi) = \bigcup_{i=1}^{N+1}[\pi_s(\phi(x_{i-1})),\pi_s(\phi(x_i))], \quad
\pi_s(L_\phi') = \bigcup_{i=1}^{N+1}[\pi_s(\phi(y_{i-1})),\pi_s(\phi(y_i))].
\]

By Lemma~\ref{l:vector-projections} applied to the vectors 
$u = x_{i_0} - x_{i_0-1}$ and $v=y' - x_{i_0-1}$, 
there exists $s_0$ such that 
\[
\|\pi_{s_0}(\phi(u))\|\geq \frac1{\sqrt d}\|u\|\quad\text{and}\quad
d\big(\pi_{s_0}(\phi(y')), \ell^\infty_{\pi_{s_0}(\phi(x_{i_0-1})),\pi_{s_0}(\phi(x_{i_0}))}\big)\geq \frac{1}{d} d(y', \ell^\infty_{x_{i_0-1},x_{i_0}}).
\]
From the first inequality and the facts that $\|x_{i_0-1}-x_{i_0}\|\geq cr$ and the angle between $\ell^\infty_{x_{i_0-1},x_{i_0}}$ and $\ell^\infty_{0,x_{N+1}}$ is at most $Ch_r/r$, it follows that 
\[
\|\pi_{s_0}(\phi(x_{N+1}))\|\geq \frac1{\sqrt d}r - Ch_r\geq \frac1{2\sqrt{d}}r.
\]
Furthermore, $\|\pi_{s_0}(\phi(x_{N+1})) - \pi_{s_0}(\phi(y_{N+1}))\|\leq \epsilon r$. Hence, by the definition of $\mathcal L$ and the choice of $s_0$ and $\epsilon$, 
\begin{multline*}
B([\pi_{s_0}(\phi(y)),\pi_{s_0}(\phi(y'))],\rho)\cap B(\pi_{s_0}(L_\phi),\rho)\\ 
= B([\pi_{s_0}(\phi(y)),\pi_{s_0}(\phi(y'))],\rho)\cap B([\pi_{s_0}(\phi(x_{i_0-1})),\pi_{s_0}(\phi(x_{i_0}))],\rho).
\end{multline*}
Therefore, 
\begin{multline*}
\lambda_{d-1}\big(B(\pi(L_\phi\cup L_\phi'),\rho)\cap H\big) - \lambda_{d-1}\big(B(\pi(L_\phi),\rho)\cap H\big)\\
\begin{aligned}
&\geq \frac1d \int\limits_{SO_d} \lambda_{d-1}\big(B_{s_0}([\pi_{s_0}(\phi(y)),\pi_{s_0}(\phi(y'))],\rho)\setminus B_{s_0}([\pi_{s_0}(\phi(x_{i_0-1})),\pi_{s_0}(\phi(x_{i_0}))],\rho)\big) \nu(d\phi)\\
&\geq \frac1d \int\limits_{SO_d} \lambda_{d-1}\big(B_{s_0}([\pi_{s_0}(\phi(y)),\pi_{s_0}(\phi(y'))],\rho)\setminus B_{s_0}(\ell^{\infty}_{\pi_{s_0}(\phi(x_{i_0-1})),\pi_{s_0}(\phi(x_{i_0}))},\rho)\big) \nu(d\phi).
\end{aligned}
\end{multline*}
By Lemma~\ref{l:volume-difference}, 
\begin{multline*}
\lambda_{d-1}\big((B([\pi_{s_0}(\phi(y)),\pi_{s_0}(\phi(y'))],\rho)\setminus B(\ell^{\infty}_{\pi_{s_0}(\phi(x_{i_0-1})),\pi_{s_0}(\phi(x_{i_0}))},\rho))\cap H_{s_0}\big)\\
\geq c\|\pi_{s_0}(\phi(y))-\pi_{s_0}(\phi(y'))\|\,\min\big(d\big(\pi_{s_0}(\phi(y')),\ell^\infty_{\pi_{s_0}(\phi(x_{i_0-1})),\pi_{s_0}(\phi(x_{i_0}))}\big), 1\big).
\end{multline*}
By the choice of $s_0$, the minimim on the right hand side is bounded from below by $\frac1{d}\min\big(d(y', \ell^\infty_{x_{i_0-1},x_{i_0}}),1\big)$. 
Furthermore, by the definition of $\mathcal L$ and the choice of $s_0$ and $\epsilon$, $\|\pi_{s_0}(\phi(y))-\pi_{s_0}(\phi(y'))\|\geq cr$. The result follows, since $\delta_r=\frac1r$. 
\end{proof}

\subsection{Brownian interlacements}\label{sec:lower-bound-BI}
In this section, we prove Lemmas~\ref{l:lower-bound-1} and \ref{l:lower-bound-2} for the Brownian interlacements. 
\begin{proof}[Proof of Lemma~\ref{l:lower-bound-1}]
Let $L = \bigcup_{i=1}^{N+1}[x_{i-1},x_i]\in \mathcal L$ with $x_0=0$ and $x_{N+1}=re_1$. By \eqref{eq:BI-capacity-rho}, 
\[
\mathsf P^\alpha_\rho[L\subseteq \mathcal V] = \exp\big(-\alpha\,\mathrm{cap}(B(L,\rho))\big)
\]
and 
\[
f(r) = \mathsf P^\alpha_\rho[[0,re_1]\subseteq \mathcal V] = 
\exp\big(-\alpha\,\mathrm{cap}(B([0,re_1],\rho))\big).
\]
Thus, it suffices to prove that 
\[
\big|\mathrm{cap}(B(L,\rho)) - \mathrm{cap}(B([0,re_1],\rho)) \big| \leq C,
\]
for some $C=C(d,N,\rho)$. By Lemma~\ref{l:upper-bound-3},  
$\mathrm{cap}(B(L,\rho)) \geq \mathrm{cap}(B([0,re_1],\rho)) - C$. Hence, it remains to prove that 
\[
\mathrm{cap}(B(L,\rho)) \leq \mathrm{cap}(B([0,re_1],\rho)) + C.
\]
We use a similar, but simpler, argument as in the proof of Lemma~\ref{l:upper-bound-3}. 
Let $L_i = [x_{i-1},x_i]$. By the definition of $\mathcal L$ and $h_r$, there exists $C=C(d,N)$ such that 
\begin{itemize}\itemsep4pt
\item
$\sum_{i=1}^{N+1}|L_i|\leq r + C(1+\log r\mathds{1}_{d=3})$, 
\item
$B(L_i,\rho)\cap B(L_{i+1},\rho)\subseteq B(x_i,C\rho)$ for all $1\leq i\leq N$. 
\end{itemize}
Thus, we can choose $\widetilde L_i = [u_i,v_i]\subseteq L_i$ such that $u_0=0$, $v_{N+1} = re_1$, and for all $i$, 
\begin{itemize}\itemsep4pt
\item
$B(\widetilde L_i,\rho)\cap B(\widetilde L_{i+1},\rho) = \emptyset$, 
\item
$\|v_i - u_{i+1}\|\leq D$ for some $D=D(d,N,\rho)$. 
\end{itemize}
Let $R= \sum_{i=1}^{N+1}|\widetilde L_i| + (D+4\rho)N$. Note that $R\leq r + (D+4\rho)N + C(1+\log r\mathds{1}_{d=3})$. 
By the subadditivity of capacity and Lemma~\ref{l:capacity-cylinder-d3}, for some $C=C(d,N,\rho)$, 
\[
\mathrm{cap}(B(L,\rho))\leq \mathrm{cap}\big(\bigcup_{i=1}^{N+1}B(\widetilde L_i,\rho)\big) + C
\]
and
\[
\mathrm{cap}(B([0,Re_1],\rho))\leq \mathrm{cap}(B([0,re_1],\rho)) + C.
\]
Thus, it suffices to prove that 
\[
\mathrm{cap}\big(\bigcup_{i=1}^{N+1}B(\widetilde L_i,\rho)\big)\leq \mathrm{cap}(B([0,Re_1],\rho)).
\]
Let 
\[
a_i =\Big(\sum_{j=1}^{i-1}|\widetilde L_j| + (D+4\rho)(i-1),0,\ldots,0\Big) \quad 
b_i = a_i + |\widetilde L_i|e_1,\quad 1\leq i\leq N+1,
\]
and define $A_i = B([a_i,b_i],\rho)$. Note that $A_i\subseteq B([0,Re_1],\rho)$ and $A_i\cap A_j=\emptyset$ for all $i\neq j$. 
Consider the function $g: \bigcup_{i=1}^{N+1}B(\widetilde L_i,\rho)\to \bigcup_{i=1}^{N+1}A_i$ such that the restriction of $g$ to $B(\widetilde L_i,\rho)$ is a rotation satisfying $g(u_i)= a_i$ and $g(v_i) = b_i$; in particular, $g(B(\widetilde L_i,\rho)) = A_i$ for all $i$. Note that 
\begin{itemize}\itemsep4pt
\item
for $x,y\in B(\widetilde L_i,\rho)$, $\|g(x)-g(y)\|=\|x-y\|$, 
\item
for $x\in \widetilde L_i$ and $y\in \widetilde L_j$ with $i<j$, 
\begin{eqnarray*}
\|x-y\|&\leq &\|x-v_i\| + \sum_{k=i+1}^{j-1} \|u_k - v_k\| + \|u_j - y\| + D(j-i)\\ 
&= &\|g(x) - b_i\| + \sum_{k=i+1}^{j-1} \|a_k - b_k\| + \|a_j - g(y) \| + D(j-i)\\ 
&= &\|g(x)-g(y)\| - 4\rho(j-i),
\end{eqnarray*}
\item
for $x\in B(\widetilde L_i,\rho)$ and $y\in B(\widetilde L_j,\rho)$ with $i<j$, by writing $x'$ for the closest point to $x$ in $\widetilde L_i$ and $y'$ for the closest point to $y$ in $\widetilde L_j$, 
\[
\|x-y\|\leq \|x'-y'\| + 2\rho\leq \|g(x')-g(y')\| - 2\rho \leq \|g(x) - g(y)\|.
\]
\end{itemize}
All in all, $g$ is a dilation. Therefore, using the variational definition of the capacity in the same way as in the proof of Lemma~\ref{l:upper-bound-3}, we obtain that 
\[
\mathrm{cap}\big(\bigcup_{i=1}^{N+1}B(\widetilde L_i,\rho)\big)\leq 
\mathrm{cap}\big(g\big(\bigcup_{i=1}^{N+1}B(\widetilde L_i,\rho)\big)\big) = \mathrm{cap}\big(\bigcup_{i=1}^{N+1} A_i\big) \leq \mathrm{cap}(B([0,Re_1],\rho)).
\]
The proof is completed. 
\end{proof}

\begin{proof}[Proof of Lemma~\ref{l:lower-bound-2}]
Let $\epsilon=\epsilon(d)>0$ be small enough, to be specified later. Let $L= \bigcup_{i=1}^{N+1}[x_{i-1},x_i]$ and $L' = \bigcup_{i=1}^{N+1}[y_{i-1},y_i]$ be as in the lemma. 
By Lemma~\ref{l:lower-bound-1}, 
\[
C^{-1}\mathsf P^\alpha_\rho\big[L\subseteq \mathcal V\,|\,L'\subseteq \mathcal V\big]\leq  \mathsf P^\alpha_\rho\big[L'\subseteq \mathcal V\,|\,L\subseteq \mathcal V\big]\leq C\mathsf P^\alpha_\rho\big[L\subseteq \mathcal V\,|\,L'\subseteq \mathcal V\big].
\]
Therefore, without loss of generality, we may assume that 
$d_H(L,L') = d(y_i,\ell^\infty_{x_{i-1},x_i})$ or $d_H(L,L') = d(y_i,\ell^\infty_{x_i,x_{i+1}})$ for some $i$. We only consider the first case, the second is treated similarly. 
Fix $i_0$ such that $d_H(L,L') = d(y_{i_0},\ell^\infty_{x_{i_0-1},x_{i_0}})$.
By definition, 
\begin{eqnarray*}
\mathsf P^\alpha_\rho\big[L'\subseteq \mathcal V\,|\,L\subseteq \mathcal V\big] &= &\exp\Big(-\alpha\big(\mathrm{cap}(B(L\cup L',\rho))- \mathrm{cap}(B(L,\rho))\big) \Big),\\
\mathsf P^\alpha_\rho\big[L\subseteq \mathcal V\,|\,L'\subseteq \mathcal V\big] &= &\exp\Big(-\alpha\big(\mathrm{cap}(B(L\cup L',\rho))- \mathrm{cap}(B(L',\rho))\big) \Big).
\end{eqnarray*}
Thus, it suffices to prove that 
\[
\mathrm{cap}(B(L\cup L',\rho))- \min\big(\mathrm{cap}(B(L,\rho)),\mathrm{cap}(B(L',\rho))\big) \geq \frac{c}{\delta_r}\min\big(d_H(L,L'), 1\big).
\]
We will use the following observation, which is immediate from the definition of the capacity \eqref{def:equilibrium-measure-and-capacity} and the strong Markov property: For any compact sets $A, B$ and $S\subseteq B\setminus A$, for all $R$ large enough, 
\[
\mathrm{cap}(A\cup B) - \mathrm{cap}(A) 
= \mathsf P_{\mu_R}[H_B<\infty, H_A=\infty]
\geq \mathsf P_{\mu_R}[H_S<H_A]\,\inf_{z\in S}\mathsf P_z[H_A=\infty].
\]
In the following, we will identify $S$ as a subset of $B(L',\rho)\setminus B(L,\rho)$ or of $B(L,\rho)\setminus B(L',\rho)$. 
Let
\[
x = \frac{3x_{i_0-1} + 5x_{i_0}}{8},\,\,
x' = \frac{x_{i_0-1} + 7x_{i_0}}{8},\,\,
y = \frac{3y_{i_0-1} + 5y_{i_0}}{8},\,\,
y' = \frac{y_{i_0-1} + 7y_{i_0}}{8}
\]
and note that $d([x,x'],[y,y'])\geq \frac14d_H(L,L')$.
Let 
\[
\varphi(r) = \left\{\begin{array}{ll} \frac{r}{\log r} & d=3\\ r & d\geq 4\end{array}\right.
\]
We claim that for all $R$ large enough, either 
\begin{equation}\label{eq:BI-lower-bound-hitting}
\mathsf P_{\mu_R}\big[H_{B([x,x'],2\rho)}<H_{B(L',2\rho)}\big]\geq c\varphi(r)\,\,\,\text{or}\,\,\,
\mathsf P_{\mu_R}\big[H_{B([y,y'],2\rho)}<H_{B(L,2\rho)}\big]\geq c\varphi(r).
\end{equation}
By the assumption of the lemma, $B(L,2\rho)\cup B(L',2\rho)\subseteq B([0,re_1],2\epsilon r)$. Let 
\begin{eqnarray*}
U &= &\Big\{z\in \partial B([0,re_1],2\epsilon r)\,:\,\big|z(1)-\frac{i_0-1/4}{N+1}r\big|\leq \epsilon r\Big\},\\
A &= &\Big\{z\in B([x,x'],2\rho)\cup B([y,y'],2\rho)\,:\,\big|z(1)-\frac{i_0-1/4}{N+1}r\big|\leq \epsilon r\Big\},\\
B &= &\big(B(L,2\rho)\cup B(L',2\rho)\big)\setminus \big(B([x,x'],2\rho)\cup B([y,y'],2\rho)\big).
\end{eqnarray*}
For all $R$ large enough, 
\begin{multline*}
\mathsf P_{\mu_R} \big[H_{B([x,x'],2\rho)\cup B([y,y'],2\rho)}<H_B\big]\\
\geq 
\mathsf P_{\mu_R} \big[H_U=H_{B([0,re_1],2\epsilon r)}<\infty\big]
\inf_{z\in U}\mathsf P_z\big[H_{B([x,x'],2\rho)\cup B([y,y'],2\rho)}<H_B\big].
\end{multline*}
By Lemma~\ref{l:equilibrium-measure-cylinder}, 
\begin{eqnarray*}
\mathsf P_{\mu_R} \big[H_U=H_{B([0,re_1],2\epsilon r)}<\infty\big]
&\geq &c\mathsf P_{\mu_R} \big[H_{B([0,re_1],2\epsilon r)}<\infty\big]\\
&= &c\,\mathrm{cap}\big(B([0,re_1],2\epsilon r)\big)
\geq cr^{d-2}. 
\end{eqnarray*}
By \eqref{eq:hitting-probability-capacity}, for each $z\in U$, 
\begin{eqnarray*}
\mathsf P_z\big[H_{B([x,x'],2\rho)\cup B([y,y'],2\rho)}<H_B\big]
&\geq &\mathsf P_z\big[H_A<H_B\big]
\geq \mathsf P_z\big[H_A<\infty\big] - \sup_{z'\in B}\mathsf P_{z'}\big[H_A<\infty\big]\\
&\geq &c (\epsilon r)^{2-d}\mathrm{cap}(A) - Cr^{2-d}\mathrm{cap}(A)
\stackrel{(*)}\geq c r^{2-d}\mathrm{cap}(A)\\
&\geq &cr^{2-d}\varphi(r),
\end{eqnarray*}
where ($*$) holds if $\epsilon=\epsilon(d)>0$ is small enough, and the last inequality follows from \eqref{eq:capacity-asymptotics}.
Since the boundaries of $B([x,x'],2\rho)$ and $B([y,y'],2\rho)$ intersect on a set of measure $0$, it follows that either 
\[
\mathsf P_{\mu_R}\big[H_{B([x,x'],2\rho)}<H_{B\cup B([y,y'],2\rho)}\big]
\geq \tfrac12 c\varphi(r)\,\,\,\text{or}\,\,\,
\mathsf P_{\mu_R}\big[H_{B([y,y'],2\rho)}<H_{B\cup B([x,x'],2\rho)}\big]
\geq \tfrac12 c\varphi(r).
\]
Since $B(L,2\rho)\subseteq B\cup B([x,x'],2\rho)$ and $B(L',2\rho)\subseteq B\cup B([y,y'],2\rho)$, \eqref{eq:BI-lower-bound-hitting} follows. 

\smallskip

Let
\begin{eqnarray*}
S &= &\big\{z\in B([x,x'],\rho)\,:\, d(z, B(L',\rho))\geq \tfrac12d([x,x'],[y,y'])\big\},\\
S' &= &\big\{z\in B([y,y'],\rho)\,:\, d(z, B(L,\rho))\geq \tfrac12d([x,x'],[y,y'])\big\}.
\end{eqnarray*}
Note that there exists $c=c(d,\rho)>0$ such that 
\[
\inf_{z\in \partial B([x,x'],2\rho)\setminus B(L',2\rho)} \mathsf P_z\big[H_S<H_{B(L',\rho)}\big],\,\,
\inf_{z\in \partial B([y,y'],2\rho)\setminus B(L,2\rho)} \mathsf P_z\big[H_{S'}<H_{B(L,\rho)}\big] \geq c.
\]
Thus, either
\[
\mathsf P_{\mu_R}\big[H_S<H_{B(L',\rho)}\big] \geq c\varphi(r)\,\,\,\text{or}\,\,\,
\mathsf P_{\mu_R}\big[H_{S'}<H_{B(L,\rho)}\big] \geq c\varphi(r).
\]
Let
\[
\psi(r) = \left\{\begin{array}{ll} \frac{1}{\log r} & d=3\\ 1 & d\geq 4\end{array}\right.
\]
and note that $\varphi(r)\psi(r) = \frac1{\delta_r}$. Thus, in order to complete the proof of the lemma, it suffices to show that 
\begin{equation}\label{eq:escape-probability-cylinder}
\inf_{z\in S}\mathsf P_z\big[H_{B(L',\rho)}=\infty\big],\,\, \inf_{z\in S'}\mathsf P_z\big[H_{B(L,\rho)}=\infty\big] \geq c\psi(r)\min\big(d_H(L,L'),1\big).
\end{equation}
We only prove the first inequality, the second one follows by symmetry.

\smallskip

Let $T_1$ be the first exit time of the Brownian motion from the infinite cylinder $B(\ell_{y,y'}^\infty,r^{4/5})$ and $T_2$ the first time the Brownian motion is at distance $r/10(N+1)$ from $S$. Let $G = \{v\in \partial B(\ell_{y,y'}^\infty,r^{4/5})\,:\,d(v,S)\leq r/10(N+1)\}$. Since $d(S,B([y_{i-1},y_i],\rho))>r/10(N+1)$ for all $i\neq i_0$, we obtain that for each $z\in S$, 
\begin{equation}\label{eq:escape-probability-cylinder-1}
\mathsf P_z[H_{B(L',\rho)} = \infty]\geq \mathsf P_z\big[T_1< H_{B(\ell_{y,y'}^\infty,\rho)}\wedge T_2\big]\,\inf_{v\in G}\mathsf P_v[H_{B(L',\rho)} = \infty].
\end{equation}
By \eqref{eq:BM-hitting} and the fact that the projection of the Brownian motion on the hyperplane orthogonal to $\ell_{y,y'}^\infty$ is a $(d-1)$-dimensional Brownian motion, 
\[
\mathsf P_z[T_1<H_{B(\ell_{y,y'}^\infty,\rho)}] \geq c\psi(r)\min\big(d(z,B(\ell_{y,y'}^\infty,\rho)),1\big).
\]
Furthermore, if $z'$ is the closest to $z$ point on $\ell_{y,y'}^\infty$, then 
\[
\mathsf P_z\big[T_2<H_{B(\ell_{y,y'}^\infty,\rho)}\wedge T_1\big]
\leq \mathsf P_z\big[H_{B(z',2\rho)^c}<H_{B(z',\rho)}\big]\,\sup_{z''\in B(S,4\rho)}\mathsf P_{z''}[T_2< T_1].
\]
By \eqref{eq:BM-hitting}, the first probability is at most $C\min\big(d(z,B(\ell_{y,y'}^\infty,\rho)),1\big)$. The second probability is bounded from above by $Ce^{-cr^{1/5}}$, since for any $v\in B(\ell_{y,y'}^\infty,r^{4/5})$, the Brownian motion started in $v$ will exit $B(\ell_{y,y'}^\infty,r^{4/5})$ before leaving the ball $B(v,4r^{4/5})$ with probability at least $c=c(d)>0$, implying that the Brownian motion leaves $B(S, r/10(N+1))$ before leaving $B(\ell_{y,y'}^\infty,r^{4/5})$ with probability at most $(1-c)^{c' r^{1/5}}$, for some $c' = c'(d,N)$. 
Thus, 
\[
\mathsf P_z\big[T_1< H_{B(\ell_{y,y'}^\infty,\rho)}\wedge T_2\big]
\geq c\psi(r)\min\big(d(z,B(\ell_{y,y'}^\infty,\rho)),1\big)\geq c\psi(r)\min\big(d_H(L,L'),1\big).
\]
Next, by the definition of $\mathcal L$ and $h_r$, $B(L',\rho)\subseteq B(\ell_{y,y'}^{\infty},r^{3/5})$; also $B(L',\rho)\subseteq B(y,2r)$. Thus, for any $v\in \partial B(\ell_{y,y'}^\infty, r^{4/5})$, 
\[
\mathsf P_v [H_{B(L',\rho)}=\infty] \geq \mathsf P_v\big[H_{\partial B(\ell_{y,y'}^\infty,3r)}<H_{B(\ell_{y,y'}^\infty,r^{3/5})}\big]\,\inf_{v'\in \partial B(\ell_{y,y'}^\infty,3r)}\mathsf P_{v'}[H_{B(y,2r)}=\infty].
\]
The first probability is at least $c=c(d)>0$ by \eqref{eq:BM-hitting} and the fact that the projection of the Brownian motion on the hyperplane orthogonal to $\ell_{y,y'}^\infty$ is a $(d-1)$-dimensional Brownian motion. The second probability is bounded from below by $c=c(d)>0$ by \eqref{eq:BM-escape}. 
Plugging the two bounds in \eqref{eq:escape-probability-cylinder-1} gives \eqref{eq:escape-probability-cylinder}. The proof of Lemma~\ref{l:lower-bound-2} is completed. 
\end{proof}

\section*{Acknowledgements}
The research of both authors has been supported by the DFG Priority Program 2265 ``Random Geometric Systems'' (Project number 443849139).

\end{document}